\definecolor{green}{RGB}{60,100,20}
\newtheorem{theorem}{Theorem}
\newcounter{other}
\numberwithin{other}{section}
\newtheorem{cor}[other]{Corollary}
\newtheorem{proposition}[other]{Proposition}
\newtheorem{lemma}[other]{Lemma}
\theoremstyle{definition}
\newtheorem{remark}[other]{Remark}
\newtheorem{definition}[other]{Definition}
\numberwithin{equation}{section}
\title{Dominating Sets in Bergman Spaces on Strongly Pseudoconvex Domains}
\author{A. Walton Green and Nathan A. Wagner\footnote{Supported by NSF GRF, grant number DGE-1745038}}
\thanks{MSC 2020: Primary 32A36, Secondary 47B35 \\ Keywords: Bergman space, strongly pseudoconvex, relative density, Kobayashi metric, Remez inequality}
\begin{document}

\maketitle

\begin{abstract}
We obtain local estimates, also called propagation of smallness or Remez-type inequalities, for analytic functions in several variables. Using Carleman estimates, we obtain a three sphere-type inequality, where the outer two spheres can be any sets satisfying a boundary separation property, and the inner sphere can be any set of positive Lebesgue measure. We apply this local result to characterize the dominating sets for Bergman spaces on strongly pseudoconvex domains in terms of a density condition or a testing condition on the reproducing kernels. Our methods also yield a sufficient condition for arbitrary domains and lower-dimensional sets.
\end{abstract}


\section{Introduction} 
Let $\mathcal F$ be a function space defined on a metric-measure space $(\Omega,d,\mu)$. Finding the so-called dominating sets for $\mathcal F$ is to find $E \subset \Omega$ such that functions in $\mathcal F$ can be continuously reconstructed from their values on $E$. In other words, the restriction map $f \mapsto f|_E$ is invertible.

Such questions have been considered by many authors, having complete solutions in many classical function spaces---see the survey \cite{fricain2015survey}. Recently, there has been renewed interest in uncertainty principle versions of this problem due to the applications in control theory. In such a case, $\mathcal F$ is defined by some sort of Fourier decay or support condition. We take a different perspective here and consider the Bergman spaces, defined below.

When the underlying metric measure space $(\Omega,d,\mu)$ of a function space $\F \subset L^p(\Omega, d\mu)$ is acted on transitively and invariantly by a group, many simplifications can be made. It is not too hard to check that the following \textit{relative density} conditions are equivalent and necessary conditions for $E$ to be a dominating set:
	\begin{equation}\label{eq:intro-dense} \inf_{z \in \Omega} \mu(E \cap B_z) > 0, \quad \mbox{and} \quad \inf_{z \in \Omega} \|\phi_z \bigr|_E\| > 0 \end{equation}
for some ball $B$ or some non-zero function $\phi$. $B_z$ or $\phi_z$ mean $B$ or $\phi$ translated, by the group action, to the point $z \in \Omega$.

This is the first novel feature of our paper. The problem of dominating sets in the homogeneous Bergman spaces (those whose domains have a transitive automorphism group) was completely solved by Luecking almost 40 years ago \cite{luecking1981inequalities,luecking1984closed}, showing that relative density (\ref{eq:intro-dense}) is also sufficient. In such a setting, one can use the Euclidean geometry (say far from the boundary) and then using the automorphisms of the domain, connect this to the invariant complex geometry. Such features also enter into the recent work of Hartmann et. al. in \cite{hartmann2021dominating}, where they refine Luecking's results on the disc and obtain the sharp form of the \textit{sampling constant}, polynomial in terms of the lower bound (\ref{eq:intro-dense}), which is related to the norm of the inverse operator $f\bigr|_E \mapsto f$.

Our main goal here is to extend both of these results to more general domains, with little to no automorphic structure. In general domains, there are many so-called invariant metrics, so it is not immediate what sort of measure-theoretic density condition is necessary. However, one which is both necessary and which acknowledges the invariant complex geometry can be given by the Berezin transform, which is defined in (\ref{eq:intro-berezin}) below.

 To streamline notation, unless otherwise specified, the letter $C$ will denote a constant that only depends on the domain $\Omega$ and can possibly change from line to line.

\subsection{Main Result}
To state our main result, let us introduce some definitions. We will use the notation $\Hol(X;Y)$ to denote the space of functions which are holomorphic on $X$, taking values in $Y$. When $Y = \C$, we use $\Hol(X)$.
For a domain $\Omega \subset \C^n$, $1 \le p \le \infty$, and $\alpha > -1$, define the power-weighted Bergman spaces 
	\[ A^p_\alpha(\Omega) = \{ f \in \Hol(\Omega) : \int_\Omega |f(z)|^p |\rho(z)|^\alpha dA(z)<\infty \}. \]
$\rho(z)$ is a defining function of the domain $\Omega$ which means $\Omega = \{ \rho < 0\}$. $dA$ denotes the volume element on $\C^n \equiv \R^{2n}$. It is straightforward to show that $A^p_{\alpha}(\Omega)$ is a Banach space and is a closed subspace of $L^p_{\alpha}(\Omega),$ which is the weighted $L^p$ space on $\Omega$ with weight $|\rho|^\alpha.$ Bergman spaces with radial weights have been extensively studied (see for example \cite{peloso1994}, \cite{cuc2018}, \cite{zhu2016}). 

$A^2_\alpha(\Omega)$ is a reproducing kernel Hilbert space. Let $K_\alpha(z,w)$ denote its reproducing kernel and $k_z^{p,\alpha}$ the $L^p_\alpha$ normalization at $z \in \Omega$:
$$k^{p,\alpha}_z(w)=\frac{K_\alpha(z,w)}{\|K_\alpha(z,\cdot)\|_{L^p_{\alpha}(\Omega)}}.$$
The following quantity will be of crucial importance in testing if a set $E$ is a dominating set,
	\begin{equation}\label{eq:intro-berezin} \tilde T_E^{p,\alpha}(z) = \|k_z^{p,\alpha}\|_{L^p_\alpha(E)}. \end{equation}
When $p=2$, this is the well-known \textit{Berezin transform} of the Toeplitz operator with symbol $1_E$. Broadly, our main result states that for $E$ to be a dominating set for $A^p_\alpha(\Omega)$, it is enough for $\tilde T_E^{p,\alpha}$ to not vanish on $\partial \Omega$.

\begin{theorem}\label{thm:main}
Let $\Omega$ be a smoothly bounded strongly pseudoconvex domain and $Y(w,r)$ be a ball in the Kobayashi metric on $\Omega$ of radius $\tanh^{-1}r$ centered at $w$ (see Definition \ref{def:pseudo}). Then, for any $E \subset \Omega$, the following are equivalent.
\begin{itemize}
	\item[(i)] For any  $1 \leq  p \leq \infty$ , $\alpha>-1$, there exists $C>0$ such that
	\[ \|f\|_{A^p_\alpha(\Omega)} \le C \|f\|_{L^p_\alpha(E)} \]
	\item[(ii)] $E$ is relatively dense, which means there exists $r>0$ such that
	\[ \inf_{w \in \Omega} \frac{|E \cap Y(w,r)|}{|Y(w,r)|} > 0. \]
	\item[(iii)] For some $1 < p < \infty$, and  non-negative integer  $\alpha$, 
	\[ \inf_{z \in \Omega} \tilde T_E^{p,\alpha}(z) > 0. \]
\end{itemize}
\end{theorem}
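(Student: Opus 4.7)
The plan is to establish the cycle of implications $(i) \Rightarrow (iii) \Rightarrow (ii) \Rightarrow (i)$, which is convenient since the intermediate geometric condition (ii) is independent of $p$ and $\alpha$ and thus bridges the asymmetry between the ``for all'' quantifier in (i) and the ``for some'' quantifier in (iii).

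The implication $(i) \Rightarrow (iii)$ is essentially immediate: testing the domination inequality against $f = k_z^{p,\alpha}$, whose $A^p_\alpha$-norm equals $1$ by construction, gives $1 \le C \|k_z^{p,\alpha}\|_{L^p_\alpha(E)} = C \tilde T_E^{p,\alpha}(z)$.

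For $(iii) \Rightarrow (ii)$, I would exploit the sharp off-diagonal decay of the Bergman kernel on strongly pseudoconvex domains in the Kobayashi distance (of the type established by McNeal or Nagel--Stein). These yield a radius $R$, depending on $\delta := \inf_z \tilde T_E^{p,\alpha}(z)$ but not on $z$, such that
\[ \|k_z^{p,\alpha}\|_{L^p_\alpha(\Omega \setminus Y(z,R))} \le \delta/2, \]
whence $\|k_z^{p,\alpha}\|_{L^p_\alpha(E \cap Y(z,R))} \ge \delta/2$. A matching pointwise upper bound for $k_z^{p,\alpha}$ on $Y(z,R)$, together with the uniform comparability of $|Y(z,R)|$ to a fixed quantity (depending on $R$), converts this $L^p_\alpha$ lower bound into a volume lower bound on $E \cap Y(z,R)$, yielding (ii).

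The main work lies in $(ii) \Rightarrow (i)$. Here I would apply the local three-sphere-type Remez inequality advertised in the abstract, which in our setting should take the form: for $E' \subset Y(w,r)$ with $|E'|/|Y(w,r)| \ge \eta$ and $f \in \Hol(Y(w,2r))$,
\[ \|f\|_{L^p(Y(w,r))} \le C(\eta) \|f\|_{L^p(E')}^\theta \|f\|_{L^p(Y(w,2r))}^{1-\theta}, \]
with $0<\theta<1$ and constants independent of $w$ (invariance under the Kobayashi geometry is what supplies this uniformity near $\partial \Omega$). I would then fix a countable cover $\{Y(w_j,r)\}$ of $\Omega$ with bounded overlap and such that $\{Y(w_j,2r)\}$ also has bounded overlap, apply the local inequality to $E_j' = E \cap Y(w_j,r)$ using (ii), raise to the $p$-th power, sum in $j$, and use H\"older's inequality in the counting variable to obtain
\[ \|f\|_{L^p_\alpha(\Omega)}^p \lesssim \|f\|_{L^p_\alpha(E)}^{p\theta} \|f\|_{L^p_\alpha(\Omega)}^{p(1-\theta)}, \]
from which (i) follows by dividing. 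The weight $|\rho|^\alpha$ is essentially constant on each Kobayashi ball, so the unweighted local inequality passes to the $L^p_\alpha$ setting with harmless constants; the $p = \infty$ endpoint and non-integer $\alpha$ in (i) are recovered precisely because the bridge through (ii) erases any $p,\alpha$-dependence.

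The principal obstacle is the Carleman-based local Remez inequality on Kobayashi balls in several variables: standard Euclidean Remez-type estimates interact poorly with the sharp anisotropy of Kobayashi balls near $\partial \Omega$, and the technical heart of the argument is matching the Carleman weight to the invariant complex geometry in a way that produces constants uniform in the base point $w$.
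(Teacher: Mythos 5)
Your proposal is correct and follows essentially the same route as the paper: (i)$\Rightarrow$(iii) by testing on normalized kernels, (iii)$\Rightarrow$(ii) via a vanishing off-diagonal (Rudin--Forelli-type) estimate paired with a pointwise bound on the kernel over a Kobayashi ball, and (ii)$\Rightarrow$(i) via the Carleman-based local Remez inequality patched together over a Kobayashi-ball cover with bounded overlap. The one bookkeeping difference worth noting: you sum the local three-sphere estimate in its multiplicative form and invoke H\"older in the counting index, whereas the paper converts the doubling-index form $\|f\|_{Y_k}\le(C/\gamma)^{C(N_k+1)}\|f\|_{E\cap Y_k}$ into a global bound via a good/bad-index dichotomy after first establishing (by a compactness/normal-families argument) that a boundary collar $\Omega_\ep$ already dominates; this last reduction is needed because the paper's Kobayashi-ball Remez lemma is only proved for centers in $\Omega_{\ep_0}$, so your cover would either need to be restricted to a collar (with that domination step supplied separately) or supplemented by Euclidean-ball Remez estimates in the compact interior. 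Both summing schemes yield the same polynomial $\gamma^{-q}$ dependence, and the $p=\infty$ endpoint, which your $\ell^p$-H\"older step doesn't reach, is handled in the paper by a separate one-line argument taking $z_0$ to nearly attain the supremum.
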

Furthermore, if the infimum in (ii) is at least $\gamma$, then the form of the constant in ``(ii) implies (i)'' is $C \gamma^{-q}$ for $C,q>0$ depending on $r,p,\alpha,\Omega$, but not on $E$. The same holds in ``(iii) implies (i)'' if we have a lower bound $\tilde T_E^{p,\alpha}(z) \ge \gamma$. We also mention that this sharp dependence (polynomial in $\gamma$) was recently obtained for $\Omega=\mathbb D$ in \cite{hartmann2021dominating}, with more information concerning the other parameters as well.

One application of this theorem is to the reproducing kernel hypothesis (RKH), which concerns the connection between an operator $T$ and its Berezin transform. For nonnegative function $\sigma$, let $T_\sigma: A^2_\alpha(\Omega) \to A^2_\alpha(\Omega)$ be the Toeplitz operator defined by
	\[ T_\sigma^\alpha(f) = P_\alpha(\sigma f) \]
where $P_\alpha$ is the orthogonal projection from $L^2_\alpha(\Omega)$ to $A^2_\alpha(\Omega)$. The Berezin transform of such an operator is
	\[ \tilde T_\sigma^\alpha(z) = \ip{T_{\sigma}^{\alpha} k_z^{2,\alpha}}{k_z^{2,\alpha}}. \]
This coincides with the definition (\ref{eq:intro-berezin}) when $p=2$ and $\sigma = 1_E$.
\begin{cor}\label{cor:toeplitz}
Let $\alpha$ be a non-negative integer , $\sigma \in L^\infty(\Omega)$, $\gamma>0$. There exists $c>0$ such that
	\[ \ip{T_\sigma^\alpha f}{f} \ge c\|f\|^2 \]
if and only if 
	\[ \liminf_{z \to \bomega} \tilde T_\sigma^\alpha (z) >0.\]
Moreover, there exists $q>0$ such that if $\tilde T_\sigma^\alpha(z) \ge \gamma$, then
	\[ \ip{T_\sigma^\alpha f}{f} \ge c \gamma^{q}\|f\|^2. \]
\end{cor}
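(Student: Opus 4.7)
My plan is to prove the corollary by reducing the nontrivial direction to Theorem \ref{thm:main} applied to a super-level set of $\sigma$; the reverse direction follows immediately from testing against normalized reproducing kernels.

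For the ``only if'' implication, assume $\ip{T_\sigma^\alpha f}{f} \ge c \|f\|^2$ for every $f \in A^2_\alpha(\Omega)$. Plugging in $f = k_z^{2,\alpha}$ and using $\|k_z^{2,\alpha}\| = 1$ gives $\tilde T_\sigma^\alpha(z) \ge c$ at every interior point, so $\liminf_{z \to \bomega} \tilde T_\sigma^\alpha(z) \ge c > 0$.

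For the main direction, I would first promote the liminf hypothesis to a global lower bound $\tilde T_\sigma^\alpha(z) \ge \gamma$ on all of $\Omega$: since $\tilde T_\sigma^\alpha$ is continuous, and since $\sigma \ge 0$ is not essentially zero while the zero set of the holomorphic function $K_\alpha(z,\cdot)$ has Lebesgue measure zero for each fixed $z$, the function $\tilde T_\sigma^\alpha$ is strictly positive in the interior, and so the liminf hypothesis together with compactness gives a uniform lower bound, at possibly a worse constant. Now introduce the super-level set $E_t := \{w \in \Omega : \sigma(w) > t\}$ and decompose the integral defining $\tilde T_\sigma^\alpha(z)$ over $E_t$ and its complement:
\[ \tilde T_\sigma^\alpha(z) \le \|\sigma\|_\infty \int_{E_t} |k_z^{2,\alpha}(w)|^2 |\rho(w)|^\alpha \, dA(w) + t. \]
Choosing $t = \gamma/2$ yields $\int_{E_{\gamma/2}} |k_z^{2,\alpha}|^2 |\rho|^\alpha \, dA \ge \gamma/(2\|\sigma\|_\infty)$ uniformly in $z$, so condition (iii) of Theorem \ref{thm:main} holds for $E_{\gamma/2}$ with $p = 2$. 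The polynomial form of the sampling constant in that theorem yields $\|f\|_{A^2_\alpha}^2 \le C_\sigma \gamma^{-q} \|f\|_{L^2_\alpha(E_{\gamma/2})}^2$ for some $q > 0$ (with $C_\sigma$ absorbing $\|\sigma\|_\infty$). Since $\sigma \ge (\gamma/2) \, 1_{E_{\gamma/2}}$ pointwise, we also have $\ip{T_\sigma^\alpha f}{f} = \int_\Omega \sigma |f|^2 |\rho|^\alpha \, dA \ge (\gamma/2) \|f\|_{L^2_\alpha(E_{\gamma/2})}^2$. Combining the two inequalities produces $\ip{T_\sigma^\alpha f}{f} \ge c \gamma^{q+1} \|f\|^2$, which gives both the qualitative lower bound and the quantitative polynomial estimate.

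The heart of the argument is transferred from Theorem \ref{thm:main} by the super-level set reduction, which is a standard device. The only items requiring any care are the promotion of the boundary liminf to a uniform interior bound, and the bookkeeping of the exponent through the square appearing in $(\tilde T_E^{2,\alpha}(z))^2 = \int_E |k_z^{2,\alpha}|^2 |\rho|^\alpha \, dA$ versus the unsquared definition of $\tilde T_\sigma^\alpha$. Neither step should present a serious obstacle.
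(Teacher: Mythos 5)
The paper leaves Corollary~\ref{cor:toeplitz} without explicit proof, treating it as an immediate consequence of Theorem~\ref{thm:main}. Your argument supplies the missing details correctly: the ``only if'' direction is the trivial test against normalized kernels, the super-level set decomposition $E_{\gamma/2}=\{\sigma>\gamma/2\}$ converts the Berezin transform lower bound into the condition $\tilde T_{E_{\gamma/2}}^{2,\alpha}(z)\ge\sqrt{\gamma/(2\|\sigma\|_\infty)}$ (condition (iii) of Theorem~\ref{thm:main}), and the pointwise bound $\sigma\ge(\gamma/2)1_{E_{\gamma/2}}$ finishes the estimate with a polynomial dependence on $\gamma$; you also correctly flag the square in $\|k_z\|_{L^2(E)}$ versus $\int\sigma|k_z|^2$ as the only bookkeeping issue and handle the liminf-to-uniform-infimum promotion via continuity of the Berezin transform, strict positivity in the interior (from $\sigma\not\equiv0$ and the measure-zero zero set of $K_\alpha(z,\cdot)$), and compactness. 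This is precisely the standard Luecking-style reduction the corollary is meant to encode; no gaps.
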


This can be viewed as a version of the RKH for invertibility of positive Toeplitz operators. This also implies a version of the RKH for boundedness. Initially one may hope to obtain $\|T_\sigma\| \le C \sup_{z \in \Omega} |\tilde T_\sigma^{\alpha}(z)|$ where $C>1$. However, if $\|\sigma\|_{L^\infty} \le 1$, then this gives no improvement on $\|T_\sigma\| \le \|\sigma\|_\infty$ unless $\tilde T_\sigma^\alpha(z)$ is \textit{very} small. Applying some elementary functional analysis to Corollary \ref{cor:toeplitz}, one obtains the following.

\begin{cor}
 Let $\alpha$ and $\sigma$ be as above . There exists $0<c<1$ and $q>0$ such that if $\tilde T_\sigma^\alpha(z) \le \|\sigma\|_\infty(1-\gamma)$, then
	\[ \|T_{\sigma}^\alpha f\| \le \|\sigma\|_\infty (1-(c\gamma)^q)\|f\|. \]
\end{cor}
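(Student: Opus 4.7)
The strategy, hinted at in the remark preceding the statement, is to apply Corollary~\ref{cor:toeplitz} to the complementary symbol $\eta = \|\sigma\|_\infty - \sigma$ and translate the resulting lower bound on its Toeplitz quadratic form into an upper bound on $T_\sigma^\alpha$. Before doing so, I would normalize to $\|\sigma\|_\infty = 1$: both the hypothesis $\tilde T_\sigma^\alpha(z) \le \|\sigma\|_\infty(1-\gamma)$ and the target conclusion are positively homogeneous of degree one in $\sigma$ (since $T_{\lambda\sigma}^\alpha = \lambda T_\sigma^\alpha$ and $\tilde T_{\lambda\sigma}^\alpha = \lambda \tilde T_\sigma^\alpha$ for $\lambda > 0$), so the general case follows by multiplying through by $\|\sigma\|_\infty$ at the end.

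With the normalization in place, set $\eta := 1 - \sigma$, which is nonnegative, real, and bounded by $1$. The assignment $\sigma \mapsto \tilde T_\sigma^\alpha$ is linear and sends the constant function $1$ to the constant function $1$, because $\int_\Omega |k_z^{2,\alpha}|^2 |\rho|^\alpha \, dA = 1$. Hence
\[
\tilde T_\eta^\alpha(z) \;=\; 1 - \tilde T_\sigma^\alpha(z) \;\ge\; \gamma \qquad \text{for every } z \in \Omega.
\]
In particular $\liminf_{z\to\partial\Omega} \tilde T_\eta^\alpha(z) \ge \gamma > 0$, so Corollary~\ref{cor:toeplitz} applied to $\eta$ yields constants $c_0, q > 0$ (depending only on $\Omega$ and $\alpha$) with $\langle T_\eta^\alpha f, f\rangle \ge c_0 \gamma^q \|f\|^2$ for every $f \in A^2_\alpha$.

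To finish, I would convert this quadratic lower bound into an operator-norm upper bound on $T_\sigma^\alpha$. Since $P_\alpha$ reproduces the constant function $1$ and fixes $f \in A^2_\alpha$, a one-line computation gives the operator identity $T_\eta^\alpha = I - T_\sigma^\alpha$ on $A^2_\alpha$. Because $\sigma$ is real and nonnegative, $T_\sigma^\alpha$ is self-adjoint and positive semidefinite, so its operator norm coincides with its numerical radius; thus
\[
\|T_\sigma^\alpha\| \;=\; \sup_{\|f\|=1} \langle T_\sigma^\alpha f, f\rangle \;=\; \sup_{\|f\|=1} \bigl(1 - \langle T_\eta^\alpha f, f\rangle\bigr) \;\le\; 1 - c_0\gamma^q.
\]
Finally, choosing $c := \min(c_0^{1/q}, \tfrac{1}{2}) \in (0,1)$ ensures $(c\gamma)^q \le c_0\gamma^q$, so that $1 - c_0\gamma^q \le 1 - (c\gamma)^q$; undoing the normalization produces $\|T_\sigma^\alpha f\| \le \|\sigma\|_\infty (1 - (c\gamma)^q)\|f\|$, as desired. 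I do not foresee a genuine obstacle in this plan: once Corollary~\ref{cor:toeplitz} is in hand, the rest is functional-analytic bookkeeping around the identity $T_{1-\sigma}^\alpha = I - T_\sigma^\alpha$ and the positivity of the Toeplitz operator.
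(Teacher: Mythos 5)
Your proof is correct, and since the paper gives no argument of its own (it states the corollary follows from Corollary~\ref{cor:toeplitz} by ``elementary functional analysis''), the route you take---normalizing $\|\sigma\|_\infty=1$, passing to the complementary symbol $\eta=1-\sigma$, using that the Berezin transform sends $1$ to $1$ so $\tilde T_\eta^\alpha \ge \gamma$, then converting $T_\eta^\alpha=I-T_\sigma^\alpha$ plus self-adjointness/positivity of $T_\sigma^\alpha$ into the operator-norm bound---is exactly the intended one.
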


\subsection{Reverse Carleson Measures} \label{subsec:reverse}
Dominating sets are a special class of \textit{reverse Carleson measures}, which are measures $\mu$ such that for all $f \in A^p_\alpha(\Omega)$,
	\[ \|f\|_{L^{p}_\alpha(\Omega)} \le C_\mu \|f\|_{L^{p}_\alpha(\Omega,\mu)}, \quad \|f\|_{L^p_\alpha(\Omega,\mu)} = \left( \int_\Omega |f(z)|^p |\rho(z)|^\alpha \, d\mu(z) \right)^{1/p}. \]
Even when $\Omega=\mathbb D$, there is not a complete characterization of such measures \cite{fricain2015survey}. Sufficient density conditions have been given by Luecking \cite{luecking1985forward} and were recently extended by Calzi and Peloso \cite{calzi2021carleson} to the quite general case when $\Omega$ is a homogeneous type II Siegel domain.  The measures $\mu$ for which the $L^p_\alpha(\Omega)$ and $L^p_\alpha(\Omega,\mu)$ norms are equivalent on $A^p_\alpha(\Omega)$ can be characterized in some terms of the automorphisms of the domain $\Omega$ and the zero sets of functions $f \in A^p_\alpha(\Omega)$. However, we have already mentioned the lack of automorphic structure in our setting, and furthermore, the zero sets of functions in $A^p_\alpha(\Omega)$ do not have a measure-theoretic characterization---see the discussion below.

Our main result above characterizes the reverse Carleson measures of the form $d\mu = 1_E \, dA$. The methods also apply to
	\[ d \mu = 1_E \, d\H^{2n-2+\nu}, \quad \nu>0 \]
where $\H^s$ is the $s$-dimensional Hausdorff measure.

\begin{theorem}\label{thm:lower-dim-kob}
Let $\Omega \subset \mathbb{C}^n$ be a smoothly bounded strongly pseudoconvex domain and $1 \leq p \leq \infty.$  Suppose $E \subset \Omega$ satsifies, for some $r,\gamma >0$, $\nu \in (0,2)$
	\begin{equation}\label{eq:density-kob} \frac{|Y(w,r)|^{\frac{n-1}{n(n+1)}}\H^{2n-2+\nu}(E \cap Y(w,r))}{|Y(w,r)|^{(2n-2+\nu)/2n}} \ge \gamma \end{equation}
for all $w \in \Omega$. Then, 
	\[ \|f\|_{L^p_\alpha(\Omega)} \le C \max\{C,\gamma^{-q}\} \|f\|_{L^p_\alpha(E,\H^{2n-2+\nu})} \]
for all $f \in A^p_\alpha(\Omega)$.
\end{theorem}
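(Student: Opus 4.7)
The plan is to follow the scheme of (ii)$\Rightarrow$(i) in Theorem~\ref{thm:main}, substituting the Lebesgue-measure local Remez inequality with its lower-dimensional Hausdorff-measure analogue (announced in the abstract as the output of the Carleman-estimate machinery). First I would fix a covering $\{Y(w_j,r_0)\}_j$ of $\Omega$ by Kobayashi balls of bounded multiplicity. Since $|\rho|$ is essentially constant on each Kobayashi ball on a strongly pseudoconvex domain, the global inequality reduces to proving a uniform local estimate
\[
\int_{Y(w,r)} |f|^p |\rho|^\alpha\,dA \le C\gamma^{-q}\int_{E\cap Y(w,r')} |f|^p |\rho|^\alpha\,d\H^{2n-2+\nu}
\]
for fixed $r<r'<r_0$, whenever the density condition (\ref{eq:density-kob}) holds with parameter $\gamma$, and then summing with bounded multiplicity.

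Next, I would pull back via the standard non-isotropic affine map centered at $w$ that scales the complex normal direction by $\epsilon:=-\rho(w)$ and each complex tangential direction by $\epsilon^{1/2}$. Under this change of coordinates $Y(w,r)$ becomes quasi-comparable to the unit ball in $\C^n$ and $|Y(w,r)|\asymp \epsilon^{n+1}$. A direct computation with the anisotropic scaling of $\H^{2n-2+\nu}$ shows that the exponents appearing in (\ref{eq:density-kob}) are exactly those which make the density condition scale-invariant under this transformation. One is therefore reduced to a flat-model statement: if $\widetilde E\subset B(0,1)$ satisfies $\H^{2n-2+\nu}(\widetilde E)\ge \gamma$ and $f$ is holomorphic on a fixed larger ball, then
\[
\sup_{B(0,1/2)}|f|\le C\gamma^{-q}\|f\|_{L^\infty(\widetilde E)}.
\]

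The last step is to invoke the lower-dimensional Hausdorff-measure propagation-of-smallness inequality developed via Carleman methods earlier in the paper, which is exactly this model statement. Undoing the scaling, integrating against the essentially-constant weight $|\rho|^\alpha$, and summing the resulting local inequality over the bounded-multiplicity cover produces the asserted global inequality, with the same polynomial dependence on $\gamma$ as in Theorem~\ref{thm:main}.

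I expect the main obstacle to be the model propagation-of-smallness inequality itself. A naive one-variable Remez argument fails here because a holomorphic function on $\C^n$ can vanish identically on a complex hypersurface, which already carries non-trivial $\H^{2n-2}$ mass; the strict inequality $\nu>0$ is what rules this out, and exploiting that extra regularity quantitatively seems to require the flexible real-variable Carleman estimates of the paper rather than a purely complex-analytic argument. A secondary nuisance lies in the reduction step: the non-isotropic dilation acts on Hausdorff measure in a direction-dependent way, so a careful invariance check is needed to confirm that the unusual exponents in (\ref{eq:density-kob}) are indeed the right ones, and that the passage to the model ball does not lose quantitative control over $\gamma$.
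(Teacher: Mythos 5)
Your global scheme (cover $\Omega$ by Kobayashi balls of bounded multiplicity, use the good/bad index decomposition to reduce to a uniform local estimate, and exploit that $|\rho|^\alpha$ is essentially constant on each ball) is exactly what the paper does, and it is correct. The gap is in the local estimate, which you leave as a black box to be "invoked" from the Carleman-estimate section of the paper. No such lower-dimensional propagation-of-smallness inequality is produced by the Carleman machinery; the Carleman estimate (Proposition~\ref{prop:local} and Lemma~\ref{lemma:local}) only gives the Lebesgue-measure local inequality. The paper proves the lower-dimensional local estimate (Lemma~\ref{lemma:local-lower}) by a Logunov--Malinnikova style argument: cover $Y(z,r)$ with $K$ small isotropic cubes $Q$, show via Harnack and the \emph{Lebesgue} local Remez inequality that any cube meeting the sublevel set $F_a=\{|f|\le e^{-a}\}\cap Y(z,r)$ must contain a zero of $f$ once $K$ and $a$ are calibrated, and then bound the number of such cubes using the zero-set estimates of Lemma~\ref{lemma:zero} (Jensen's formula for polydiscs, giving $\H^{2n-2}(\{f=0\})\lesssim \delta(z)^{n-1}N$ globally and $\gtrsim \ell^{2n-2}$ on a cube with a central zero). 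This turns into the crucial bound $\H^{2n-2+\nu}(F_a)\lesssim N|Y(z,r)|^{q^*}e^{-c\nu a/(N+1)}$, from which $E\setminus F_a\neq\varnothing$ once $\gamma$ exceeds that bound. Your remark that a "purely complex-analytic argument" seems insufficient is exactly backwards: the finishing step \emph{is} complex-analytic (Jensen-type zero counting), sitting on top of the Carleman-derived Lebesgue result.

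A second, more technical problem with your outline is the reduction to a flat model ball via the anisotropic dilation $z\mapsto(\epsilon z_1,\epsilon^{1/2}z')$. Hausdorff measure of non-integer dimension does not transform by the Jacobian determinant of an affine map but by its extreme singular values, so there is no clean scale-invariance to check, and the condition one gets on $\widetilde E$ is not simply $\H^{2n-2+\nu}(\widetilde E)\ge\gamma$. The paper sidesteps this entirely by working in the \emph{unscaled} Euclidean coordinates: the cubes in the covering argument have side length $\ell\sim(|Y(z,r)|/K)^{1/2n}$ and the Hausdorff measure is never pushed forward under the anisotropic map. The awkward exponents in \eqref{eq:density-kob}, in particular the factor $|Y(w,r)|^{(n-1)/(n(n+1))}$, are precisely the residue of comparing $\delta(z)^{n-1}$ (from the Kobayashi zero-set bound) to powers of $|Y(z,r)|\sim\delta(z)^{n+1}$, not a scale-invariance normalization for the anisotropic dilation, as the paper's remark following Lemma~\ref{lemma:local-lower} explains.
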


It is important to point out that this result \textit{does not} hold when $\nu=0$. This is because the zero sets of holomorphic functions of $n$ variables are $(2n-2)$-dimensional. So in this case, the only hope of a measure-theoretic condition is that the density (\ref{eq:density-kob}) be large enough, which can be derived from our methods, see Corollary \ref{cor:2n-2}. One can go below the threshold of $2n-2$, see Theorem \ref{reversecarleson} for the complete statement. Our main application though, will be the following sampling theorem.

\begin{theorem}\label{thm:sampling}
Let $\Omega \subset \mathbb C^n$ be a smoothly bounded strongly pseudoconvex domain. For each $r>0$, $1 \le p < \infty$, and $\alpha > -1$, there exists $C,q>0$ such that if $s < C^{-1}\gamma^q$ and $\{a_j\}$ is any sequence in $\Omega$ satisfying
	\[ \inf_{z \in \Omega} \frac{\left|\cup_{j =1}^\infty Y(a_j, s) \cap Y(z,r)\right|}{|Y(z,r)|} \ge \gamma, \]
then
	\[ \|f\|_{L^p_\alpha(\Omega)}^p \le C\sum_{j=1}^\infty |f(a_j)|^p\operatorname{dist}(a_j,\partial \Omega)^{n+1+\alpha} \]
for all $f \in A^p_\alpha(\Omega)$.
\end{theorem}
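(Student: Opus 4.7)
The approach is to apply Theorem~\ref{thm:main} to the relatively dense set $E = \bigcup_j Y(a_j, s)$. First, I would pass to a subsequence of $\{a_j\}$ which is maximally separated in the Kobayashi distance at scale $s/4$; by the doubling property of the Kobayashi metric on strongly pseudoconvex domains, the density hypothesis survives up to a multiplicative constant, while the enlarged balls $\{Y(a_j,R)\}$ for any fixed $R$ acquire bounded overlap. Discarding points can only reduce the right-hand side of the desired inequality, so this is harmless. The quantitative form of (ii)$\Rightarrow$(i) in Theorem~\ref{thm:main} then yields
\[ \|f\|_{L^p_\alpha(\Omega)}^p \;\le\; C_0\,\gamma^{-q_0} \sum_j \int_{Y(a_j,s)} |f|^p |\rho|^\alpha\, dA. \]

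The heart of the argument is a local estimate which replaces each integral over $Y(a_j,s)$ by a point value at $a_j$, up to a small error on a fixed larger scale:
\[ \int_{Y(a_j,s)} |f|^p |\rho|^\alpha\, dA \;\le\; C\,|f(a_j)|^p \operatorname{dist}(a_j,\partial\Omega)^{n+1+\alpha} \;+\; C\,s^{\epsilon} \int_{Y(a_j,R)} |f|^p |\rho|^\alpha\, dA, \]
for some $\epsilon > 0$ and fixed $R > s$. This should follow from three ingredients on strongly pseudoconvex domains: (a) the weight $|\rho|$ is comparable to $\operatorname{dist}(a_j,\partial\Omega)$ on any bounded Kobayashi ball about $a_j$; (b) the volume estimate $|Y(a_j,s)| \sim s^{2n}\operatorname{dist}(a_j,\partial\Omega)^{n+1}$ for small $s$; and (c) a Cauchy or Schwarz--Pick-type estimate bounding the oscillation $|f - f(a_j)|$ pointwise on $Y(a_j,s)$ by $s^{\epsilon/p}$ times an $L^p$-average of $f$ on $Y(a_j,R)$. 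Summing in $j$, the bounded-overlap property gives $\sum_j \int_{Y(a_j,R)} |f|^p |\rho|^\alpha\, dA \le N\|f\|_{L^p_\alpha(\Omega)}^p$. Chaining the three displays and requiring $C_0 \gamma^{-q_0} \cdot C s^\epsilon < \frac{1}{2}$ (i.e.\ $s < C^{-1}\gamma^{q}$ for suitable constants) lets one absorb the error into the left-hand side and conclude.

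The principal obstacle is the oscillation estimate (c): a naive bound $|f(z)-f(a_j)| \le \|f\|_\infty$ is useless for the absorption step. One must exploit the anisotropic geometry of small Kobayashi balls, which near $\partial\Omega$ are comparable to polydisks with normal and tangential half-lengths of order $s\operatorname{dist}(a_j,\partial\Omega)$ and $s\operatorname{dist}(a_j,\partial\Omega)^{1/2}$ respectively. Cauchy estimates on a slightly enlarged polydisk should then yield $L^p$-averaged control of $|f - f(a_j)|$ with a polynomial factor in $s$. Pinning down the exact exponent $\epsilon$, and tracking constants against $\gamma^{-q_0}$ so as to arrive at the stated threshold $s < C^{-1}\gamma^q$, is the most delicate bookkeeping in the argument.
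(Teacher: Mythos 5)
Your direct strategy---apply the quantitative (ii)$\Rightarrow$(i) of Theorem~\ref{thm:main} to the union $E=\bigcup_j Y(a_j,s)$, then trade each small-ball integral for a point value via a Lipschitz-type oscillation estimate---is a genuinely different organization from the paper's proof, which constructs the discrete measure $\mu=\sum_k |Y(b_k,R)|\,|\rho(b_k)|^\alpha\,\delta_{b_k}$ and feeds it into the general reverse Carleson theorem (Theorem~\ref{reversecarleson}). Your ingredient (c) is exactly Proposition~\ref{differencequotient}: Cauchy's formula on the anisotropic polydisc comparable to $Y(z,R)$ gives $|f(z)-f(w)|\le C_R\, d(z,w)\,\langle f\rangle_{Y(z,R),p}$ for $w\in Y(z,s)$, which supplies the factor $\tanh^{-1}(s)\sim s$. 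So the key local estimate is the same; the two proofs differ in how the global sum is managed---you sum directly over the pruned sequence, while the paper packages the same oscillation bound as Lemma~\ref{reversecarlesonlemma} and invokes a Carleson-norm bound for $\mu_\alpha$.

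There is, however, a concrete error in your bookkeeping that needs repair. After pruning to a maximal $s/4$-net $\{a_{j_k}\}$, the balls $Y(a_{j_k},R)$ for a \emph{fixed} $R$ (say $R=1/2$) do \emph{not} acquire bounded overlap: a volume count using Proposition~\ref{comparablesize} shows the multiplicity is of order $(R/s)^{2n}$, which blows up as $s\to 0$. Your displayed local estimate also drops the factor $|Y(a_j,s)|\,|\rho(a_j)|^\alpha\sim s^{2n}\delta(a_j)^{n+1+\alpha}$ coming from integrating over the small ball; the correct form is
\[
\int_{Y(a_j,s)}|f|^p|\rho|^\alpha\,dA \;\le\; C\,s^{2n}\,|f(a_j)|^p\,\delta(a_j)^{n+1+\alpha}\;+\;C\,s^{2n+p}\int_{Y(a_j,R)}|f|^p|\rho|^\alpha\,dA.
\]
These two omissions exactly compensate: summing over the $s/4$-separated subsequence, the $s^{2n}$ factor cancels the $s^{-2n}$ overlap and the error term is $\sim s^{p}\|f\|^p_{L^p_\alpha}$, which is absorbable once $C\gamma^{-q_0}s^{p}<\tfrac12$, giving the threshold $s<C^{-1}\gamma^{q}$ with $q=q_0/p$. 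So the argument closes, but only after correcting both the overlap count and the volume factor; as written, the claim of bounded overlap together with the missing $s^{2n}$ would lead to an inconsistent threshold (indeed, one that could force $s$ to be \emph{large} when $p<2n$). Be aware that for fixed $R$ with $\tanh^{-1}R<1$, Proposition~\ref{comparablesize} does give $|Y(a_j,R)|\sim\delta(a_j)^{n+1}$, which is what you need to convert $\langle f\rangle_{p,Y(a_j,R)}^p$ into $\delta(a_j)^{-n-1}\int_{Y(a_j,R)}|f|^p\,dA$ in the computation above.
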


Since the conditions in Theorems \ref{thm:lower-dim-kob} and \ref{thm:sampling} may not be necessary, one may prefer to test over geometrically simpler sets than the Kobayashi balls, for example Euclidean balls or cubes. This is indeed possible.

\begin{theorem}\label{thm:lower-dim-euclid}
Let $\Omega \subset \C^n$ be open. Suppose $E \subset \Omega$ satsifies, for some $r,\gamma >0$, $\nu \in (0,2]$,
	\begin{equation}\label{eq:density-euclid} \frac{\H^{2n-2+\nu}(E \cap Q)}{\ell(Q)^{2n-2+\nu}} \ge \gamma \end{equation}
for all cubes $Q \subset \Omega$ of side length $r \dist(Q, \Omega^c)$. Then, for any $1 \le p \le \infty$, $\alpha > -1$, there exist $C,q>0$ such that
	\[ \|f\|_{L^p_\alpha(\Omega)} \le C \max\{C,\gamma^{-q}\} \|f\|_{L^p_\alpha(E,\H^{2n-2+\nu})} \]
for all $f \in A^p_\alpha(\Omega)$.
\end{theorem}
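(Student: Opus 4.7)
The strategy is to reduce the global inequality to the local three-sphere/Remez-type inequality which, as announced in the abstract, is the paper's principal technical tool. Since $\Omega$ is only assumed open (no pseudoconvexity, no automorphisms, no invariant metric), the right geometric scaffolding is a Whitney decomposition rather than a Kobayashi covering. First I would take a standard Whitney decomposition $\{Q_j\}$ of $\Omega$ into closed dyadic cubes with $\ell(Q_j) \sim \dist(Q_j, \partial\Omega)$ and whose mildly dilated versions have bounded overlap. For each $j$ let $\tilde Q_j$ be the concentric cube with side length exactly $r\,\dist(\tilde Q_j,\partial\Omega)$; for a fixed small $r$ depending only on the Whitney constants, $\tilde Q_j \subset \Omega$ and the hypothesis (\ref{eq:density-euclid}) applies, yielding
\[
\mathcal{H}^{2n-2+\nu}(E\cap \tilde Q_j) \ \ge\ \gamma\,\ell(\tilde Q_j)^{2n-2+\nu} \ \sim\ \gamma\,\ell(Q_j)^{2n-2+\nu}.
\]

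Next I would apply the local Remez inequality to $f\in\Hol(\tilde Q_j)$, using a rescaling that sends $\tilde Q_j$ to the unit cube. In the form proved from the Carleman three-sphere estimate (with the Hausdorff-measure refinement promised in the abstract via a slicing/integralgeometric passage from Lebesgue-positive to $\mathcal{H}^{2n-2+\nu}$-positive inner sets), this gives for each $1\le p\le\infty$
\[
\sup_{Q_j}|f|^p \ \le\ C\gamma^{-q}\,\frac{1}{\mathcal{H}^{2n-2+\nu}(E\cap\tilde Q_j)}\int_{E\cap\tilde Q_j}|f|^p\,d\mathcal{H}^{2n-2+\nu},
\]
with $C,q$ independent of $j$ by scale-invariance of the local inequality (the proportion $\gamma$ is preserved under the dilation). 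Combined with the lower bound on $\mathcal{H}^{2n-2+\nu}(E\cap\tilde Q_j)$ above, this becomes $\sup_{Q_j}|f|^p \lesssim \gamma^{-q'}\ell(Q_j)^{-(2n-2+\nu)}\int_{E\cap\tilde Q_j}|f|^p\,d\mathcal{H}^{2n-2+\nu}$.

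To pass to the weighted $L^p_\alpha$ norms, I use that both $|\rho(z)|$ and $\ell(Q_j)$ are comparable on $Q_j$ (and on $\tilde Q_j$), so $|\rho|^\alpha$ pulls out as $\ell(Q_j)^\alpha$ on either side. Integrating the pointwise bound against $|\rho|^\alpha\,dA$ over $Q_j$ and recognizing the left-hand side as $\|f\|_{L^p_\alpha(Q_j)}^p$, I get
\[
\int_{Q_j}|f|^p|\rho|^\alpha\,dA \ \lesssim\ \gamma^{-q'}\ell(Q_j)^{2-\nu}\int_{E\cap\tilde Q_j}|f|^p|\rho|^\alpha\,d\mathcal{H}^{2n-2+\nu}.
\]
Summing in $j$ and invoking the bounded overlap of $\{\tilde Q_j\}$ gives the claimed inequality, with the extra factor $\ell(Q_j)^{2-\nu}$ absorbed into the constant on the bounded components (and $\nu=2$ requires no absorption at all); the case $p=\infty$ follows by taking the supremum in the pointwise bound.

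The main obstacle is the local Remez-type inequality with an $\mathcal{H}^{2n-2+\nu}$-test set, which powers every step above. Carleman estimates naturally deliver a three-sphere inequality where the innermost set has positive Lebesgue measure, so the real work lies in upgrading it to a lower-dimensional test set of positive Hausdorff content in several complex variables, presumably by a Fubini/slicing argument that picks a $(2n-2+\nu)$-dimensional slice on which $f$ restricts to a holomorphic function of one variable of positive Lebesgue trace, then applies the planar version. Once that local statement is in hand, everything else is a Whitney-decomposition bookkeeping exercise that respects the purely Euclidean, non-pseudoconvex geometry of the hypothesis.
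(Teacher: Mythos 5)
Your global architecture matches the paper's: a Whitney decomposition of $\Omega$ into cubes with $\ell(Q_j) \sim \dist(Q_j,\partial\Omega)$, application of a local Remez-type inequality on each Whitney cube, exploitation of $|\rho| \sim \ell(Q_j)$ to handle the weight, and summation with bounded overlap. The paper explicitly says Theorem \ref{thm:lower-dim-euclid} follows by replacing Kobayashi balls with a Whitney decomposition, so that part is correct and in step.

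The genuine gap is in how you propose to establish the local Remez inequality with an $\mathcal{H}^{2n-2+\nu}$ test set. You suggest a Fubini/slicing argument that reduces to a one-complex-variable Remez inequality on a plane where $E$ has ``positive Lebesgue trace.'' This cannot work when $\nu < 2$: a complex line $\kappa$ is $2$-real-dimensional, and if $\mathcal H^{2n-2+\nu}(E)>0$, then for generic $\kappa$ the trace $E\cap\kappa$ has at best positive $\mathcal H^\nu$ measure, which is a null set in $\kappa$ for $\nu<2$. Even in the base case $n=1$ the problem is already to test a planar holomorphic function against a set of Hausdorff dimension $\nu<2$, so slicing never reaches a case solvable by Proposition \ref{prop:local}. (Your slicing idea is essentially the paper's own proof of the full-dimensional Lemma \ref{lemma:local}, and it does give $\nu=2$; the whole difficulty of Section \ref{sec:lower-dim} is going below that threshold.) The paper's actual mechanism, in Lemma \ref{lemma:local-lower}, is entirely different: normalize $\sup_Y|f|=1$, cover $Y$ by $K$ small cubes of side $\ell$, and show via Harnack plus the Lebesgue Remez estimate that every small cube meeting the sublevel set $F_a=\{|f|\le e^{-a}\}$ must contain a zero of $f$; then the Jensen-type bounds of Lemma \ref{lemma:zero} control $\mathcal H^{2n-2}(\{f=0\})$ from above, forcing $\mathcal H^{2n-2+\nu}(F_a) \lesssim \ell^\nu \mathcal H^{2n-2}(\{f=0\}) \lesssim e^{-c a}$ for suitable parameters. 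If $\mathcal H^{2n-2+\nu}(E)\ge\gamma|Y|^{q^*}$ then $E\not\subset F_a$ for an explicit $a$, giving $\sup_E|f|\ge e^{-a}$. This is a Logunov--Malinnikova style nodal-set/cube-counting argument, not an integral-geometric slicing argument, and it is also why the local constant ends up depending on an a priori bound $N^*$ on the doubling index: the number of zeros controls the number of bad cubes, and the doubling index controls both.

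A secondary, smaller point: your absorption of the stray factor $\ell(Q_j)^{2-\nu}$ ``into the constant on the bounded components'' is only unproblematic if $\Omega$ is bounded or if $\ell(Q_j)$ stays uniformly bounded; for a general open $\Omega$ this would need to be stated explicitly, though the paper itself glosses over this in the same way.
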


The analogue holds for Theorem \ref{reversecarleson} also.

\subsection{Orientation}
The strategy we employ first shows that one can get a sufficient condition by completely ignoring the complex geometry of the domain, treating the analytic function as a solution to the $\dbar$ equation in $\Omega \subset \R^{2n}$. First, we use a Carleman estimate to prove a local Remez-type inequality uniform over many sets with a certain boundary separation. This is Section \ref{sec:local}. In Section \ref{sec:pseudo}, this is applied by decomposing the domain into Kobayashi balls. Section \ref{subsec:suff} proves (ii) implies (i) in Theorem \ref{thm:main}. The other piece of Theorem \ref{thm:main}, the connection between relative density and the Berezin transform, is established in Section \ref{subsec:nec}. It relies on further understanding the geometry of Kobayashi balls in order to obtain a vanishing Rudin-Forelli estimate, which is the content of Section \ref{subsec:kernel}. Lastly, we extend to lower-dimensional sets in Section \ref{sec:lower-dim}, modeled after the work of Logunov and Malinnikova in harmonic functions \cite{logunov2018quantitative}.

The authors would like to thank Marco M. Peloso for some helpful comments.  They would also like to acknowledge the anonymous referees for their extremely thorough and careful review of the manuscript and helpful suggestions, which led to many improvements.

\section{Local Estimate for Holomorphic functions of several variables}\label{sec:local}
In this section, we will establish a three sphere-type inequality of the form
	\begin{equation}\label{eq:three} \sup_{Y} |f| \le (\sup_E |f|)^\theta (\sup_X |f|)^{1-\theta}, \quad f \mbox{ holomorphic on }X \end{equation}
where $X$ is a suitable ``double'' of $Y$ and $E \subset Y$ of positive measure. Past results of this type rely on any number of features; see \cite{brudnyi1999local,nazarov2004lower,logunov2018quantitative,kovrijkine01,lebeau19} for five different perspectives. Despite the different approaches, each result is proved when $X$ and $Y$ have special geometry---usually both are Euclidean balls. The result which appears to be most readily extended to other geometries is the result of Lebeau and Moyano \cite{lebeau19} which uses Carleman estimates.

This technique, which in recent history has dominated the field of unique continuation for PDEs, has strong historical connections to complex analysis and specifically pseudoconvex domains. In this regard, we mention the pioneering work of T. Carleman \cite{carleman1939probleme} on elliptic PDE and the remarkable development of his ideas by H\"ormander \cite{hormander1973introduction,hormander-books} in both real and complex analysis.

Our method for establishing (\ref{eq:three}) is a refinement of Section 3 in \cite{lebeau19}. The first main difference is we would like to obtain the result in arbitrary dimensions. This requires us to consider each component of the $\dbar$ operator. Second, if one only needed the result for two sets $X$ and $Y$ with regular boundary, the result from \cite{lebeau19} applies. However, we will need the result to hold uniformly over many sets with varying geometry. 

Examining their proof, it turns out that the constants one obtains depend only on the estimates for the Green functions associated to the larger set $X$, specifically, that 
	\[ \sup_{y \in Y} G(x,y) \to 0\mbox{ as } x \to \bomega,\mbox{ and } \inf_{x,y \in Y} G(x,y) > 0. \]
To those well-versed in elliptic PDEs, it may be obvious that the convergence rate and lower bounds depend only on the regularity of the boundary $X$ and the separation between $\partial X$ and $Y$. This principle is well-known, but in some sense has remained unexamined until very recently \cite{david2021carleson}.

The modification we propose dispenses with the explicit regularity of $\partial X$ and instead relies on the regularity of an intermediate domain $Z$ which we will construct. Some definitions are now in order. For any $s>0$ and $\X \subset \C^n$, let us introduce the notation
	\begin{equation}\label{eq:nbhd} \X_s = \{ x \in \X : \dist(x,\partial \X) \le s \}, \quad \X_s' = \X \backslash \X_s. \end{equation}

\begin{definition}
Let $\mathcal \X$ be a collection of bounded, open sets in $\R^n$. We say $\mathcal \X$ is a \textit{regular boundary family} if for each $\X \in \mathcal \X$ and $p \in \partial \X$, there exists a barrier function $\omega_p^\X$ satisfying the following:
\begin{itemize}
	\item[(i)] For each $\eta>0$,
	\begin{equation}\label{eq:barrier-pos} \inf_{\X \in \mathcal \X} \inf_{p \in \partial \X} \inf_{ y \in \bar Z \setminus B(p, \eta) } \omega_p^\X(y) > 0. \end{equation}
	\item[(ii)] For each $\ep>0$, there exists $\delta>0$ such that for any $\X \in \mathcal \X$,
	\begin{equation}\label{eq:barrier-zero} \omega_p^\X(y) \le \ep \quad \mbox{whenever } |y-p| < \delta.\end{equation}
\end{itemize}
A \textit{barrier function} at $p \in \partial \X$ is a nonegative function on $\partial \X$ which vanishes only at $p$ and is superharmonic in $\X$. 
\end{definition}

It will be important for us that the following class of domains forms a regular boundary family.
\begin{definition}
For $d>0$, a domain $\X$ satisfies the \textit{$d$-uniform exterior sphere condition} if at each point $p \in \partial \X$, there exists an open ball $B$ of radius $d$ such that
	\[ \bar B \cap \bar Z = \{p\}. \]
\end{definition}

For each $d>0$, the collection of all domains satisfying the $d$-uniform exterior sphere condition forms a regular boundary family. In fact, one can explicitly construct the barrier functions in this case, see \cite[p. 27]{gilbarg2001elliptic}.

\begin{definition}
Let $\Gamma$ be the fundamental solution to the Poisson equation in $\R^{2n}$ and $\X \subset \R^{2n}$. We say $G_\X(x,y)$ is the \textit{Green function} for $\X$ if
	\[ G_\X(x,y) = \Gamma(x-y) - H_\X(x,y) \]
where $H_\X(x,y)$ is the harmonic corrector satisfying, for each $x \in \X$,
	\[ \left\{ \begin{array}{rcll} \Delta_y H_\X(x,y) &=&0&y \in \X, \\ H_\X(x,y) &=& \Gamma(x-y) & y \in \partial \X \end{array} \right. \]
\end{definition}

\begin{proposition}\label{prop:green}
Let $\mathcal \X$ be a regular boundary family such that $\diam \X \le \frac 12$ for each $\X \in \mathcal \X$. Let $G_\X$ be the Green function for each $\X \in \mathcal \X$. Then, for each $\eta>0$,
	\[ \inf_{\X \in \mathcal \X} \inf_{\substack{x,y \in \X_\eta', \\ x \ne y }} G_\X(x,y) > 0,\]
and for each $\ep>0$, there exists $\delta>0$ such that for any $\X \in \mathcal \X$,
	\[ \sup_{y \in \X_\eta'} G_\X(x,y) \le \ep \quad \mbox{whenever } x \in \X_\delta. \]
\end{proposition}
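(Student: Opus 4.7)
The plan is to derive both statements from the maximum principle applied to the decomposition $G_\X(x,y) = \Gamma(x-y) - H_\X(x,y)$. The barrier functions from the regular boundary family hypothesis drive the vanishing (upper) part, while a separate maximum principle application to $H_\X$, combined with a Harnack chain, yields the uniform positive lower bound.

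\textbf{Vanishing.} First I would fix $\eta, \ep > 0$ and $y \in \X_\eta'$ and study $u(\cdot) := G_\X(\cdot, y)$ on $\X \setminus \{y\}$: it is harmonic there, vanishes on $\partial \X$, and satisfies $u \leq \Gamma(\cdot - y)$ since $H_\X \geq 0$. Hence on $\partial B(y, \eta/2)$ I have $u \leq \Gamma(\eta/2) =: M$. By property (i) applied at radius $\eta/4$, there is $c > 0$ with $\omega_p^\X \geq c$ on $\bar\X \setminus B(p, \eta/4)$ uniformly in $\X$ and $p$. Since $y \in \X_\eta'$ forces $|z - p| \geq \eta/2$ for every $z \in \partial B(y, \eta/2)$ and every $p \in \partial \X$, I would then take $v_p := (M/c)\omega_p^\X$ as a superharmonic majorant of $u$ on the topological boundary of $\X \setminus B(y, \eta/2)$, so the maximum principle gives $u \leq v_p$ inside. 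Then property (ii), supplying $\delta$ with $\omega_p^\X(x) \leq \ep c/M$ on $B(p, \delta)$, finishes the argument by choosing $p \in \partial\X$ closest to $x \in \X_\delta$ and taking $\delta < \eta/2$, which automatically places $x$ outside $B(y, \eta/2)$.

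\textbf{Lower bound.} Next I would note that $H_\X(x, \cdot)$ is harmonic in $\X$ with boundary values $\Gamma(x - \cdot)$, so by the maximum principle $H_\X(x, y) \leq \sup_{z \in \partial\X} \Gamma(x - z) = \Gamma(\dist(x, \partial\X)) \leq \Gamma(\eta)$ whenever $x \in \X_\eta'$. This yields the near-diagonal estimate $G_\X(x,y) \geq \Gamma(|x-y|) - \Gamma(\eta) \geq \Gamma(\eta/2) - \Gamma(\eta) > 0$ valid for $|x-y| \leq \eta/2$. To handle $|x-y| > \eta/2$, I would transfer this lower bound to $x$ via a Harnack chain for the positive harmonic function $G_\X(\cdot, y)$ on $\X \setminus \{y\}$: select a path from a reference point on $\partial B(y, \eta/2)$ to $x$ lying inside $\X_{\eta/C}' \setminus B(y, \eta/4)$, of length bounded by a constant multiple of $\diam \X \leq 1/2$, and cover it by a chain of balls of radius comparable to $\eta$. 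The cardinality of the chain is then controlled by $\eta$ alone, so the accumulated Harnack constant depends only on $\eta$ and the family $\mathcal X$.

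\textbf{Main obstacle.} The hard part will be justifying this interior Harnack chain uniformly over $\mathcal X$: I must guarantee that the $(\eta/C)$-interior $\X_{\eta/C}'$ of every $\X \in \mathcal X$ is connected, at least in the component joining $x$ to a neighborhood of $y$, with chain length uniformly bounded. For the $d$-uniform exterior sphere example, which is the case ultimately used in the rest of the paper, this follows from standard non-tangential chain estimates together with the diameter bound; in full generality one would argue by compactness within $\mathcal X$, in the spirit of the philosophy attributed to \cite{david2021carleson}. By contrast, the vanishing argument is entirely modular: it uses only properties (i)--(ii) of the barriers, the pointwise bound $G_\X \leq \Gamma$, and the maximum principle, never seeing the global geometry of $\X$ beyond $\diam \X \leq 1/2$.
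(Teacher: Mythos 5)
Your proposal is correct and takes a genuinely leaner route than the paper in both halves, while sharing the same final handwave about the Harnack chain. For the vanishing part, the paper works with the harmonic corrector: it sandwiches $H_\X(x,\cdot)$ between $h_1 = \Gamma(x-p) - \ep/3 - M\omega_p^\X$ and $h_2 = \Gamma(x-p) + \ep/3 + M\omega_p^\X$, obtaining $|H_\X(x,y) - \Gamma(x-y)| \le \ep$ near $\partial\X$ (and then passes to $G_\X$ by definition, and to the stated form by symmetry in $x,y$). You instead compare $G_\X(\cdot,y)$ directly against the single superharmonic majorant $v_p = (M/c)\,\omega_p^\X$ on $\X \setminus \bar B(y,\eta/2)$, where $M = \Gamma(\eta/2)$ is the trivial upper bound from $G_\X \le \Gamma$ and $c$ is the uniform lower bound on $\omega_p^\X$ away from $p$ furnished by property~(i); this is a one-sided argument using only the upper barrier, and it is verifiably correct (the uniform gap $|z-p| > \eta/2$ for $z \in \partial B(y,\eta/2)$, $p \in \partial\X$ when $y \in \X_\eta'$ is exactly what makes $v_p \ge u$ hold on the right piece of the boundary). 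For the positive lower bound, your near-diagonal estimate $G_\X(x,y) \ge \Gamma(\eta/2) - \Gamma(\eta)$ for $|x-y| \le \eta/2$ comes straight from the maximum principle bound $H_\X(x,\cdot) \le \sup_{\partial\X}\Gamma(x-\cdot) \le \Gamma(\eta)$, which is considerably more elementary than the paper's construction of the midpoint $y_0$ on the interior normal and the tailored barrier $h_3 = \theta\,\omega_{\pi(x)}^\X + \Gamma(x-\pi(x))$; both constructions accomplish the same near-diagonal positivity. (You should state explicitly that $H_\X \ge 0$ follows from the minimum principle once $\diam\X \le 1/2$ forces $\Gamma > 0$, and likewise $\Gamma(\eta/2) - \Gamma(\eta) > 0$ only because $\eta < 1/2$; these are trivial but worth a line.) Finally, both proofs pass from the near-diagonal bound to the global one by a Harnack chain inside $\X_{\eta/C}'$, and here the paper simply writes ``by the minimum principle and Harnack's inequality'' without addressing whether $\X_{\eta/C}'$ is connected or the chain length uniformly bounded over $\mathcal{\X}$. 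You correctly flag this as the main obstacle; it is genuinely the most delicate point and is not resolved in either argument, though it is benign in the application (where $\X = Z$ is a polydisc-like set with exterior balls removed and the relevant points lie in a connected core set $Y$). So: essentially the same tools (barriers, maximum principle, Harnack), assembled more economically in your version, with the same implicit appeal to a uniform Harnack chain that the paper makes silently.
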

We will give an elementary proof of Proposition \ref{prop:green} at the end of this section. However, if one is content to accept this, we can obtain the following proposition extending the result of Lebeau and Moyano in \cite{lebeau19}.

\begin{proposition}\label{prop:local}
Let $d,\ell>0.$ There exists $C>0$ such that for any measurable subsets $Y \subset X \subset \mathbb{C}^n$, and $E \subset Y$ of positive measure, if the sets $X$, $Y$ satisfy, for some affine map $D$,
	\begin{itemize}
	\item[(i)] $\diam( D(Y)) = 1/4$,
	\item[(ii)] $|D(Y)| \ge \ell$,
	\item[(iii)] $\dist(\partial D(X),\partial D(Y)) \ge d$,
	\end{itemize}
then
\begin{equation}\label{eq:prop-local} \avg{f}_{2,Y} \le Ce^{ C(N+1)S^*} \avg{f}_{2,E} \end{equation}
for all $f \in A^2(X)$, where 
	\[ \avg{E}_Y= \frac{|E|}{|Y|}, \quad \avg{f}_{p,F} = \frac{\|f\|_{L^p(F)}}{|F|^{1/p}}, \quad N = \log \frac{ \|f\|_{L^2(X)}}{\|f\|_{L^2(Y)}}, \quad S^* = \left\{ \begin{array}{cl} 1+\log \frac{1}{\avg{E}_Y} & n=1; \\ 1+\avg{E}_Y^{\frac 1n -1} & n \ge 2. \end{array} \right. \]
\end{proposition}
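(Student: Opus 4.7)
The plan is a two-stage reduction following the strategy of \cite{lebeau19}: a Carleman estimate on an intermediate smooth domain will yield \eqref{eq:prop-local} with $E$ replaced by a generic open subset of $Y$ (contributing only the factor $e^{C(N+1)}$), and a Remez-type amplification will then pass from this open set to the measurable $E$ (contributing the factor $e^{CS^*}$). By affine invariance of both sides of \eqref{eq:prop-local} under $D$---and of $\avg{E}_Y$, $N$, $|Y|$, and $\diam Y$---we may take $D$ to be the identity, so that $\diam Y = 1/4$, $|Y|\ge \ell$, and $\dist(\partial Y,\partial X)\ge d$.

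For the first stage, construct an intermediate domain $Z$ with $Y\Subset Z\Subset X$, $\dist(Y,\partial Z)\gtrsim d$, and $\dist(\partial Z,\partial X)\gtrsim d$, belonging to a regular boundary family: concretely, a smoothing of $\{w:\dist(w,Y)<d/3\}$, which satisfies a uniform exterior sphere condition with parameter depending only on $d$. Take the Carleman weight to be $\phi=G_Z(\cdot, y_0)$ for a fixed $y_0\in Y$; Proposition \ref{prop:green} then gives a uniform lower bound on $\phi$ on compact subsets of $Y$ away from $y_0$ and uniform decay of $\phi$ near $\partial Z$, with constants depending only on $d$ and $\ell$. Since $\dbar f = 0$ componentwise, the real coordinates of $f$ are harmonic, so the standard $L^2$ Carleman estimate on $Z$ with weight $e^{2\tau\phi}$ applies. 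Testing against $\chi f$ for a cutoff $\chi$ equal to $1$ on $Y$ and vanishing near $\partial Z$ and on a prescribed open set $Y^\circ\subset Y$, then optimizing over $\tau$, produces
\[ \|f\|_{L^2(Y)}\le C e^{C(N+1)}\|f\|_{L^2(Y^\circ)}, \]
where the $N$ enters when bounding the commutator remainder on $Z\setminus Y$ by $\|f\|_{L^2(X)}$.

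In the second stage, reduce from this generic $Y^\circ$ to the measurable set $E$ with $\avg{E}_Y=\alpha$. Partition $Y$ into a grid of cubes and pigeonhole to produce a single cube $Q$ with $|E\cap Q|\gtrsim \alpha|Q|$. When $n=1$, a one-dimensional polynomial Remez inequality applied to truncations of $f$ on real slices of $Q$ yields the sharp factor $\log(1/\alpha)$. When $n\ge 2$, iterate one-dimensional Remez inequalities along $n$ successive complex slices through $Q$; a Fubini-type argument controls the density of $E$ on each slice, and the losses compound to the polynomial factor $\alpha^{1/n-1}$. In either case, combining the resulting local bound on $Q$ with the first-stage estimate (applied at a sub-ball of $Q$ whose image plays the role of $Y^\circ$) yields \eqref{eq:prop-local}.

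The main obstacle is uniformity of all constants over the admissible family of pairs $(X,Y)$, since $\partial X$ is only assumed measurable and may be arbitrarily irregular. This is precisely what motivates the introduction of the auxiliary domain $Z$: all geometric hypotheses on $(X,Y)$ are routed through the regular-boundary-family property of $Z$ and then through Proposition \ref{prop:green} to yield Carleman-weight bounds independent of the particular $X$. A secondary technical point, and the source of the dichotomy in $S^*$, is that no sharp logarithmic Remez bound is available in complex dimension $n\ge 2$; the iterated slicing unavoidably introduces polynomial-in-$\alpha$ degradation, and verifying the exponent $1/n-1$ requires careful bookkeeping across the nested slices.
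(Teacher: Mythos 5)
Your two-stage plan is not the route the paper takes, and more importantly the factorization you propose cannot produce the stated bound.

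The paper's proof is a single Carleman estimate where the weight $\phi$ is built directly from the set $E$: one sets $\phi = \phi_1 - \rho\phi_2$ with $-\Delta\phi_1 = \chi_E/\langle E\rangle_Y$ and $-\Delta\phi_2 = \chi_Y$ on the exterior-sphere domain $Z$, with Dirichlet boundary conditions. The $\dbar_k$ Carleman identity (not harmonicity of the real components) then produces, after summing over $k$ and cutting off on $Z_s'$,
\[
Che^{\delta/h}\int_X |f|^2 + \langle E\rangle_Y^{-1}e^{2S/h}\int_E|f|^2 \ \ge\ \rho\, e^{2\delta/h}\int_Y|f|^2,
\]
where $\delta = \tfrac12\inf_Y\phi$ is controlled from below by Proposition \ref{prop:green} and $S=\sup_Y\phi$ is controlled from above by the fundamental solution: maximizing $\Gamma*\tilde\chi_1$ over rearrangements gives precisely $S\lesssim S^*$. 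Optimizing in $h$ then delivers the $(N+1)S^*$ in the exponent in one step. The $S^*$ term thus emerges from the size of the Carleman weight, not from a separate Remez inequality, and the dichotomy at $n=1$ versus $n\ge 2$ is simply the dichotomy in the growth of $\Gamma$.

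Your split into a factor $e^{C(N+1)}$ from the Carleman stage and an independent factor $e^{CS^*}$ from a Remez stage cannot be correct: the required bound has $(N+1)S^*$ as a \emph{product} in the exponent, and $e^{CN}e^{CS^*}=e^{C(N+S^*)}$ is strictly stronger than $e^{CNS^*}$ when both are large, so it is false. A monomial $f(z)=z^M$ on $Y=\{|z|<1\}$, $X=\{|z|<2\}$, $E=\{|z|<\alpha^{1/2}\}$ (density $\alpha$) already shows the ratio $\|f\|_{L^2(Y)}/\|f\|_{L^2(E)}$ grows like $e^{cM\log(1/\alpha)}\sim e^{cNS^*}$, which overwhelms any $e^{C(N+S^*)}$ bound. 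If you attempt to repair this by tracking the doubling index through the Remez stage, you are implicitly re-deriving the $NS^*$ coupling, at which point the two-stage split offers no gain over the paper's one-step argument. Separately, the cutoff you describe ("equal to $1$ on $Y$ and vanishing $\ldots$ on a prescribed open set $Y^\circ\subset Y$") is contradictory, and the weight $\phi=G_Z(\cdot,y_0)$ is independent of $E$, so it cannot by itself detect where $E$ sits inside $Y$; this is exactly the information the paper's Poisson-equation construction builds into $\phi$.
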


Let us also clarify that by affine, we mean some combination of translation, dilation, and rotation of the $n$ complex variables $(z_1,z_2,\ldots,z_n)$. By the invariance of the conclusion under such affine changes of variables, we can assume $X$ and $Y$ themselves satisfy (i)--(iii). As long as $d < 1/4$, we can also assume that $\diam X \le \tfrac 12$ by replacing $X$ with $X$ intersected with a ball of radius $\tfrac 12$. If $d > \frac 14$ then a much simpler proof can be given which uses explicit estimates on the fundamental solution $\Gamma$ instead of the Green functions (we will not consider this case in the proof). Let us give the proof of Proposition \ref{prop:local}, assuming Proposition \ref{prop:green} for the time being. 

\subsection{Carleman Estimate}\label{subsec:carleman}
Recall that a holomorphic function $f:\C^n \to \C$ is holomorphic in each complex variable and therefore it satisfies the $\dbar$ equation in each variable. Denote elements in $\C^n$ by $z=(z_1,\ldots,z_n)=(x_1,x_2,\ldots,x_{2n}) \in \R^{2n}$, where $z_k=x_{2k-1}+ix_{2k}$ for $1 \leq k \leq n$, and by $\dbar_k$ the $k$-th component of the $\dbar$ operator,
	\[ \dbar_k = \frac 12 \left(\frac{\partial}{\partial x_{2k-1}} + i\frac{\partial}{\partial x_{2k}}\right). \]
We have that $\dbar_k f =0$ for each $k$. We will first prove a Carleman estimate for each $\dbar_k$.

Fix $Z \subset \C^n$ and $\phi:Z \to \mathbb R$ with $\phi, \Delta \phi \in L^\infty$. For a parameter $h>0$, define the differential operator on $u \in C^\infty_0(Z)$, $P_h u = e^{\phi/h} \frac{2h}{i} \dbar_k (e^{-\phi/h} u)$. Then, we can compute
	\begin{align*} P_h &= (-\phi_{x_{2k}} + h\partial_{x_{2k}})-i(-\phi_{x_{2k-1}} + h\partial_{x_{2k-1}}) \\
			&= (\frac{h}{i}\partial_{x_{2k-1}}-\phi_{x_{2k}}) +i(\frac{h}{i}\partial_{x_{2k}}+\phi_{x_{2k-1}})\\
			&=:A+iB.
	\end{align*}
 Integration by parts yields
	\[ \|P_h u \|^2 = \|A u \|^2 + \|B u \|^2 + i\ip{[A,B] u}{u}, \]
 where the norms and inner product in the above display are those of $L^2(Z).$ 

It is simple to compute $[A,B] = -ih(\partial^2_{x_{2k-1}}\phi + \partial^2_{x_{2k}}\phi)$. Taking {$u=e^{\phi/h}g$} for some $g \in C^\infty_0$, we obtain
	\[ 4h^2\int_Z e^{2\phi/h} |\dbar_k g|^2 \ge h \int_Z e^{2 \phi/h}|g|^2 (\partial^2_{x_{2k-1}}\phi + \partial^2_{x_{2k}}\phi). \]
Summing over all $k$ and picking $g=\psi f$ where $f$ is holomorphic in $Z$ and $\psi \in C_0^\infty(Z)$ with $\psi = 1$ on $\bar Y \subset Z$, we obtain, with $M=\sum_{k=1}^n \|\dbar_k \psi\|_\infty^2$,
	\[ 4Mh^2 \int_{Z \backslash \{\psi=1\}} e^{2\phi/h} |f|^2 \ge h \int_Y e^{2 \phi/h}|f|^2 \Delta \phi. \]
We used the fact  that $\dbar_k (\psi f) = (\dbar_k \psi) f$ since $f$ is holomorphic.

\subsection{Choice of $Z$ and $\phi$}
Cover $\partial X$ with a collection $\mathcal B$ of open balls centered on $\partial X$ with radius $d/2$. Set
	\[ Z = X \, \backslash \bigcup_{B \in \mathcal B} B. \]
$Z$ belongs to the regular boundary family
	\[ \mathcal \X_{d/4} := \{ \X : \X \mbox{ satisfies the $d/4$-uniform exterior sphere condition}\}. \]
Indeed, if $q \in \partial Z$, then $q \in \partial B_q$ for some $B_q$. However, we can fit a ball $B^*_q$ of radius $d/4$ inside $B_q$ such that $\bar B^*_q \cap B^c_q = \{ q\}$. Therefore,
	\[ \{q\} \subset \bar B^*_q \cap \bar Z = \bar B^*_q \cap X \cap_{B \in \mathcal B} B^c \subset \bar B^*_q \cap B_q^c = \{q\}. \]

We are ready to construct $\phi$. Set $\phi = \phi_1-\rho \phi_2$ where for $j=1,2$,
	\[ \left\{\begin{array}{rcll}  -\Delta \phi_j &=& \tilde \chi_j &\mbox{in } Z; \\
							\phi_j &=& 0 &\mbox{on } \partial Z; \end{array} \right. \tilde\chi_1=\frac{\chi_E}{\avg{E}_Y}, \quad \tilde\chi_2=\chi_Y.\]
On the face of it, $\phi$ only satisfies the regularity assumption $\Delta \phi \in L^\infty$. However, we will obtain precise estimates on $\phi$ subsequently which easily imply $\phi \in L^\infty$.
In order to utilize the exponential weights in the Carleman estimate, we want $\phi$ to be smaller on $Z \backslash \{\psi=1\}$ than on $Y$. 

First, since $\phi_j(x) = \int_{Y} G_Z(x,y)\tilde \chi_j(y) \, dy$, by Proposition \ref{prop:green} and assumption (ii), there exists $c_1$ depending only $d$ such that
	\[ \inf_Y \phi_1 = c_1>0.\]
Furthermore, since $Z \subset X \subset B(z,1/2)$ for some $z \in \mathbb{C}^n$ (recall we are assuming that $\diam X \le \frac{1}{2}$), the fundamental solution, $\Gamma$, is positive on the boundary, so by the maximum principle $\phi_j \le \Gamma * \tilde \chi_j$. This upper bound is maximized when $E$ and $Y$ are balls in which case the integral can be estimated above by
	\begin{equation} \sup_Y \phi_1 \le C\left\{ \begin{array}{cl} 1+\log \frac{1}{\avg{E}_Y} & n=1 \\ \avg{E}_Y^{\frac 1n -1} & n \ge 2 \end{array} \right. , \quad \sup_Y \phi_2 \le C\label{eq:s-star}\end{equation}
for some $C$ depending on $n$. Therefore, picking $\rho=c_1/2C$, $\inf_Y \phi \ge c_1/2$.
Using the second property of the Green function in Proposition \ref{prop:green}, there exists $s$ (depending only on $d$ and $c_1$) such that $\phi(x) \le c_1/4$ if $x \in Z_s$.

Pick the cutoff $\psi \in C_0^\infty(Z)$ from above to be equal to 1 on $Z_s'$. Therefore we obtain, for $S = \sup_{Y} \phi$ and $\delta=c_1/2$,
	\begin{equation}\label{eq:star} Ch e^{\delta /h} \int_X |f|^2 + \avg{E}_Y^{-1} e^{2S/h} \int_E |f|^2 \ge \rho e^{2\delta/h} \int_Y |f|^2. \end{equation}
This can be turned into the desired product form using a now standard trick of optimizing in $h$.
For ease of notation, set 
	\[ \nu = 2(S-\delta), \quad \theta = \frac{\delta}{\nu +\delta}, \]
	\[ A=\int_X |f|^2, \quad B= \avg{E}_Y^{-1}\int_E |f|^2.\]
$\nu$ is positive which can be seen from the definitions of $S$ and $\delta$.
(\ref{eq:star}) reads
	\[ C he^{-\delta/h}A + e^{\nu/h} B \ge \rho \int_Y |f|^2.\]
Set $G(h) = (Ch)^{-1}\exp(\frac{\nu+\delta}{h})$. $G$ is decreasing and takes all values in $(0,\infty)$. Assume $B \ne 0$ and pick $h_0$ such that $G(h_0)=\frac AB$. If $h_0 \ge 1$, then
	\[ \int_Y |f|^2 \le A = G(h_0)^\theta B^\theta A^{1-\theta} \le G(1)^\theta B^\theta A^{1-\theta} = C^{-\theta} e^\delta B^\theta A^{1-\theta}. \]
On the other hand, if $h_0 < 1$, then we have
	\[ B\exp(\nu /h_0)= \exp(-\delta/h_0)Ch_0 A \le \exp(-\delta/h_0)CA. \]
Therefore,
	\begin{align*} \rho \int_Y |f|^2 &\le e^{\nu/h_0} B + Ch_0e^{-\delta/h_0}A \\
			&= 2\exp(\nu/h_0) B \\
		&\le 2 \exp(\nu/h_0)^\theta B^\theta (\exp(-\delta/h_0)CA)^{1-\theta} \\
		&= 2C^{1-\theta}B^\theta A^{1-\theta}
	\end{align*}
verifying in the last line that $\nu\theta-\delta+\delta\theta = 0$. Elementary manipulations yield the final form of the inequality (\ref{eq:prop-local}).

\subsection{Green function estimates}
Let us now prove Proposition \ref{prop:green} on the convergence and quantitative positivity of the Green functions. We follow the classical textbook of Gilbarg and Trudinger \cite[pp. 25-27]{gilbarg2001elliptic}.

We begin by proving the convergence to zero. Fix $\eta,\ep>0$. $\Gamma$ is uniformly continuous away from zero. Therefore, there exists $0 <\delta<\eta$ such that
	\[ |\Gamma(x-y)-\Gamma(x-y')| \le \ep/3 \]
for all $x \in \X_\eta'$ and $y,y' \in \X_{\eta/2}$ with $|y-y'| \le \delta$. By (\ref{eq:barrier-pos}) pick $M$ (depending on $\eta$, $\delta$, and $\mathcal \X$) large enough that for all $p \in \partial \X$,
	\[ M \inf_{|y-p|>\delta}\omega_p^Z(y) \ge 2\sup_{x \in \X_\eta'} \sup_{y' \in \X_{\eta/2}} |\Gamma(x-y')|\]
 Indeed, the right-hand side is an increasing function of $\eta^{-1}$ due to the radially decreasing nature of $\Gamma$. On the other hand, the infimum is a positive number depending only on $\delta$ and the regular boundary family $\mathcal Z$. Next,  for each $x \in \X_\eta'$, consider the two functions $h_1(y) =\Gamma(x-p)-\ep/3-M \omega^Z_p(y)$ and $h_2(y) = \Gamma(x-p)+\ep/3+M\omega_p^Z(y)$. On the boundary $h_1 \le H_\X(x,\cdot) \le h_2$. Indeed, if $|p-q| \le \delta$ then
	\[ |H_\X(x,q)-\Gamma(x-p)|=|\Gamma(x-q)-\Gamma(x-p)| \le \ep/3,\]
and if $|p-q| > \delta$, then by our choice of $M$, $|H_\X(x,q)-\Gamma(x-p)| \le M \omega_p^Z(q)$. Furthermore, $h_1$ is subharmonic and $h_2$ is superharmonic so by the maximum/minimum principles, $h_1 \le H_\X(x,\cdot) \le h_2$ on all of $\X$. Finally, by property (\ref{eq:barrier-zero}) of the regular boundary family, there exists $\delta'>0$ such that $\omega_p^Z(y) \le \ep/(3M)$ if $|y-p| \le \delta'$. Therefore, if $y \in \X_{\min\{\delta,\delta'\}}$,
	\[ \begin{aligned} |H_\X(x,y)-\Gamma(x-y)| &\le |H_\X(x,y)-\Gamma(x-\pi(y))| \\
		&\qquad+ |\Gamma(x-\pi(y)) - \Gamma(x-y)| \\
		&\le \ep/3 + M \omega_{\pi(y)}^Z(y) + \ep/3 \\ &\le \ep. \end{aligned} \]

On the other hand, the positivity of $G_\X$ is controlled by the distance between $H_\X$ and $\Gamma$ near the boundary. Fixing $x \in \X_\eta'$, let $\pi(x)$ be a closest point to $x$ in $\partial \X$. This guarantees $\Gamma(x-\pi(x)) \ge \Gamma(x-p)$ for all $p \in \partial \X$. The line segment $(1-t)x+t\pi(x)$, $t \in (0,1)$ is contained in $\X$ (if it were not contained in $\X$, then $\pi(x)$ would not be a closest boundary point). Define $y_0=\frac{x+\pi(x)}{2}$ so that $d(y_0,\partial \X) = \frac 12 d(x,\partial \X)$. There exists $\ep_0 \in (0,1)$  depending only on $\eta$  such that
	\[ \Gamma(x-\pi(x)) \le \ep_0 \Gamma(x-y_0). \]
Set
	\[ \theta = \frac{(1-\ep_0)\Gamma(x-y_0)}{2M }, \quad M=\sup_{y,p \in \bar \X}\omega_p^Z(y),\]
so that $h_3(y):=\theta \omega_{\pi(x)}^Z(y)+\Gamma(x-\pi(x)) \le \frac{1+\ep_0}{2}\Gamma(x-y_0)$ for all $y \in \X$. Comparing the boundary values, for any $p \in \partial \X$,
	\[ h_3(p) \ge \Gamma(x-\pi(x)) \ge \Gamma(x-p) = H_\X(x,p). \]
Applying the maximum principle again, this inequality extends to the interior and 
	\[ G_\X(x,y_0) = \Gamma(x-y_0)-H_\X(x,y_0) \ge \Gamma(x-y_0)-h_3(y_0) \ge \frac{1-\ep_0}{2}\Gamma(x-y_0). \]
Finally, by the minimum principle and Harnack's inequality, 
	\[ \inf_{y \in \X_\eta'}G_\X(x,y) \ge \inf_{d(y,\partial \X)=\eta/2} G_\X(x,y) \gtrsim G_\X(x,y_0) \gtrsim 1\]
and the implicit constants only depend on $\eta$ and $M$. Most importantly, they are independent of $x$ and $\X$, which proves the proposition.

\section{Application to Pseudoconvex Domains}\label{sec:pseudo}

On the face of it, one could actually decompose a domain $\Omega$ in a variety of ways and apply the results of the previous sections (e.g. a Whitney decomposition yields Theorem \ref{thm:lower-dim-euclid}---see Section \ref{sec:lower-dim}). However, to obtain a characterization of the dominating sets on pseudoconvex domains, we decompose into Kobayashi balls due to their relationship to the Bergman kernel. 

\begin{definition}\label{def:pseudo}
Let $\Omega \subset \mathbb{C}^n$ be a domain with smooth defining funciton $\rho.$ That is, $\Omega= \{z:\rho(z)<0\}$ and $\nabla \rho \neq 0$ on $\partial \Omega.$ We say $\Omega$ is \textit{strongly pseudoconvex} if its defining function $\rho$ is strictly plurisubharmonic.

Recall the notation $\Omega_\ep = \{ z \in \Omega : \delta(z) \le \ep \}$ from (\ref{eq:nbhd}). It is well known that for a smoothly bounded strongly pseudoconvex domain, there exists a unique normal projection to the boundary $\pi: \Omega_\ep \rightarrow \partial \Omega$.  More precisely, there exists $\ep>0$ so that if $z \in \Omega_{\ep}$ , then there is a unique point $\pi(z) \in \partial \Omega$  that minimizes the Euclidean distance of $z$ to the boundary. The map $\pi$ possesses reasonable regularity properties; we refer the reader to Lemma 2.1 in \cite{baloghbonk2000} for a more detailed discussion of the normal projection. 

For a pseudoconvex domain $\Omega$, the \textit{Kobayashi distance} can be defined by its infinitessimal Finsler metric \cite{krantz2001function}, 
	\[ \begin{array}{rcl} F_K(z,\xi) = \inf\{ \alpha>0: &\phi \in \Hol(\mathbb D;\Omega), \\
		&\phi(0)=z, \phi'(0) = \xi/\alpha\}, &(z,\xi) \in \Omega \times \C^n \end{array} \]
	\[ \begin{array}{rcl}
		d_\Omega(z,w) = \inf \{ \int_0^1 F_K(\gamma(t),\gamma'(t)) \, dt : &\gamma \in C^1([0,1];\Omega), \\
		& \gamma(0)=z, \, \gamma(1)=w \} &z,w \in \Omega.\end{array} \]
\end{definition}
$d_\Omega$ is also the smallest distance which is bounded below by the Lempert function,
	\[ \begin{array}{rcl} \inf \{ d_{\mathbb D}(\zeta,\omega), &\phi \in \Hol(\Omega;\mathbb D) \\
			&\phi(z)=\zeta, \, \phi(w)=\omega \}& z,w \in \Omega. \end{array} \]
The Kobayashi distance on the disk, $d_{\mathbb D}$, coincides with the usual Poincar\'e distance. The property concerning the Lempert function actually gives an equivalent definition of $d_\Omega$, see \cite{abate1989iteration,abatesaracco2011}. Since $\Omega$ is fixed throughout, we will just write $d(z,w)$ for $d_\Omega(z,w)$. We will denote Euclidean distance by $|\cdot|$ and the Euclidean distance of the point $z$ to $\partial \Omega$ by $\delta(z)$.  Recall that for any fixed choice of defining function $\rho$, we have the equivalence $|\rho(z)| \approx \delta(z)$ for $z$ in a neighborhood of the boundary.

Denote by $Y(w,r)$ the Kobayashi ball centered at $w$ with radius $\tanh^{-1} r$. 
The local results of Section \ref{sec:local} can be strengthened in the context of Kobayashi balls. We will show that for any $R > r>0$, one can take $Y = Y(w,r)$ and $X=Y(w,R)$ in Proposition \ref{prop:local}.

To do so we will need some properties of the Kobayashi balls which we collect here. Denote by $P(w,r_1,r_2)$ the polydisc
	$$P(w,r_1,r_2):= \{z \in \C^n: |z_1-w_1|<r_1,|z_2-w_2|<r_2,\dots, |z_n-w_n|<r_2\}$$

The first lemma concerns the geometric properties of Kobayashi balls, and will allow us to show that the geometric conditions (i)-(iii) from Proposition \ref{prop:local} are satisfied by $Y=Y(w,r)$ and $X=Y(w,R)$ for any $R>r>0$. 
\begin{lemma}\label{lemma:kob-geo}
Let $\Omega$ a be smoothly bounded strongly pseudoconvex domain. There exists $\ep>0$ and functions $a,b,A,B:(0,1) \to (0,\infty)$ such that for all $w \in \Omega_{\ep}$ and $r>0$, 
	$$w+U(w)^{-1} P(0,a(r) \delta(w),b(r)\delta(w)^{1/2}) \subset Y(w,r) \subset w+ U(w)^{-1} P(0,A(r) \delta(w), B(r) \delta(w)^{1/2})  $$
where $U(w)$ is  any  rotation on $n$ complex variables which rotates the complex normal vector at $\pi(w)$ so it lies in the plane $\C \times \{0\} \times \cdots \times \{0\}$. As a consequence,  there exists a function $C:(0,1) \to (0,\infty)$ satisfying 
	\[ C(r)^{-1} \delta(w)^{n+1} \le |Y(w,r)| \le C(r) \delta(w)^{n+1}. \]
\begin{proof}
Choose $\ep>0$ sufficiently small so that for $w \in \Omega_\ep,$ the normal projection $\pi(w)$ is well-defined. This polydisc containment can be found in the references \cite{krantz1988bloch,li1992}. The volume estimate then easily follows. 
\end{proof}
\end{lemma}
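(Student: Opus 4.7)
The plan is to derive both polydisc containments from the boundary behavior of the infinitesimal Kobayashi metric, which on a smoothly bounded strongly pseudoconvex domain scales anisotropically: like $\delta(w)^{-1}$ in the complex normal direction at $\pi(w)$ and like $\delta(w)^{-1/2}$ in the complex tangential directions. Granted such an estimate, integrating along a quasi-geodesic yields the outer polydisc containment, while the inner containment comes from exhibiting explicit holomorphic discs. The volume comparison $|Y(w,r)| \asymp \delta(w)^{n+1}$ then follows because a polydisc of size $\delta(w)$ in one complex direction and size $\delta(w)^{1/2}$ in the remaining $n-1$ complex directions has $2n$-real volume proportional to $\delta(w)^2 \cdot \delta(w)^{n-1} = \delta(w)^{n+1}$.

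First I would fix $p = \pi(w)$, take $U(w)$ to be a unitary rotation sending the complex normal at $p$ to the $z_1$-axis, and translate so that $p$ is the origin. By the classical normal form for a strongly pseudoconvex boundary, in these coordinates the defining function reads $\rho(z) = -2\operatorname{Re} z_1 + L(z,\bar z) + O(|z|^3)$ with $L$ a positive-definite Hermitian form, and $w$ lies approximately at $(\delta(w)/2, 0, \ldots, 0)$. Strong pseudoconvexity allows one to inscribe an inner Euclidean ball and circumscribe an outer model domain (e.g., a Siegel half-space) at $p$ on comparable scales; since the Kobayashi metric is monotone under inclusion, matching upper and lower bounds then follow from explicit computations on these model domains.

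The central analytic input I would invoke is the boundary estimate (originating with Graham, made quantitative by Aladro, Li, and others, and recorded in \cite{krantz1988bloch,li1992,abatesaracco2011})
\begin{equation*}
F_K(w,\xi) \asymp \frac{|\xi_N|}{\delta(w)} + \frac{|\xi_T|}{\delta(w)^{1/2}},
\end{equation*}
where $\xi_N$ and $\xi_T$ are the complex normal and tangential components at $\pi(w)$ and the constants are uniform for $w \in \Omega_\ep$. Given this, for any curve $\gamma$ joining $w$ to $z$ of Kobayashi length less than $\tanh^{-1}r$, direct integration bounds $|z_1 - w_1|$ by $A(r)\delta(w)$ and $|z_k - w_k|$ by $B(r)\delta(w)^{1/2}$ for $k \ge 2$, producing the outer containment. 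For the inner containment, a product of explicit holomorphic discs---a disc in the normal direction of radius $a(r)\delta(w)$ and tangential discs of radius $b(r)\delta(w)^{1/2}$---certifies that every point of the inner polydisc lies within Kobayashi distance $\tanh^{-1}r$ of $w$.

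The main obstacle is ensuring that $a(r), b(r), A(r), B(r)$ can be taken uniform as $w$ varies over $\Omega_\ep$. This relies on the compactness of $\partial\Omega$ together with the smoothness of the normal projection $\pi$ (cf.\ Lemma 2.1 of \cite{baloghbonk2000}), which let us choose the local frames $U(w)$ depending smoothly on $w$ with uniformly bounded inverses; as a result, the constants in the normal form of $\rho$ and hence in the Kobayashi metric estimate can be taken independent of $w$. With both containments in hand, the volume bound is immediate from the polydisc volume formula.
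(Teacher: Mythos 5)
Your outline is essentially the standard proof, and in fact it is what the references \cite{krantz1988bloch,li1992} that the paper cites actually carry out; the paper itself simply delegates the polydisc containment to those references and notes that the volume bound follows. The one place worth tightening is the inner containment: a ``product of explicit holomorphic discs'' is not itself a holomorphic disc, so what you really want is to inscribe a Euclidean polydisc $P = w + U(w)^{-1}P(0,c\delta(w),c'\delta(w)^{1/2}) \subset \Omega$ (possible by strong pseudoconvexity and the normal form for $\rho$) and then use the explicit formula $d_P(w,z) = \max_k \tanh^{-1}\bigl(|z_k-w_k|/\rho_k\bigr)$ together with the monotonicity $d_\Omega \le d_P$; with $a(r)=rc$, $b(r)=rc'$ this gives $d_\Omega(w,z) < \tanh^{-1}r$ on the inner polydisc. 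For the outer containment, note that integrating the lower bound on $F_K$ requires both a two-sided comparison of $\delta(\gamma(t))$ with $\delta(w)$ along short Kobayashi curves (Lemma~\ref{lemma:kobbounddist}) and control on how the splitting into normal/tangential components at $\pi(\gamma(t))$ drifts from the fixed frame at $\pi(w)$; your appeal to compactness of $\partial\Omega$ and smooth dependence of the frame handles this, but it is the step most worth spelling out.
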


We have another useful lemma concerning the Kobayashi metric and distance to the boundary. 

\begin{lemma}\label{lemma:kobbounddist}
Let $\Omega$ a be smoothly bounded strongly pseudoconvex domain. There exists a function $D:(0,1) \to (0,\infty)$ so that if $d(z,w) \le r$, then 
	\[D(r)^{-1} \delta(w) \le \delta(z) \le D(r) \delta(w). \]
\end{lemma}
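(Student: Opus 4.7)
The plan is to reduce to a small-radius comparison and then iterate. The key tool will be the polydisc sandwich from Lemma~\ref{lemma:kob-geo}, which controls the displacement $z-w$ for $w$ near the boundary; combined with a second-order Taylor expansion of a smooth defining function $\rho$ (recall $\delta\approx|\rho|$ near $\partial\Omega$), this should yield a quantitative bound on $|\rho(z)-\rho(w)|/\delta(w)$ that tends to zero as $r\to 0$. I would split the argument into a near-boundary regime and an interior regime, and handle general $r\in(0,1)$ by iteration along a Kobayashi chain.

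First, for $w\in\Omega_{\ep}$ (with $\ep$ from Lemma~\ref{lemma:kob-geo}) and $z\in Y(w,r)$, the containment there writes $U(w)(z-w)=(\zeta_1,\ldots,\zeta_n)$ with $|\zeta_1|\le A(r)\delta(w)$ and $|\zeta_j|\le B(r)\delta(w)^{1/2}$ for $j\ge 2$. Since $U(w)^{-1}$ sends the $z_1$-axis to the complex normal direction at $\pi(w)$ and $\nabla\rho$ is smooth and non-vanishing on the compact set $\partial\Omega$, the quantity $|\partial_{z_1}(\rho\circ U(w)^{-1})(w)|$ is bounded above and away from zero uniformly in $w\in\Omega_\ep$. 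A second-order Taylor expansion then yields
\[ |\rho(z)-\rho(w)|\le CA(r)\delta(w)+C\bigl(A(r)^2\delta(w)^2+(n-1)B(r)^2\delta(w)\bigr)\le \tilde C(r)\,\delta(w), \]
where the $\delta(w)^2$ term is absorbed via $\delta(w)\le\ep\le 1$. Passing from $\rho$ back to $\delta$ (using the uniform equivalence $|\rho|\approx\delta$ near $\partial\Omega$) gives $\delta(z)\le(1+\tilde C(r))\delta(w)$; for sufficiently small $r_0$ so that $\tilde C(r_0)\le 1/2$, the same estimate produces the reverse bound $\delta(z)\ge\delta(w)/2$.

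For $w$ in the interior ($\delta(w)\ge\ep$), I would invoke the standard fact that Kobayashi balls of fixed radius about points uniformly separated from $\partial\Omega$ lie in a compact subset of $\Omega$ (a consequence of Kobayashi-completeness of bounded strongly pseudoconvex domains). Hence $\delta(z)$ and $\delta(w)$ both lie in a compact subinterval of $(0,\mathrm{diam}\,\Omega]$ depending only on $r$ and $\ep$, and the ratio is controlled. For general $r\in(0,1)$, I would join $w$ to $z$ by a chain of points with consecutive Kobayashi distances at most $r_0$ (possible since $d$ is a length metric) and apply the small-radius comparison at most $N\lesssim r/r_0$ times.

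The main obstacle is uniformity of the constants over $w$ in the near-boundary regime; this is built into Lemma~\ref{lemma:kob-geo} and the uniform bounds on $\rho$ and its derivatives near the compact set $\partial\Omega$. A secondary subtlety in the iteration is that intermediate points in the chain may migrate between the two regimes, but since the comparison at scale $r_0$ has a uniform constant in each regime, composing the two kinds of steps produces an $N$-fold product bound of the form $D(r)\lesssim 2^{N}$, which is acceptable.
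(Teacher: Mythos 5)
The paper does not supply its own argument here---it simply cites Abate--Saracco \cite{abatesaracco2011}, where the estimate is obtained from a direct lower bound of the form $d(z,w)\ge \tfrac12\bigl|\log\tfrac{\delta(z)}{\delta(w)}\bigr|-C$ (a Balogh--Bonk-type inequality for the Kobayashi distance), which immediately gives $\delta(z)/\delta(w)\in[e^{-2(r+C)},e^{2(r+C)}]$ with no need for chaining. Your route is different and self-contained: you extract the displacement bound from the upper polydisc containment of Lemma~\ref{lemma:kob-geo}, Taylor-expand the defining function, and chain. This is sound, provided you use the quantitative version of the polydisc sandwich (as in Proposition~\ref{comparablesize}, where $A(r)=A\tanh^{-1}r$ and $B(r)=B\tanh^{-1}r$ both tend to $0$ as $r\to0$); without the fact that $A(r),B(r)\to0$, your $\tilde C(r)$ would not become small and the reverse inequality would fail at the base scale $r_0$. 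The iteration is legitimate because the hypothesis bounds $d(z,w)$ by $r<1$, so the chain has a bounded number of links.

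One step is stated too loosely. From $|\rho(z)-\rho(w)|\le\tilde C(r)\delta(w)$ you cannot conclude $\delta(z)\le(1+\tilde C(r))\delta(w)$ merely from ``$|\rho|\approx\delta$''; that equivalence carries a fixed multiplicative constant $C_1\ge1$, so this passage alone would only give $\delta(z)\le C_1^2(1+\tilde C(r))\delta(w)$, which is not small-$r$ sharp and (more importantly) does not produce a factor tending to $1$, which you rely on for the reverse bound at a small base scale. The fix is easy: near $\partial\Omega$ the function $\delta$ itself is smooth with $\nabla\delta(w)=-n(\pi(w))$ and $|\nabla\delta|\equiv1$, so you can run the Taylor expansion on $\delta$ directly. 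Since $n(\pi(w))$ is exactly the normal at $\pi(w)$, the first-order term pairs only with the normal component of $z-w$ and is $O(A(r)\delta(w))$, and the second-order term is dominated by the tangential-tangential part $O(B(r)^2\delta(w))$, giving $|\delta(z)-\delta(w)|\le C(A(r)+B(r)^2)\delta(w)$ cleanly. (Alternatively one can argue that $|\rho(z)|/\delta(z)\to|\nabla\rho(\pi(z))|$ uniformly, so the conversion constant is $1+o(1)$ for $z,w$ close and near the boundary.) With that repair the argument goes through; the chaining across regimes is handled as you describe.
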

\begin{proof}
This statement is proved in \cite{abatesaracco2011}.
\end{proof}

The other ingredient will allow us to extend from $p=2$ to all values of $1 \le p \le \infty$ and obtain the desired form of the constant, polynomial in $\gamma^{-1}$, in all dimensions, not just $n=1$.

\begin{lemma}\label{lemma:kob-mean}
Let $\Omega$ be a  smoothly bounded  strongly pseudoconvex domain and $R>r>0$. There exists $C>0$ such that for all $1\le p,q \le \infty$, $z \in \Omega$, and $f$ which are holomorphic on $Y(z,R)$,
	\[ \avg{f}_{p,Y(z,r)} \le C\avg{f}_{q,Y(z,R)}. \]
\end{lemma}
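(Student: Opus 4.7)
The plan is to deduce the lemma from a stronger pointwise bound: I will show $\|f\|_{L^\infty(Y(z,r))} \le C \avg{f}_{q,Y(z,R)}$ with $C$ independent of $z$ and $q$, after which $\avg{f}_{p,Y(z,r)} \le \|f\|_{L^\infty(Y(z,r))}$ settles the lemma for every $1 \le p \le \infty$. The pointwise estimate will be obtained from the sub-mean value property of $|f|^q$ applied on a polydisc around each $w \in Y(z,r)$ whose volume is comparable to $|Y(z,R)|$.

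First I would fix an intermediate Kobayashi radius $r' \in (r,R)$ guaranteeing $Y(w,r') \subset Y(z,R)$ whenever $w \in Y(z,r)$, using the triangle inequality for the Kobayashi distance; the concrete choice $r' = \tanh(\tanh^{-1}R - \tanh^{-1}r)$ works. Next, using Lemma \ref{lemma:kob-geo}, I would inscribe in $Y(w,r')$ a polydisc $P_w$ (after the unitary rotation of coordinates at $\pi(w)$ given there) of dimensions roughly $\delta(w) \times \delta(w)^{1/2} \times \cdots \times \delta(w)^{1/2}$ and volume $\gtrsim \delta(w)^{n+1}$. Combining Lemma \ref{lemma:kobbounddist}, which yields $\delta(w) \sim \delta(z)$ on $Y(z,r)$, with the volume statement of Lemma \ref{lemma:kob-geo} applied to $Y(z,R)$, one gets $|P_w| \sim |Y(z,R)|$, with constants depending only on $r, R$ and $\Omega$.

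The sub-mean value inequality for the plurisubharmonic function $|f|^q$ on the polydisc $P_w \subset Y(z,R)$ (a consequence of the iterated Cauchy integral formula, valid since unitary rotations preserve both holomorphy and volume) now gives
\[
|f(w)|^q \le \frac{1}{|P_w|}\int_{P_w}|f|^q\,dA \le \frac{C}{|Y(z,R)|}\int_{Y(z,R)}|f|^q\,dA = C\,\avg{f}_{q,Y(z,R)}^q,
\]
so $|f(w)| \le C^{1/q}\avg{f}_{q,Y(z,R)} \le \max(C,1)\,\avg{f}_{q,Y(z,R)}$ uniformly in $q \ge 1$ and in $w \in Y(z,r)$. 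Taking the supremum in $w$ and then comparing $L^p$ averages to the sup-norm finishes the proof; the boundary case $q=\infty$ is immediate, and the case $p = \infty$ is exactly what we just proved.

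The main technicality I expect is uniformity in $z$: Lemma \ref{lemma:kob-geo} is stated only for $w \in \Omega_\varepsilon$, so points $z$ lying in the fixed compact subset $\Omega \setminus \Omega_\varepsilon$ need a separate (easier) treatment. On this compact interior region, $\delta$ is bounded above and below uniformly and the Kobayashi metric is equivalent to the Euclidean one, so a standard Euclidean sub-mean value on a ball of radius comparable to $\tanh^{-1}R - \tanh^{-1}r$ delivers the same pointwise estimate. Patching the two regimes produces a single constant $C$ depending only on $r$, $R$, and $\Omega$, as required.
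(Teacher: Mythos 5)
Your proof is correct and follows essentially the same route as the paper's: a pointwise estimate $|f(w)|\lesssim\avg{f}_{q,Y(z,R)}$ uniform in $w\in Y(z,r)$, obtained from a sub-mean value inequality on a domain inside $Y(z,R)$ of volume comparable to $|Y(z,R)|$, followed by comparing the $L^p$-average with the $L^\infty$-norm. The only difference is that the paper directly cites the Kobayashi-ball mean value property from Abate--Saracco (Cor.\ 1.7) and then uses H\"older to pass from exponent $1$ to $q$, whereas you reconstruct the mean value estimate from scratch by inscribing a polydisc of comparable volume (via Lemmas \ref{lemma:kob-geo} and \ref{lemma:kobbounddist}) and applying the polydisc sub-mean value inequality to the plurisubharmonic function $|f|^q$; this is more self-contained but forces you to patch the compact interior separately, a step the cited reference already handles for the whole domain. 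Both approaches are sound, and your extra observation that $C^{1/q}\le\max\{C,1\}$ keeps the constant uniform in $q$, which the paper achieves implicitly through its H\"older step.
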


\begin{proof}
First we use the mean value property of the Kobayashi balls from \cite[Cor. 1.7]{abatesaracco2011}: For each $s>0$, there exists $C>0$ such that
	\[ |f(w)| \le C \avg{f}_{1,Y(w,s)} \]
for all $w \in \Omega$. For $R>r>0$, the triangle inequality shows that $Y(w,R-r) \subset Y(z,R)$ for all $w \in Y(z,r)$. Applying the mean value property with $s=R-r$ and H\"older's inequality,
	\[ \avg{f}_{p,Y(z,r)} \le C \sup_{w \in Y(z,r)} \avg{f}_{q,Y(w,R-r)} \le \sup_{w \in Y(z,r)} \frac{C \|f\|_{L^q(Y(z,R))}}{|Y(w,R-r)|^{1/q}} . \]
However, $|Y(w,R-r)|$ and $|Y(z,R)|$ are only off by a constant depending on $R$ and $r$ by Lemma \ref{lemma:kob-geo}. 
\end{proof}

Now we are ready to prove our first local estimate for Kobayashi balls. The lower dimensional one will be proved later, in Section \ref{sec:lower-dim}. We introduce the doubling index
	\begin{equation}\label{eq:N} N_p(f,z,r,R) = \log\frac{\|f\|_{L^p(Y(z,R))}}{\|f\|_{L^p(Y(z,r))}}. \end{equation}
When any of the parameters $p,f,z,r,R$ are apparent, we will drop them from the notation. 
\begin{lemma}\label{lemma:local}
Let $\Omega \subset \C^n$ be a smoothly bounded strongly pseudoconvex domain, $R>r>0$, and $1 \le p \le \infty$. There exists $C,\ep_0>0$ such that  for all $\gamma>0,$  $z \in \Omega_{\ep_0}$ and $E \subset Y(z,r)$ satisfying $\avg{E}_{Y(z,r)} \ge \gamma$,
	\begin{equation}\label{eq:remez} \|f\|_{L^p(Y(z,r))} \le \left( \frac{C}{\gamma}  \right)^{ C(N+1) } \|f\|_{L^p(E)}, \quad N=N_p(f,z,r,R),\end{equation}
for all $f \in A^p(Y(z,R))$.
\end{lemma}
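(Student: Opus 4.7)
The plan is to reduce the Kobayashi-ball statement to a Euclidean polydisc estimate via Lemma~\ref{lemma:kob-geo}, apply Proposition~\ref{prop:local} (iterating coordinate-by-coordinate when $n\ge 2$), and convert between $L^2$ and general $L^p$ norms using Lemma~\ref{lemma:kob-mean}.

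First I would choose $\epsilon_0>0$ small enough that the polydisc sandwich of Lemma~\ref{lemma:kob-geo} holds throughout $\Omega_{\epsilon_0}$. For $z\in\Omega_{\epsilon_0}$, compose a translation sending $z$ to the origin, the rotation $U(z)$ from Lemma~\ref{lemma:kob-geo}, and the anisotropic dilation scaling $z_1$ by $\delta(z)^{-1}$ and $z_k$ by $\delta(z)^{-1/2}$ for $k\ge 2$. Under this affine map $D$, Lemma~\ref{lemma:kob-geo} places $D(Y(z,r))$ between two polydiscs whose sizes depend only on $r$, and $D(Y(z,R))$ between two larger polydiscs whose sizes depend only on $R$. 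One may therefore choose sets $\tilde Y\subset D(Y(z,r))\subset\tilde X\subset D(Y(z,R))$ satisfying the hypotheses (i)--(iii) of Proposition~\ref{prop:local} with $\diam\tilde Y=1/4$, $|\tilde Y|\ge\ell$, and $\dist(\partial\tilde X,\partial\tilde Y)\ge d$ for constants $\ell,d>0$ depending only on $r$ and $R$.

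For $n=1$, Proposition~\ref{prop:local} applied with $p=2$ gives the lemma directly: there $S^*=1+\log(1/\gamma)$, so the exponential $e^{C(N+1)S^*}$ together with the prefactor $\gamma^{-1/2}$ yields exactly $(C/\gamma)^{C(N+1)}\|f\|_{L^2(E)}$ after inflating $C$. Lemma~\ref{lemma:kob-mean} then converts the $L^2$ bound to any $L^p$ bound by passing through a slightly smaller Kobayashi ball $Y(z,r')$ with $r<r'<R$, inflating the doubling index by a fixed constant. For $n\ge 2$, Proposition~\ref{prop:local} alone gives only the weaker exponent $\gamma^{-(n-1)/n}$ in place of $\log(1/\gamma)$, so the one-dimensional inequality must be iterated over coordinates via Fubini. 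Slicing $\tilde Y=D_1\times Y'$, one extracts a set $A_1\subset D_1$ with $|A_1|/|D_1|\ge\gamma/2$ on which $|E_{z_1}|/|Y'|\ge\gamma/2$; the $(n-1)$-dimensional inductive hypothesis controls $\|f(z_1,\cdot)\|_{L^p(Y')}$ on each such slice, and a 1-D Remez estimate applied to a suitable holomorphic slice $f(\cdot,z^*)$ with $z^*\in Y'$ propagates the bound from $A_1$ to $D_1$. The sliced doubling indices are bounded uniformly by the global $N$ via the sub-mean-value of Lemma~\ref{lemma:kob-mean}, and the constants aggregate to $(C/\gamma)^{C(N+1)}$.

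The main obstacle is the $n\ge 2$ iteration: propagating the 1-D Remez estimate from the good slice set $A_1$ to the entire first-coordinate disc $D_1$ requires a Remez-type inequality for the log-subharmonic sliced norm $z_1\mapsto\|f(z_1,\cdot)\|_{L^p(Y')}$, which in turn follows from another application of Proposition~\ref{prop:local} to a cleverly chosen holomorphic slice of $f$. Uniform control of the sliced doubling indices then follows routinely from the sub-mean-value estimate of Lemma~\ref{lemma:kob-mean}, so no tools beyond the 1-D case of Proposition~\ref{prop:local} are required.
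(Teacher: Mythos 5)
Your overall framework is the same as the paper's: rescale by the anisotropic affine map built from the polydisc sandwich of Lemma~\ref{lemma:kob-geo}, verify (i)--(iii) of Proposition~\ref{prop:local}, apply it, and pass between $L^p$ norms via Lemma~\ref{lemma:kob-mean}. For $n=1$ this is correct and matches the paper.

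For $n\ge 2$, however, you diverge from the paper and there is a genuine gap. You propose a Kovrijkine-style coordinate-by-coordinate Fubini iteration, and the weak link is exactly the one you flag but then dismiss: control of the sliced doubling indices. Your claim that ``the sliced doubling indices are bounded uniformly by the global $N$ via the sub-mean-value of Lemma~\ref{lemma:kob-mean}'' does not hold. Lemma~\ref{lemma:kob-mean} bounds $\sup$ on a small Kobayashi ball by an average on a larger one; it says nothing about the doubling index of an arbitrary one-variable slice $z_1\mapsto f(z_1,z^*)$, which can have extremely small supremum on the inner disc (and hence unbounded doubling index) even when the global $N$ is small. Likewise, the log-subharmonic function $z_1\mapsto\|f(z_1,\cdot)\|_{L^p(Y')}$ is not covered by Proposition~\ref{prop:local}, which is stated for holomorphic functions, and its doubling index also needs to be controlled before any Remez-type estimate can be applied to it. Without that control, the constants in the iteration would compound (roughly $N^n$ in the exponent) rather than aggregate to $(C/\gamma)^{C(N+1)}$.

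The paper avoids this by a two-step device that your sketch omits. First, Proposition~\ref{prop:local} is applied not to $E$ but to a \emph{fixed} inscribed polydisc $P^*(z)\subset Y(z,r)$ whose relative density is bounded below by a constant depending only on $r$ (not on $\gamma$); this yields a point $z_0\in Y(z,r)$ with $|f(z_0)|\ge Ce^{-C(N+1)}\sup_{Y(z,r)}|f|$, i.e.\ (eq:z0). Second, a single complex line $\kappa$ through $z_0$ is selected by an averaging/pigeonhole argument in complex polar coordinates (not a coordinate-by-coordinate Fubini), so that the section $E_\kappa$ retains density $\ge a\gamma$ in $Y_\kappa$, i.e.\ (eq:kappa). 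Because the line passes through $z_0$, the one-dimensional doubling index $\tilde N$ of the sliced function is bounded by $C(N+1)$, and the $n=1$ case of Proposition~\ref{prop:local} applied once gives the result with the desired linear dependence on $N$. Your proposal as written is missing this selection of a good base point and a good direction; without it the induction does not close with the claimed constant.
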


\begin{proof}
 We aim to use Proposition \ref{prop:local} . The natural affine map to take is $D = \Lambda_\delta(w) \circ U(w)$ where $\Lambda_\delta(w)$ is defined by the scaling $(z_1,z_2,\ldots,z_n) \mapsto \lambda( z_1,\delta(w)^{1/2}z_2,\ldots,\delta(w)^{1/2}z_n)$  and $U$ is any unitary as in Lemma \ref{lemma:kob-geo}; a particular $U(w)$ will be chosen later. In this way, choosing $\lambda$ so that $\diam D[Y(w,r)]=\frac 14$,
	\[ \begin{aligned} 4^{-2n} &= \diam D[Y(w,r)]^{2n} \le [\lambda\delta(w)]^{2n}[(n-1)B(r)^2+A(r)^2]^{n}, \\ 
			|D[Y(w,r)]| &\ge (\lambda\delta(w))^{2n}b(r)^{2n-2}a(r)^2, \end{aligned}\]
 which provides a lower bound on $|D[Y(w,r)]|$ that only depends on $r.$ 
			
It remains to establish the boundary separation property.  Choose $\ep_0$ sufficiently small so that if $w \in \Omega_{\ep_0},$ then $Y(w,R) \subset \Omega_\ep,$ where $\ep$ is as in Lemma \ref{lemma:kob-geo}. 
The triangle inequality shows that one can fit $Y(v,(R-r)/2) \subset Y(w,R) \backslash Y(w,r)$ as long as $d(v,w) = (R+r)/2$. Therefore, on applying $D$,
	\begin{equation}\label{eq:sep} \begin{aligned} \dist(\partial D(Y&(w,R)),\partial D(Y(w,r)) \\
		&\ge \inf_{d(v,w)=\tfrac{R+r}2} \inf \{2|x-Dv| : x \in \partial D(Y(v,\tfrac{R-r}{2})) \} \\
		&\ge \inf_{d(v,w)=\tfrac{R+r}2} \inf \{ 2|\Lambda_\delta(w) U(w) U(v)^{-1} x| : \\
		& \qquad \qquad \qquad x \in \partial P(0,\delta(v)a(\tfrac{R-r}{2}),\delta(v)^{1/2}b(\tfrac{R-r}{2}))\} \end{aligned} \end{equation}
Since $\Omega$ is smooth, for every $\eta>0$, there exists $\ep'>0 $ such that if $p,q \in \bomega$ with $|p-q| \le  \ep' $, then 
	\[ |n(p)-n(q)| \le \eta|p-q|, \]
where $n(p)$ is the outward normal vector at $p \in \partial \Omega$. For $w \in \Omega_{\ep_0}$, let $n(w)=n(\pi(w))$.
Now we specify $U(w)$ and $U(v)$ to be unitary coordinate changes satisfying
	\[ |U(w)x-U(v)x| \le C |x| \cdot |n(w)-n(v)|, \quad x \in \mathbb C^n\]
where $C$ is some absolute constant (we are thinking of $v$ as belonging to a suitable small neighborhood of $w$ with respect to the Euclidean distance).
Therefore, by unitarity 
	\[ |U(w)U(v)^{-1}x - x| \le C | x||n(w)-n(v)|, \]
and if $|\pi(w)-\pi(v)| \le \ep'$  and  we assume $\delta(w) \leq 1$, then
	\[ |\Lambda_\delta U(w)U(v)^{-1} x - \Lambda_\delta x| \le \eta \lambda C |\pi(w)-\pi(v)| \cdot |x|. \]
So, if $d(w,v) \le R$, then by using the triangle inequality and Lemmas \ref{lemma:kob-geo} and \ref{lemma:kobbounddist}, we have 
$$
|\pi(w)-\pi(v)| \le |\pi(w)-w|+|w-v|+|v-\pi(v)| \lesssim (1+D(R)+\max\{A(R),B(R)\})\delta(w)^{1/2}.
$$

 Therefore, perhaps by shrinking $\varepsilon_0$, if we restrict to $\delta(w)^{1/2} \le  \min\{1,\ep' (1+D(R)+B(R))^{-1}\}$,  combining the previous two displays gives a uniform lower bound for (\ref{eq:sep}). Indeed, for any $w \in \Omega_{\ep_0}$, $v$ such that $d(v,w) = \frac{R+r}{2}$, and $x \in \partial P(0,\delta(v)a(\frac{R-r}{2}),\delta(v)^{1/2} b(\frac{R-r}{2}))$, 
	\[\begin{aligned} |\Lambda_{\delta} U(w)U(v)^{-1}x| &\ge |\Lambda_\delta x| - \eta \lambda C |\pi(w)-\pi(v)| \cdot |x| \\
	&\ge |\Lambda_\delta x| - C \eta \lambda \delta(w)^{1/2} |x| \\
	&\ge |\Lambda_\delta x| - C \eta |\Lambda_\delta x| \\
	&\gtrsim \lambda \delta(w)(1-C \eta). \end{aligned}\]
But $\eta$ can be chosen arbitrarily small . 
Thus the boundary separation property follows from the normalizing choice of $\lambda$ at the beginning.

 Fix $s$ and $S$ satisfying $r<s<S<R$. Applying Proposition \ref{prop:local} to $Y(z,s) \subset Y(z,S)$, we obtain
	\begin{equation}\label{eq:lpl2} \avg{f}_{p,Y(z,r)} \le C \avg{f}_{2,Y(z,s)} \le Ce^{C(N_2(s,S)+1)S^*} \avg{f}_{2,E}  \end{equation}
for any  measurable  $E \subset Y(z,r) \subset Y(z,s)$  with $|E|>0$. However,
	\[ N_2(s,S) = \log \frac{\|f\|_{L^2(Y(z,S))}}{\|f\|_{L^2(Y(z,s))}} \le C+ \log \frac{\|f\|_{L^\infty(Y(z,S))}}{\|f\|_{L^\infty(Y(z,r))}}=C+ N_\infty(r,S). \]
Therefore,
	\begin{equation}\label{eq:ndim} \sup_{Y(z,r)} |f| \le Ce^{C(N_\infty(r,S)+1)S^*} \sup_{E}|f|. \end{equation}
We can now address the form of the constant in  the statement  for all $n \ge 1$ using the following rotation argument which is common in results of this type \cite{kovrijkine01,lebeau19}.  If one does not care about the sharp form of the constant, one may skip to (\ref{eq:G}) and use the less precise estimate (\ref{eq:ndim}) in place of (\ref{eq:1d}) below.

According to Lemma \ref{lemma:kob-geo}, the rotated and translated polydisk 
	\[ P^*(z) := z+U(z)^{-1}P(0,\tfrac{a(r)}{2} \delta(z),\tfrac{b(r)}{2} \delta(z)^{1/2}) \]
is strictly contained in $Y(z,r)$ and the density $\avg{P^*(z)}_{Y(z,r)}$ is bounded below by some postive constant depending on $r$. Furthermore, there exists $z_0 \in P^*(z) \subset Y(z,r)$ such that $\sup_{P^*(z)}|f| \le 2 |f(z_0)|$. Therefore, applying (\ref{eq:ndim}) with $E=P^*(z)$, there exists a constant $C$ depending on $r$ such that
	\begin{equation}\label{eq:z0} \sup_{Y(z,r)} |f| \le Ce^{C(N_\infty(r,S)+1)}|f(z_0)|.\end{equation}
At this point, we rescale by the affine map $D=D(z)$ from the beginning of this proof. Define $g = f(D^{-1}\cdot+z_0)$, $F = D(E-z_0)$, $Y = D(Y(z,r)-z_0)$, and $X = D(Y(z,S)-z_0)$. Let $\kappa$ denote an affine complex line through the origin; that is, $\kappa$ is determined by a certain direction $\Theta \in \mathbb C^n$ with $|\Theta|=1$ so that
	\[ \kappa = \{z \in \mathbb C^n : z=w\Theta,\ \mbox{for some} \ w \in \mathbb C \}.\] Then, for a measurable set $A \subset \mathbb C^n$, the section $A_\kappa$ is defined by $\{ w \in \mathbb C : z=w\Theta \in A \}$. We claim now that there exists a constant $a$, depending on $\Omega$ and $r$, and an affine complex line through the origin $\kappa$ such that 
	\begin{equation} \label{eq:kappa} |F_\kappa| \ge a \gamma \cdot |Y_\kappa|.\end{equation} 
Note that any arbitrary section $A_\kappa$ has real dimension two so $|\cdot|$ in (\ref{eq:kappa}) means the two dimensional Lebesgue measure. We will return to establishing (\ref{eq:kappa}) at the end of the proof.

We have already verified the geometric conditions (i)-(iii) in Proposition \ref{prop:local} for these $X$ and $Y$. Let us confirm they are preserved under taking sections, $X_\kappa$, $Y_\kappa$, up to isotropic dilation. First, 
	\[ \operatorname{dist}(\partial X_\kappa,\partial Y_\kappa) = \inf_{\substack{ w_1\Theta \in \partial X \\ w_2\Theta \in \partial Y}} |w_1-w_2| = \inf_{\substack{ w_1\Theta \in \partial X \\ w_2\Theta \in \partial Y}} |w_1 \Theta -w_2 \Theta| \]
	\[ \ge \inf_{\substack{ z_1 \in \partial X \\ z_2 \in \partial Y}} |z_1 -z_2| = \operatorname{dist}(\partial X,\partial Y) \ge d.\] \
Next, since $z_0 \in P^*(z)$, the triangle inequality shows that $Y$ contains a Euclidean ball centered at the origin of some small radius $c_r>0$. Similar to the above computation, crucially using the fact $|\Theta|=1$, we obtain $|B(0,c_r)_\kappa| = \pi c_r^2$ and $\diam(B(0,c_r)_\kappa)=2c_r$. By containment, these gives lower bounds on $|Y_\kappa|$ and $\diam(Y_\kappa)$. 
Therefore, to apply Proposition \ref{prop:local} we only need to apply the isotropic dilation by some parameter $b>0$ so that $b \cdot \diam(Y_\kappa) = \frac 14$. Since $b$ is bounded above and below by constants depending on $r$, Proposition \ref{prop:local} also applies to $X_\kappa$, $Y_\kappa$.

Now, $h(w) := g(w\Theta)$ for $w \in \mathbb C$ is a holomorphic function of one complex variable. The mean value property is immediate in the plane, so, by (\ref{eq:z0}), (\ref{eq:kappa}), and the $n=1$ case of Proposition \ref{prop:local}, 
	\begin{equation}\begin{aligned} \label{eq:1d}  Ce^{-C(N_\infty(r,S)+1)} \sup_{Y(z,r)}|f| &\le |f(z_0)| \le \sup_{Y_\kappa} |h| \\
	& \le \left(\frac{C}{\gamma}\right)^{C(\tilde N+1)} \sup_{F_\kappa} |h|  \le \left(\frac{C}{ \gamma}\right)^{C(N_\infty(r,S)+1)} \sup_{E}|f|, \end{aligned} \end{equation}
where, by (\ref{eq:z0}),
	\[ \begin{aligned} \tilde N &= \log \frac{\sup_{X_\kappa}|h|}{\sup_{Y_\kappa}|h|} \le \log \frac{\sup_{Y(z,S)}|f|}{|f(z_0)|} \\
	& \le \log \frac{ \sup_{Y(z,S)}|f|}{Ce^{-C(N_\infty(r,S)+1)} \sup_{Y(z,r)}|f|} \le C(N_\infty(r,S)+1). \end{aligned} \]

Finally, we extend (\ref{eq:1d}) to all $1 \le p < \infty$ which will conclude the proof. Define
	\begin{equation}\label{eq:G} G = \{ z \in E : |f(z)| \le 2^{1/p} \avg{f}_{p,E} \}. \end{equation}
Then, $|E \backslash G| \le |E|/2 $ so $\avg{G}_{Y(w,r)} \ge \gamma/2$. Therefore, applying (\ref{eq:1d}) with $G$ in place of $E$, one obtains
	\[ \avg{f}_{p,Y(z,r)} \le \sup_{Y(z,r)} |f| \le \left(\frac{2C}{ \gamma}\right)^{C(N_\infty(r,S)+1)} \sup_{G}|f| \le \left(\frac{C}{ \gamma}\right)^{C(N_p(r,R)+1)}2^{1/p}\avg{f}_{p,E}. \]

This completes the proof, except for constructing the complex line $\kappa$ so that (\ref{eq:kappa}) holds. We do so using complex polar coordinates which can be introduced using projective geometry, but we prefer to do this by hand as a calculus exercise. For each $k=1,\ldots,n$, define 
	\[ \mathcal C^n_k = \{z \in \mathbb C^n : |z_i| < \frac{1}{\sqrt{n}}|z|, \ i=1,\ldots,k-1, \ |z_k | \ge \frac{1}{\sqrt n}|z| \}.\]
For any $z \in \mathbb C^n \backslash \{0\}$ there exists a smallest $k$ such that $|z_k| \ge \frac{1}{\sqrt n} |z|$. Then $z$ belongs to this $\mathcal C^n_k$ and none other. Therefore $\{\mathcal C^n_k\}_{k=1}^n$ forms a partition of $\mathbb C^n \backslash \{0\}$. Now we define the polar coordinates for $z $ in each $\mathcal C^n_k$:
	\[ w = \frac{z_k}{|z_k|} |z| \in \mathbb C , \quad \rho = \frac{|z_k|}{z_k} \frac{(z_1,\ldots,z_{k-1},z_{k+1},\ldots,z_n)}{|z|} \in \mathcal B^{n-1}_k, \]
	\[ \mathcal B^{n-1}_k := \{ z \in\mathbb C^{n-1} : |z_i| < \tfrac{1}{\sqrt n}  \ i=1,\ldots,k-1, \ |z| \leq \sqrt{1-\tfrac{1}{n}} \}.\]
For ease of notation, we introduce the variable 
	\[ \Theta = \Theta(\rho) = (\rho_1,\ldots,\rho_{k-1},\sqrt{1-|\rho|^2},\rho_{k+1},\ldots,\rho_n) \in \mathbb C^n, \quad |\Theta|=1\]
so that $z=w\Theta$. We think of $\Theta$ as the direction of the complex line over which $z$ varies as $w$ varies over $\mathbb C$.

Some calculations show that the Jacobian of this tranformation has determinant $|w|^{2n-2} q(\rho)$ where $q$ is some rational function which blows up as $|\rho| \to 1$. However, this was the reason we constructed each $\mathcal C^n_k$ because in that case $|\rho|^2 = \frac{|z|^2-|z_k|^2}{|z|^2} \leq 1-\frac{1}{n}$ is bounded away from $1$ and so $q$ is bounded on each piece $\mathcal B^{n-1}_k$. 
Therefore, we have the polar coordinate transformation for any $H \in L^1(\mathbb C^n)$,
	\[ \begin{aligned} \int_{\mathbb C^n} H(z) \, dA(z) &= \int_{\mathbb C^n \backslash \{0\}} H(z) \, dA(z) = \sum_{k=1}^n \int_{\mathcal C^n_k} H(z) \, dA(z) \\
	&= \sum_{k=1}^n \int_{\mathcal B^{n-1}_k} \int_{\mathbb C} H(w \Theta(\rho)) |w|^{2n-2} |q(\rho)| \, dA(w) \, dA(\rho).\end{aligned}\]
Suppose, toward a contradiction, that 
	\begin{equation}\label{eq:contra} \int_{\mathbb C} 1_F(w\Theta(\rho)) \, dw \le a \gamma \cdot \int_{\mathbb C} 1_Y(w\Theta(\rho)) \, dw \end{equation}
for some $a>0$ and all $\rho \in \mathcal B := \cup_{k=1}^n \mathcal B^{n-1}_k$. By Lemma \ref{lemma:kob-geo}, $Y$ is contained in a ball centered at the origin, say $B_r$ whose radius depends only on $r$, the radius of the original Kobayshi ball $Y(z,r)$. Furthermore, $|Y|$ is bounded below by some constant depending only on $\Omega$, $r$, and $n$. With these remarks, the assumption (\ref{eq:contra}), and the polar coordinates, we obtain
	\[ \begin{aligned} \gamma |Y| &\le \int_{\mathbb C^n } 1_F dA =\sum_{k=1}^n \int_{\mathcal B^{n-1}_k } \int_{\mathbb C} 1_F(w\Theta(\rho)) |w|^{2n-2} \, dw |q(\rho)| \, d\rho \\
	& \le \diam(B_r)^{2n-2} \sum_{k=1}^n \int_{\mathcal B_k^{n-1}} \int_{\mathbb C} 1_F(w\Theta(\rho)) \, dw |q(\rho)| \, d\rho  \\
	& \le a \gamma \diam(B_r)^{2n-2} \sum_{k=1}^n \int_{\mathcal B^{n-1}_k} \int_{\mathbb C} 1_Y(w\Theta(\rho)) \, dw |q(\rho)| \, d\rho\\
	& \le a \gamma \diam(B_r)^{2n} \left( \sum_{k=1}^n \int_{\mathcal B^{n-1}_k} |q(\rho)| \, d\rho \right). \end{aligned} \]
Cancelling out $\gamma$, the constants above only depend on $r$ and $n$ so for $a$ small enough, the above display is a contradiction. Therefore there must be a $\Theta^*$ so that the opposite inequality to (\ref{eq:contra}) holds. This is exactly the desired estimate (\ref{eq:kappa}) where
	\[ \kappa = \{ z \in \mathbb C^n : z= w \Theta^*, \ w \in \mathbb C \}. \]

\end{proof}

It will also be important later that Lemma \ref{lemma:local} holds when $Y(z,R)$ and $Y(z,r)$ are replaced by any Euclidean balls $B(z,R)$ and $B(z,r)$ or Euclidean cubes $Q(z,R)$ and $Q(z,r)$ with a fixed ratio $\frac{R}{r}=d$. This can be proved following the same path as above, yet with many simplifications since the precise geometry is known. For this reason, and because this result follows from a theorem of A. Brudnyi in \cite{brudnyi1999local}, we omit the proof.

\subsection{Sufficiency}\label{subsec:suff}
We are now in the position to prove ``(ii) implies (i)'' in Theorem \ref{thm:main}.
\begin{proposition}\label{prop:suff}
Let $\Omega$ be a smoothly bounded strongly pseudoconvex domain, $0<r<1$, $1 \le p \le \infty$, $\alpha>-1$. There exists $C,q>0$ such that for all $0<\gamma<1$ and $E \subset \Omega$ satisfying
	\begin{equation}\label{eq:density-suff} \frac{|E \cap Y(z,r)|}{|Y(z,r)|} \ge \gamma, \quad \forall z \in \Omega,\end{equation}
	\[ \|f\|_{L^p_\alpha(\Omega)} \le C \gamma^{-q} \|f\|_{L^p_\alpha(E)} , \quad \forall f \in A^p_\alpha(\Omega).\]
\end{proposition}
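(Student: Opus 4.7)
The plan is to combine the local Kobayashi-ball Remez-type estimate of Lemma~\ref{lemma:local} with a Whitney-style covering of $\Omega$ and a standard ``small versus large doubling-index'' dichotomy to absorb the error. I would first fix $R > r$ and pick a maximal family $\{z_j\} \subset \Omega$ whose members are pairwise separated by Kobayashi distance at least $\tanh^{-1}(r/5)$. By maximality the balls $\{Y(z_j, r)\}$ cover $\Omega$, and a standard volume-counting argument using Lemmas~\ref{lemma:kob-geo} and~\ref{lemma:kobbounddist} shows that the enlarged balls $\{Y(z_j, R)\}$ have bounded overlap $M = M(R)$: any $w \in \bigcap_i Y(z_{j_i}, R)$ forces $\delta(z_{j_i}) \asymp \delta(w)$, so the disjoint balls $Y(z_{j_i}, r/5)$ of volume $\asymp \delta(w)^{n+1}$ all fit inside $Y(w, R + r/5)$ of comparable volume. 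The same comparability $\delta(\cdot) \asymp \delta(z_j)$ on $Y(z_j, R)$ gives $|\rho|^\alpha \asymp \delta(z_j)^\alpha$ there, so weighted and unweighted norms on each such ball differ only by a bounded factor.

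To each $z_j \in \Omega_{\ep_0}$ I would apply Lemma~\ref{lemma:local} to $E_j := E \cap Y(z_j, r)$; the density hypothesis \eqref{eq:density-suff} supplies $\avg{E_j}_{Y(z_j, r)} \ge \gamma$. For the finitely many $z_j$ with $\delta(z_j) > \ep_0$, Kobayashi balls of radius $r$ are uniformly comparable to Euclidean balls, so the Euclidean version of the local estimate noted at the end of Section~\ref{sec:pseudo} takes over. Combined with the weight comparability this gives
\[ \|f\|_{L^p_\alpha(Y(z_j, r))}^p \le \left(\frac{C}{\gamma}\right)^{Cp(N_j + 1)} \|f\|_{L^p_\alpha(E \cap Y(z_j, r))}^p, \qquad N_j := \log \frac{\|f\|_{L^p(Y(z_j, R))}}{\|f\|_{L^p(Y(z_j, r))}}. \]
(For $p = \infty$ the same argument runs with sup-norms in place of $L^p$-norms and with an extra step of picking a near-maximizer of $|f|$ in $Y(z_j, r)$.)

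Next I would fix a threshold $N_0$ and split the indices into $J_1 = \{j : N_j \le N_0\}$ and $J_2 = \{j : N_j > N_0\}$. For $j \in J_1$ the constant above is bounded by $(C/\gamma)^{Cp(N_0 + 1)}$. For $j \in J_2$, the definition of $N_j$ together with the weight comparability yields $\|f\|_{L^p_\alpha(Y(z_j, r))}^p \le C\, e^{-pN_0}\|f\|_{L^p_\alpha(Y(z_j, R))}^p$. Summing over $j$, using that $\{Y(z_j,r)\}$ covers $\Omega$ on the left and the two bounded-overlap properties on the right,
\[ \|f\|_{L^p_\alpha(\Omega)}^p \le M\left(\frac{C}{\gamma}\right)^{Cp(N_0 + 1)} \|f\|_{L^p_\alpha(E)}^p + CM \, e^{-pN_0} \|f\|_{L^p_\alpha(\Omega)}^p. \]
Choosing $N_0$ depending only on $M$ and $p$ so that $CMe^{-pN_0} \le 1/2$ absorbs the second term on the left and yields the claimed bound with $q = Cp(N_0 + 1)$, a constant depending only on $p$, $\alpha$, $r$, and $\Omega$.

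The main obstacle is bookkeeping rather than new analytic content: the propagation-of-smallness ingredient is already packaged into Lemma~\ref{lemma:local}. The slightly delicate items are verifying the bounded overlap of $\{Y(z_j, R)\}$, the uniform comparability $|\rho|^\alpha \asymp \delta(z_j)^\alpha$ on each Kobayashi ball, and that the restriction $z_j \in \Omega_{\ep_0}$ in Lemma~\ref{lemma:local} can be supplemented on the compact interior by the Euclidean analog; each of these rests on Lemmas~\ref{lemma:kob-geo} and~\ref{lemma:kobbounddist} and is routine.
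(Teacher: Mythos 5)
Your proposal is correct and tracks the paper's argument closely: cover $\Omega$ by Kobayashi balls with bounded-overlap doubles, apply the local Remez estimate (Lemma~\ref{lemma:local}) on each ball, split the indices according to the size of the doubling index, and absorb the high-doubling-index contribution. The one genuine structural difference is the treatment of the compact interior. The paper does not apply a Remez estimate there at all; instead it first proves, via a soft normal-families/compactness argument, that $\Omega_\ep$ is already a dominating set with some non-explicit constant $C_\ep$, and then works only with ball centers in $\Omega_\ep$, using $C_\ep$ both to set the good/bad threshold $(2C_\ep M)^{1/p}$ and to pass from $\|f\|_{L^p_\alpha(\Omega_\ep)}$ back to $\|f\|_{L^p_\alpha(\Omega)}$ at the end. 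Your approach instead covers all of $\Omega$ and invokes the Euclidean variant of the local estimate on the finitely many interior balls, then chooses the threshold $N_0$ purely to make the absorption work; this is a bit more self-contained and avoids the qualitative $C_\ep$, at the modest price of carefully transferring the Kobayashi-ball density hypothesis to a circumscribed Euclidean ball (density on a set does not descend to arbitrary subsets, but it does pass to a containing ball of comparable volume) and of checking that the Euclidean doubling index is dominated by the Kobayashi one via $Y(z_j,r)\subset B_1 \subset B_2 \subset Y(z_j,R)$. Two small bookkeeping slips you should repair before fully writing it out: for the disjointness of the small balls in the overlap count you want radius strictly less than half the separation, and the containment $Y(z_{j_i}, r/5)\subset Y(w,\cdot)$ should use the correct additivity of Kobayashi radii, $\tanh(\tanh^{-1}R+\tanh^{-1}(r/5))$, rather than $R+r/5$. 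Neither affects the argument.
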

\begin{proof}
Fix $R>r$, $1 \le p \le \infty$, and $\ep$ from Lemma \ref{lemma:local}. We first claim that $\Omega_\ep$ is a dominating set. If not, then we can find $\{f_n\} \subset A_\alpha^p(\Omega)$, with $\|f_n\|_{L^p_\alpha(\Omega)}=1$ and $\|f_n\|_{L^p_\alpha(\Omega_\ep)} \to 0$. The second property implies the existence of subsequence $\{f_{n_k}\}$ such that 
	\begin{equation}\label{eq:zero-ep} \lim_{k \to \infty} f_{n_k}(z) = 0 , \quad z \in \Omega_\ep \mbox{ a.e.}\end{equation}
On the other hand, since point evaluation is a bounded linear functional on $L^p_\alpha(\Omega)$, $\{f_{n_k}\}$ is a normal family. Consider the compact set $K=\overline{ \Omega \backslash \Omega_{\ep/2}}$. There exists a subsequence $f_{m_k} \subset \{f_{n_k}\}$ and $f_0 \in \Hol(\Omega)$  such that
	\[ \lim_{k \to \infty} f_{m_k}(z) = f_0(z), \quad z \in K. \]
By (\ref{eq:zero-ep}), $f_0(z)=0$ for $z \in K \cap \Omega_\ep$ which implies $f_0(z)=0$ for all $z \in K$. However, since $K \cup \Omega_\ep = \Omega$,
	\[ 0=\|f_0\|_{L^p_\alpha(K)} = \lim_{k \to \infty} \|f_{m_k}\|_{L^p_\alpha(K)} \ge \lim_{k \to \infty} \|f_{m_k}\|_{L^p_\alpha(\Omega)} - \|f_{m_k}\|_{L^p_\alpha(\Omega_\ep)} =1 \]
which is a contradiction. Therefore, there exists $C_\ep>0$ such that
	\[ \|f\|_{L^p_\alpha(\Omega)} \le C_\ep \|f\|_{L^p_\alpha(\Omega_\ep)}. \]
Now, $\Omega$ can be covered with Kobayashi balls $Y_k=Y(w_k,r)$ such that $X_k=Y(w_k,R)$ have finite overlap of say $M>0$, see \cite[Lemma 1.5]{abatesaracco2011}. For $1 \le p <\infty$, call an index $k$ good if 
	\[ (2C_\ep M)^{1/p} \|f\|_{L^p_\alpha(Y_k)} \ge \|f\|_{L^p_\alpha(X_k)}. \]
Define 
	\[  \mathcal G = \{ k : k \mbox{ is good and } w_k \in \Omega_\ep \}, \quad
		\mathcal B = \{ k : w_k \in \Omega_\ep \} \backslash \mathcal G. \]
In this way
	\[ \sum_{k \in \mathcal B} \|f\|_{L^p_\alpha(Y_k)}^p \le \dfrac{1}{2C_\ep M} \sum_{k \in \mathcal B} \|f\|_{L^p_\alpha(X_k)}^p \le \frac 1{2C_\ep} \|f\|_{L^p_\alpha(\Omega)}^p  \le \frac 12 \|f\|_{L^p_\alpha(\Omega_\ep)},\]
so $\sum_{k \in \mathcal G} \|f\|_{L^p_\alpha(Y_k)}^p \ge \frac 12 \|f\|^p_{L^p_\alpha(\Omega_\ep)}$. Thus, for each \textit{good} $k$, we apply Lemma \ref{lemma:local} and the last statement of Lemma \ref{lemma:kobbounddist} to obtain
	\[ \begin{aligned}\|f\|_{L^p_\alpha(Y_k)} &\le D(r)|\rho(w_k)|^{\alpha} \| f \|_{L^p(Y_k)} \\
			&\le D(r)|\rho(w_k)|^{\alpha}\left(\frac{C}{\gamma} \right )^{C(N+1) } \|f\|_{L^p(E \cap Y_k)} \\
			&\le D(r)^2 \left(\frac{C}{\gamma} \right )^{C'} \|f\|_{L^p_\alpha(E \cap Y_k)} . \end{aligned}\]
We crucially used the fact that $N_p(w_k,f,r,R)$ is bounded by some constant depending on $D(r)$, $M$ and $C_\ep$ when $k$ is good.
Therefore,
	\[ \|f\|_{L^p_\alpha(\Omega)}^p \le 2C_\ep^p \sum_{k \in \mathcal G} \|f\|_{L^p_\alpha(Y_k)}^p \le C\gamma^{-C'} \|f\|^p_{L^p_\alpha(E)}. \]
When $p=\infty$, the proof is easier, noticing that there exists $z_0 \in \Omega$ such that $\|f\|_\infty \le 2 |f(z_0)|$. Then, $\sup_{Y(z_0,R)}|f| \le 2\sup_{Y(z_0,r)}$ so $N \le \log 2$ and Proposition \ref{prop:local} implies
	\[ \|f\|_\infty \le 2 \sup_{Y(z_0,r)}|f| \le C\gamma^{-C'} \sup_{E} |f|. \]
\end{proof}

\begin{remark}
From the proof, it is clear that the only property of the weight $\rho^\alpha$ that we used was that it is approximately constant on each Kobayashi ball. So Proposition \ref{prop:suff} holds for any $L^p_w$ space with a weight $w$ satisfying this assumption.
\end{remark}

\subsection{Necessity}\label{subsec:nec}
Recall from the introduction (\ref{eq:intro-berezin}) that $T_E^{p,\alpha}(z) = \|k_z^{p,\alpha}\|_{L^p_\alpha(E)}$. An obvious necessary condition for $E$ to be a dominating set for $A^p_\alpha(\Omega)$ is that 
	\[ \inf_{z \in \Omega} T_E^{p,\alpha}(z) >0.\]
Recall also that $k_z^{p,\alpha}$ is the $L^p_\alpha$-normalized reproducing kernel for $A^2_\alpha(\Omega)$. If $\|k_z^{p,\alpha}\|_{L^p_\alpha(E)}$ can be connected to the density condition, then we can find dominating sets simply by testing on certain functions.  In the case that $\alpha$ is a non-negative integer,  the following two properties of the reproducing kernel facilitate this connection.
\begin{lemma}\label{lemma:kernel}
Let $\Omega$ be a smoothly bounded strongly pseudoconvex domain. Let $1 < p < \infty$ and  $\alpha$ be a non-negative integer . Then,
	\begin{equation}\label{eq:tail} \lim_{r \to 1^- } \sup_{z \in \Omega} \|k_z^{p,\alpha}\|_{L^p_\alpha(\Omega \backslash Y(z,r))} = 0\end{equation}
and for each $r \in (0,1)$,
	\begin{equation}\label{eq:linfty} \sup_{\substack{z \in \Omega\\ w \in Y(z,r)}}|Y(z,r)|^{1/p} \cdot |k_z^{p,\alpha}(w)| |\rho(w)|^{\alpha/p} < \infty. \end{equation}
\end{lemma}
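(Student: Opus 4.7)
My plan is to derive both estimates from standard sharp pointwise and integral bounds on the Bergman kernel of a smoothly bounded strongly pseudoconvex domain. The three inputs I will invoke are: (a) the diagonal asymptotic $K_\alpha(z,z) \approx \delta(z)^{-(n+1+\alpha)}$; (b) a Fefferman/McNeal-type off-diagonal pointwise bound $|K_\alpha(z,w)| \lesssim d_M(z,w)^{-(n+1+\alpha)}$, where $d_M$ is a quasi-distance whose sub-level sets are comparable to the anisotropic polydiscs that sandwich the Kobayashi balls in Lemma \ref{lemma:kob-geo}; and (c) the Forelli--Rudin integral formula, which together with (a) and (b) yields the sharp asymptotic
\[ \|K_\alpha(z,\cdot)\|_{L^p_\alpha(\Omega)} \approx \delta(z)^{-(n+1+\alpha)/p'}, \quad 1 < p < \infty. \]

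The pointwise estimate (\ref{eq:linfty}) will follow by a direct accounting of powers of $\delta(z)$. For $w \in Y(z,r)$, Cauchy--Schwarz applied to the reproducing formula gives $|K_\alpha(z,w)| \le K_\alpha(z,z)^{1/2} K_\alpha(w,w)^{1/2}$, which together with (a) and Lemma \ref{lemma:kobbounddist} (ensuring $\delta(w) \approx \delta(z)$ on $Y(z,r)$) yields $|K_\alpha(z,w)| \lesssim \delta(z)^{-(n+1+\alpha)}$. Since also $|\rho(w)| \approx \delta(z)$ on the Kobayashi ball and $|Y(z,r)| \approx \delta(z)^{n+1}$ by Lemma \ref{lemma:kob-geo}, combining with the norm asymptotic above forces all the $\delta(z)$-exponents to cancel exactly, giving the desired boundedness.

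For the tail estimate (\ref{eq:tail}), I will establish the equivalent vanishing Rudin--Forelli statement: as $r \to 1^-$, uniformly in $z$,
\[ \int_{\Omega \setminus Y(z,r)} |K_\alpha(z,w)|^p |\rho(w)|^\alpha\, dA(w) = o\bigl(\|K_\alpha(z,\cdot)\|_{L^p_\alpha}^p\bigr). \]
My plan is to decompose $\Omega \setminus Y(z,r)$ into Kobayashi annuli $A_k = Y(z, r_{k+1}) \setminus Y(z, r_k)$ with $r_0 = r$ and $r_k \nearrow 1$, apply (b) on each annulus, and bound the volume of $A_k$ via Lemma \ref{lemma:kob-geo}. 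The comparison between $d_\Omega$ and $d_M$ on a strongly pseudoconvex domain is logarithmic (mirroring the relationship on the ball), so $|K_\alpha(z,w)|^p$ decays geometrically in the annulus index $k$ while the volumes grow only polynomially; summing the resulting geometric series produces the tail bound. Crucially, after dividing by $\|K_\alpha(z,\cdot)\|_{L^p_\alpha}^p$, all $z$-dependent powers of $\delta(z)$ cancel consistently, leaving only a convergent tail series in $k$ which can be made arbitrarily small by taking $r$ close to $1$.

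The principal obstacle will be keeping every step uniform in $z$. Kobayashi balls are anisotropic with geometry depending on $\delta(z)$, so the volumes, the McNeal distances on annuli, and the kernel bounds all carry $z$-dependent powers of $\delta(z)$; one must verify that these dependencies scale consistently across annuli so as to cancel after normalization. The regime $z$ bounded away from $\partial\Omega$ is trivial by compactness and local boundedness of $K_\alpha$, so the genuine difficulty is $\delta(z) \to 0$, where the explicit polydisc description of Lemma \ref{lemma:kob-geo} together with the matching scaling of $d_M$ in boundary coordinates guarantees the required uniform cancellation.
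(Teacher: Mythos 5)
Your plan outlines a route that is broadly parallel to, but structurally different from, the paper's, and it contains one genuine simplification and one genuine gap.

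For~(\ref{eq:linfty}), your approach is a clean alternative. The paper uses the off-diagonal Fefferman/Boutet de Monvel--Sj\"ostrand expansion $K_\alpha(z,w)=a(w)|\Psi(z,w)|^{-n-1-\alpha}+E(z,w)$ on the near-diagonal region $U_\ep$, plus Kerzman's $C^\infty$ regularity off $U_\ep$. You instead propose the reproducing-kernel Cauchy--Schwarz bound $|K_\alpha(z,w)|\le K_\alpha(z,z)^{1/2}K_\alpha(w,w)^{1/2}$, the diagonal asymptotic $K_\alpha(z,z)\approx\delta(z)^{-(n+1+\alpha)}$, Lemma~\ref{lemma:kobbounddist} to get $\delta(w)\approx\delta(z)$ on $Y(z,r)$, and the Rudin--Forelli estimate for the norm. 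This is more elementary than the paper's use of off-diagonal asymptotics (you only need the diagonal growth plus the norm growth) and does all the $\delta(z)$-power cancellation correctly. This part is sound.

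For~(\ref{eq:tail}), your plan is a genuinely different decomposition from the paper's, but the proposal as written has a gap that you would have to close. The paper's central technical tool is Lemma~\ref{kobayashishape}, which converts the condition $d(z,w)\ge\tanh^{-1}R_N$ into a \emph{dichotomy} in the adapted $(t_1,t_2,t')$ coordinates: either the normal/tangential Euclidean separation exceeds $N\delta(z)$, or $\delta(w)<\delta(z)/N$. The paper then integrates in rescaled coordinates $t_i/\delta(z)$ and applies dominated convergence, which makes the uniformity in $z$ completely transparent. You propose instead to decompose into Kobayashi annuli $A_k$ and claim that $|K_\alpha(z,w)|^p$ decays geometrically in $k$ while $|A_k|$ grows polynomially. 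The problem is that $|K_\alpha(z,w)|$ is not a function of $d(z,w)$ alone: via $|\Psi(z,w)|\sim\sqrt{\delta(z)\delta(w)}\,e^{d(z,w)}$, the kernel carries a $\delta(w)^{-(n+1+\alpha)/2}$ factor, and within a single annulus $A_k$, $\delta(w)$ ranges over an exponentially wide interval $[\,\delta(z)e^{-2(d_0+k+1)},\delta(z)e^{2(d_0+k+1)}\,]$. The integral $\int_{A_k}\delta(w)^{\alpha-(n+1+\alpha)p/2}\,dw$ therefore needs the \emph{lower} bound $\delta(w)\gtrsim\delta(z)e^{-2d_{k+1}}$ from Lemma~\ref{lemma:kobbounddist} before it is even finite when $\alpha-(n+1+\alpha)p/2\le -1$, and the subsequent bookkeeping of exponents has to be done carefully: the geometric gain in $k$ comes from the interplay of $e^{-(n+1+\alpha)pd_k}$ with the $\delta(w)$-lower-bound contribution, not from a naive ``kernel small, volume polynomial'' argument. (Also, the volumes do not ``grow''---$|A_k|\le|\Omega|$---so the heuristic phrasing is off even though the conclusion happens to be harmless.) If you carry out that bookkeeping, the $\delta(z)$-powers do cancel and the geometric sum does converge, but at that point you are implicitly reproving the dichotomy of Lemma~\ref{kobayashishape} inside each annulus, and you also need to handle separately the region $|z-w|\gtrsim\delta_0'$ where the asymptotic expansion is not available and only Kerzman regularity applies (the paper treats this as a separate first term). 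In short: the skeleton is viable and the cancellation does happen, but ``apply the kernel bound and sum a geometric series'' hides the hardest part, which is the $\delta(w)$-dependence within each shell; the paper resolves exactly this with Lemma~\ref{kobayashishape} and a rescaled DCT argument, and any completed version of your plan would have to do the equivalent.

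One last remark: both your route and the paper's rely on the integer-power weight $|\rho|^\alpha$, $\alpha\in\mathbb Z_{\ge0}$, to invoke the weighted Bergman-kernel asymptotics; your proposal doesn't flag where this is used, but the Forelli--Rudin and diagonal estimates you call on are precisely the places where this hypothesis enters.
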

We will prove this Lemma in Section \ref{subsec:kernel}. Assuming it for now, we can complete the proof of Theorem \ref{thm:main}, characterizing the dominating sets for $A^p_\alpha(\Omega)$.
\begin{proof}[Proof of Theorem \ref{thm:main}]
Proposition \ref{prop:suff} from the last section proves (ii) implies (i). (i) implies (iii) is obvious so it remains to prove (iii) implies (ii) using Lemma \ref{lemma:kernel}. Let $c=\inf_{z \in \Omega} \|k_z^{p,\alpha}\|_{L^{p}_{\alpha}(E)}$, and
 $$C_r=\sup_{\substack{z \in \Omega\\ w \in Y(z,r)}}|Y(z,r)|^{1/p} \cdot |k_z^{p,\alpha}(w)| |\rho(w)|^{\alpha/p} .$$
  Pick $r$ such that
	\[ \sup_{z \in \Omega}\|k_z^{p,\alpha}(w)\|_{L^p_\alpha (\Omega \backslash Y(z,r))} < \frac {c} {2^{1/p}} \]
and then
	\[ c^p \le \|k_z^{p,\alpha}\|_{L^p_\alpha(E \cap Y(z,r))}^p + \|k_z^{p,\alpha}\|_{L^p_\alpha(\Omega \backslash Y(z,r))}^p \le C_r^p \frac{|E \cap Y(z,r)|}{|Y(z,r)|} +  \frac {c^p} {2}\]
which shows that $E$ is relatively dense.
\end{proof}

\subsection{Bergman kernel}\label{subsec:kernel}
Is this section, we prove Lemma \ref{lemma:kernel}, establishing estimates (\ref{eq:tail}) and (\ref{eq:linfty}) on the Bergman kernel. Concerning the $L^\infty$ estimate (\ref{eq:linfty}), we first establish it for $z$ away from the boundary. Consider the region 
	\[  U_\ep  = \{ (z,w) \in \bar \Omega \times \bar \Omega : |\rho(z)| + |\rho(w)| + |z-w|^2 \le \ep \}. \]
It is a well-known fact that the function $(z,w) \mapsto k_z(w)$ extends to a  $C^\infty$ map on the closed set $(\bar \Omega \times \bar \Omega) \setminus U_{\ep}$ (meaning all orders of derivatives of this map extend to continuous mappings on $(\bar \Omega \times \bar \Omega) \setminus U_\ep$) \cite{kerzman1972} . Therefore it is bounded. Moreover, $|Y(z,r)| \sim \delta(z)^{n+1}$ which shows for any $r,\ep > 0$ using the fact that $\alpha \geq 0$ and $|\rho(z)| \sim |\rho(w)|$ for $w \in Y(z,r),$

	\[ \sup_{\substack{z,w \in (\Omega \times \Omega) \backslash U_\ep \\ w \in Y(z,r)} } |Y(z,r)|^{1/p} |k_z^{p,\alpha}(w)| |\rho(w)|^{\alpha/p} \le C_{r,\ep}.\] 
On the other hand, pick $\ep$ so small that we have the asymptotic expansion for $(z,w) \in U_\ep$ (see \cite{peloso1994}):
	\[ K_\alpha(z,w) = a(w)|\Psi(z,w)|^{-n-1-\alpha} + E(z,w) ,\]
where $a$ is a non-vanishing smooth function, $\Psi(z,w)$ is a smooth function that is a perturbation of the Levi polynomial, and $E(z,w)$ satisfies $|E(z,w)| \lesssim |\Psi(z,w)|^{-n-1-\alpha+{1/2}} |\log |\Psi(z,w)||.$ In particular, $\Psi$ satisfies 

$$|\Psi(z,w)| \sim |\rho(z)|+|\rho(w)|+|z-w|^2+|\text{Im}\Psi(z,w)|.$$
Then, note that for these $z,w$ there holds $$|K_\alpha(z,w)| \sim  |\Psi(z,w)|^{-n-1-\alpha}\lesssim \delta(z)^{-n-1-\alpha} .$$
Moreover, a computation using the Rudin-Forelli estimates given in \cite{xiawang2021} shows that  provided $p>1$ , 
$$\|K_{\alpha}(z,\cdot)\|_{L^p_\alpha(\Omega)} \sim \delta(z)^{-(n+1+\alpha)/p'}.$$
Finally, we estimate
\begin{align*}
 & \sup_{\substack{z,w \in U_\ep \\ w \in Y(z,r)} } |Y(z,r)|^{1/p} |k_z^{p,\alpha}(w)| |\rho(w)|^{\alpha/p} \\
&  \le C_{r,\ep} \delta(z)^{(n+1)/p} \delta(z)^{(n+1+\alpha)/p'} \delta(z)^{-n-1-\alpha} \delta(z)^{\alpha/p}\\ 
& = C_{r,\ep}, 
\end{align*}
which completes the proof of (\ref{eq:linfty}).

Next, we aim to show (\ref{eq:tail}) holds.
We need the following Lemma concerning the behavior of the Kobayashi metric. Let $z=(z_1,z')$ and $w=(w_1,w')$ be the splitting of $z$ and $w$ into the complex normal and tangential directions based at $\pi(z)$, where $\pi$ denotes the normal projection to the boundary (see \cite{baloghbonk2000}). We will assume $\delta(z)<\varepsilon_0$ so that the properties of Lemma $2.1$ in \cite{baloghbonk2000} are satisfied. Then $z_1,w_1 \in \mathbb{C}$ and $z',w' \in \mathbb{C}^{n-1}$. 

\begin{lemma}\label{kobayashishape} 
Let $\Omega$ be a strongly pseudoconvex domain with $C^2$ boundary. The Kobayashi metric $d$ has the following property: There exists $\ep_0>0$ such that for any $N \in \mathbb{N}$, there exists a radius $R_N \in (0,1)$ such that if $\delta(z) \le \ep_0$ and $d(z,w) \geq \tanh^{-1} R_N$, then

$$|z'-w'|^2+ |z_1-w_1| \geq  N \delta(z) \text{ or } \delta(w)< \frac{1}{N} \delta(z).$$
\end{lemma}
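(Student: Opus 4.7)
The plan is to prove the contrapositive: assume $\delta(z) \le \ep_0$, $|z'-w'|^2 + |z_1-w_1| < N\delta(z)$, and $\delta(w) \ge \delta(z)/N$, and show that $d(z,w)$ is bounded by a constant depending only on $N$ and $\Omega$. Since $\tanh^{-1}:(0,1)\to(0,\infty)$ is a bijection, one can then choose $R_N\in(0,1)$ with $\tanh^{-1}R_N$ exceeding that constant, which forces $d(z,w)<\tanh^{-1}R_N$ and yields the statement.

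The key input is the Balogh--Bonk upper estimate for the Kobayashi distance on a smoothly bounded strongly pseudoconvex domain (see \cite{baloghbonk2000}). In the splitting coordinates based at $\pi(z)$, this estimate takes the schematic form
\[ d(z,w) \le \tfrac{1}{2}\log\!\left(1+\frac{C\bigl(|z_1-w_1|+|z'-w'|^2\bigr)}{\sqrt{\delta(z)\delta(w)}}\right)+C_0, \]
valid whenever $\delta(z)$ is small and $w$ lies in a suitable Euclidean neighborhood of $z$ (so that the splitting at $\pi(z)$ is admissible for $w$). I would shrink $\ep_0$ if necessary so that, under our hypotheses $|z_1-w_1|+|z'-w'|^2<N\delta(z)$ and $\delta(w)\ge\delta(z)/N$, the pair $(z,w)$ lies in the regime where this estimate applies. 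Substituting the assumed bounds, the numerator is at most $CN\delta(z)$ and the denominator at least $\delta(z)/\sqrt{N}$, so the argument of the logarithm is bounded above by a constant $M_N$ depending only on $N$ and $\Omega$. Thus $d(z,w)\le \tfrac{1}{2}\log(1+M_N)+C_0=:K_N$.

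Taking $R_N:=\tanh(K_N+1)\in(0,1)$ then completes the proof of the contrapositive, and hence of the lemma.

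The main obstacle is verifying the precise form and applicability of the Balogh--Bonk upper bound in the $(z_1,z')$-splitting when $w$ need not be extremely close to $z$ Euclideanly: one has to check that the anisotropic geometry based at $\pi(z)$ remains a valid model for measuring $d(z,w)$ under only the two scalar hypotheses given. If the cited estimate is stated in coordinates based at $\pi(w)$ or requires $|z-w|$ to be controlled by a specific constant, an auxiliary step will be needed, either by comparing the two splittings via the regularity of the normal projection $\pi$ (Lemma~2.1 in \cite{baloghbonk2000}) or by inserting an intermediate point $\tilde w$ on the normal line through $\pi(z)$ at the same boundary distance as $w$ and bounding $d(z,\tilde w)$ and $d(\tilde w,w)$ separately using the one-variable Poincar\'e estimate on the normal disc and a tangential estimate respectively. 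Either way the final upper bound depends only on $N$ and $\Omega$, which is all that the conclusion requires.
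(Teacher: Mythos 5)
Your proposal proves the contrapositive via the \emph{upper} Balogh--Bonk estimate, whereas the paper works directly: it assumes $d(z,w)\geq R$, uses the \emph{lower} Balogh--Bonk bound to force $g(z,w)$ to be large, and then runs a case analysis on which piece of the numerator $d_H(\pi(z),\pi(w))+h(z)\vee h(w)$ is large, converting to Euclidean coordinates via the Box--Ball estimate. Your contrapositive route is in some ways cleaner --- you only bound one quantity from above rather than juggling several sub-cases --- and it relies on exactly the same ingredients (Balogh--Bonk, Box--Ball, regularity of $\pi$), so it is a genuine and viable alternative.

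There is, however, a concrete gap in your schematic estimate. Writing $g(z,w)=\log\Bigl[\bigl(d_H(\pi(z),\pi(w))+h(z)\vee h(w)\bigr)^2/\bigl(h(z)h(w)\bigr)\Bigr]$ and using Box--Ball plus the triangle inequality (as in the paper's Case~1a) to replace $d_H$ by the anisotropic Euclidean expression, the honest upper bound one gets is
\[
d(z,w)\;\le\;\log\!\left(\frac{C\bigl(|z_1-w_1|+|z'-w'|^2\bigr)}{\sqrt{\delta(z)\delta(w)}}\;+\;C\sqrt{\frac{\delta(z)\vee\delta(w)}{\delta(z)\wedge\delta(w)}}\right)+C_0,
\]
not the ``$1+\cdots$'' form you wrote (and the prefactor is $1$, not $\tfrac12$, after squaring). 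The second summand is \emph{not} controlled by your two contrapositive hypotheses alone: $\delta(w)\ge\delta(z)/N$ bounds one ratio, but nothing you have assumed a priori rules out $\delta(w)$ being enormous compared to $\delta(z)$, in which case $g(z,w)\to\infty$. You must also establish the reverse inequality $\delta(w)\lesssim N\,\delta(z)$. This follows from Taylor-expanding the defining function at $z$: in the splitting coordinates the normal increment $|\langle z-w,\bar\partial\rho(z)\rangle|$ is comparable to $|z_1-w_1|$, so $|\rho(w)|\le|\rho(z)|+C|z_1-w_1|+C|z-w|^2\lesssim N\delta(z)$ once $\ep_0$ is small. The paper makes exactly this Taylor-expansion observation (in its Case 2a) to close the corresponding case, and you should insert it here to complete your argument. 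Once that is added and the formula corrected, the contrapositive bound $d(z,w)\le C\log N+C_0$ follows, and choosing $R_N=\tanh(C\log N+C_0+1)$ finishes the proof as you describe.
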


To prove this, we will use the following bounds derived by Balogh and Bonk for the Kobayashi metric \cite[Cor. 1.3]{baloghbonk2000}. There exists $C>0$ such that

$$g(z,w)-C \leq d(z,w) \leq g(z,w)+C$$

where $$g(z,w):= 2 \log{\left[ \frac{d_{H}(\pi(z),\pi(w))+h(z) \vee h(w)}{\sqrt{h(z) h(w)}}  \right]}.$$

Here $\pi(z)$ denotes the normal projection of $z$ to the boundary, $d_H$ denotes the Carnot-Carath\`{e}odory metric on $\partial \Omega$ and $h(z) \vee h(w)= \max\{h(z),h(w)\},$ where $h(z)=\delta(z)^{1/2}$. The Carnot-Carath\`eodory metric $d_H$, also called the horizontal metric, is defined by
	\[ \begin{array}{rcl}
		d_H(p,q) = \inf \{ \int_0^1 L_\rho(\gamma(t),\gamma'(t)) \, dt : &\gamma \in C^1([0,1];\partial\Omega), \\
\gamma'(t) \in H_{\gamma(t)} \partial \Omega		& \gamma(0)=p, \, \gamma(1)=q \}& p,q \in \partial \Omega.\end{array} \]
Here $L_\rho$ denotes the Levi form, and  $H_{\gamma(t)}$ the ``horizontal" or complex tangential subspace at the boundary point $\gamma(t).$ The important estimate for us concerning $d_H$ is the Box-Ball Estimate \cite[Prop. 3.1]{baloghbonk2000} which states that there exist $C,\ep_0>$ such that
	\begin{equation}\label{eq:box-ball} \operatorname{Box}(p,\ep/C) \subset B_H(p,\ep) \subset \operatorname{Box}(p,C\ep) \end{equation}
where $B_H(p,\ep) = \{q \in \partial \Omega : d_H(p,q) \le \ep \}$ and $\operatorname{Box}(p,\ep) = \{ q \in \partial \Omega : |(p-q)_1| \le \ep^2, \, |(p-q)'| \le \ep \}$ where the spltting $z=(z_1,z')$ into complex normal and tangential directions is done at $p$.

We remark that if $w$ is far from the boundary, the projection $\pi$ may not be uniquely defined, but this does not cause problems. In this case, we simply choose $\pi(w)$ to be a point on the boundary satisfying $|\pi(w)-w|=\delta(w).$ In this way, we can extend $\pi$ to a (non-unique) map $\Omega \rightarrow \partial \Omega$ that will satisfy the above estimate.

\begin{proof}[Proof of Lemma \ref{kobayashishape}]
Suppose $d(z,w) \geq R$. Then $g(z,w) \geq R'$, where $R'=R-C.$ Equivalently,

$$\left[ \frac{d_{H}(\pi(z),\pi(w))+h(z) \vee h(w)}{\sqrt{h(z) h(w)}}  \right]\geq R'',$$ where $R''=\exp{(R'/2)}.$ This implies either $\frac{d_{H}(\pi(z),\pi(w))}{\sqrt{h(z) h(w)}} \geq \frac{R''}{2}$ (Case $1$) or $\frac{h(z) \vee h(w)}{\sqrt{h(z) h(w)}} \geq \frac{R''}{2}$ (Case $2$). 

\textbf{Case 1a:}

In Case $1$, we first consider the further subcase where $\delta(w) > \frac{1}{R''} \delta(z).$ Note that the condition for Case $1$ implies

$$d_H(\pi(z),\pi(w)) \geq \frac{R''}{2} \delta(z)^{1/4}\delta(w)^{1/4}$$
and the sub-condition further implies
$$d_H(\pi(z),\pi(w)) \geq \frac{(R'')^{3/4}}{2} \delta(z)^{1/2}.$$

This is a good bound. We may assume without loss of generality that $\frac{(R'')^{3/4}}{2} \delta(z)^{1/2}\leq \varepsilon_0,$ where $\varepsilon_0$ is chosen so that (\ref{eq:box-ball}) holds. If not, we may replace $R$ with a sufficiently large value to force one of the other cases (note that the horizontal metric $d_H$ is a bounded function). 

 Assuming the reduction, write $\pi(z)=(\pi(z)_1,\pi(z)')$ and $\pi(w)=(\pi(w)_1,\pi(w)')$ , where the decomposition of a point $p \in \partial \Omega$ by $p=(p_1,p')$ is given by splitting into the complex normal and tangential directions at $\pi(z).$ Similarly, write $z=(z_1,z')$ and $w=(w_1,w')$ with the splitting based at $\pi(z).$ Then we have, using (\ref{eq:box-ball}):

$$ |\pi(z)_1-\pi(w)_1| \geq C (R'')^{3/2} \delta(z), \text{  or  } |\pi(z)'-\pi(w)'| \geq C (R'')^{3/4} \delta(z)^{1/2},$$
where $C$ is an independent constant. In the first case, we obtain, using the triangle inequality:

\begin{align*}
|z_1-w_1| & \geq |\pi(z)_1-\pi(w)_1|-|\pi(z)_1-z_1|-|\pi(w)_1-w_1| \\
& \geq C (R'')^{3/2} \delta(z)- \delta(z)-\delta(w)\\
& = (C (R'')^{3/2}-1)\delta(z)-\delta(w).
\end{align*} 
Now, if $\delta(w)>\frac{(C (R'')^{3/2}-1)}{2}\delta(z),$ then we can proceed as in Case $2a$ (see below). Otherwise, we have 
$$|z_1-w_1|\geq \frac{(C (R'')^{3/2}-1)}{2}\delta(z),$$
and we are done in this case. 

In the second case, we obtain 

\begin{align*}
|z'-w'| & \geq |\pi(z)'-\pi(w)'|-|\pi(z)'-z'|-|\pi(w)'-w'| \\
& \geq C (R'')^{3/4} \delta(z)^{1/2}- \delta(z)-\delta(w)\\
& \geq (C (R'')^{3/4}-c)\delta(z)^{1/2}-\delta(w).
\end{align*} 
where $c$ is some constant that only depends on the domain $\Omega$ and the defining function $\rho.$ We can split into further subcases as before depending on the size of $\delta(w)$ to obtain the desired result.

\textbf{Case 1b:} 

The second sub-case of Case $1$ implies
$$\delta(w) \leq \frac{1}{R''} \delta(z),$$ so we are done in that case. 

Now we turn to Case $2$.

\textbf{Case 2a:}

First suppose that $h(z) \leq h(w).$ Then the condition reads
$$\sqrt{h(w)} \geq \frac{R''}{2} \sqrt{h(z)}$$
or equivalently
$$\delta(w) \geq \frac{(R'')^4}{16} \delta(z).$$
This is the bound we want. Indeed, 
Taylor expansion of $\rho$ shows that if $|z-w|^2+ |\langle z-w,\overline{\partial}\rho(z) \rangle|< R \delta(z)$, then we have

\begin{align*}
\delta(w) & \lesssim \rho(w)\\
& \leq |\rho(z)|+|\rho(z)-\rho(w)|\\
& \leq c(1+R) \delta(z),
\end{align*}
where $c$ is a constant that depends on the domain $\Omega.$ The contrapositive of this argument then shows that the bound we obtained provides a desired lower bound on $|z-w|^2+ |\langle z-w,\overline{\partial}\rho(z) \rangle|.$

\textbf{Case 2b:}

The final sub-case to consider is when $h(z) \geq h(w).$ In this case, we can directly verify that
$$\delta(w) \leq \frac{16}{(R'')^4} \delta(z).$$
\end{proof}

Now we are ready to prove (\ref{eq:tail}) in Lemma \ref{lemma:kernel}.
Let $\delta_0= \delta(z_0).$ We now apply Lemma \ref{kobayashishape}. We assume $n,p >1$ (for the case $n=1$, make the obvious modifications to the proof). Using either the previously mentioned Rudin-Forelli estimates or the main theorem in \cite{kerzman1972}, it is straightforward to see
	\[ \lim_{r \to 1^-} \sup_{\delta(z) \ge \ep_0} \|k_z^{p,\alpha}\|_{L^p_\alpha(\Omega \backslash Y(z,r)} =0 \]
so we can also assume that $\delta_0<\varepsilon_0.$ By a unitary rotation and translation, we can assume that the coordinates $w=(w_1,w')$ are centered at $z_0$ and  split into the complex normal and tangential directions at $\pi(z_0)$. 
 Moreover, we note that well-known asymptotics for the Bergman kernel in the strongly pseudoconvex case together with the Rudin-Forelli estimates (see \cite{peloso1994}, \cite{xiawang2021}) give the estimate in these coordinates, for any $w \in \Omega$:

\begin{align*}|k_{z_0}^{p,\alpha}(w)| & \lesssim \frac{\delta_0^{(n+1+\alpha)/p'}}{(\delta_0+|\rho(w)|+ |\langle z_0-w,\overline{\partial}\rho(z_0) \rangle|+|z_0-w|^2)^{(n+1+\alpha)}}    \\
& \lesssim \frac{\delta_0^{(n+1+\alpha)/p'}}{(\delta_0+|\rho(w)|+|w_1|+|w'|^2)^{(n+1+\alpha)}}.
\end{align*}

We additionally have
$$
|k_{z_0}^{p,\alpha}(w)|  \lesssim \frac{\delta_0^{(n+1+\alpha)/p'}}{(\delta_0+|\rho(w)|+ |\text{Im} \Psi(z_0,w)|+|z_0-w|^2)^{(n+1+\alpha)}} 
$$
We use the coordinates given in Lemma 2.6 in \cite{peloso1994}. In particular, there exist small positive numbers $\varepsilon_0'$ and $\delta_0'$ so that for each $z_0$ satisfying $|\rho(z_0)|< \varepsilon_0'$, there exists a $C^\infty$ diffeomorphism  $t(\cdot,z_0)$ defined on the Euclidean ball centered at $z_0$ with radius $\delta_0'$ so that the (real) Jacobian of $t(\cdot,z_0)$ is bounded above and the Jacobian determinant is bounded from below (with uniform bounds independent of $z_0$). Moreover, the coordinates $(t_1,t_2,t')=t(w,z_0)$ satisfy
$$t_1(w,z_0)=-\rho(w), \quad t_2(w,z_0)=\text{Im}\Psi(z_0,w).$$
Here $t_1 \in \mathbb{R}^{+}, t_2 \in \mathbb{R}$, and $t' \in \mathbb{C}^{n-1}.$
We may assume without loss of generality that $\varepsilon_0$ is sufficiently small so that if $z \in \Omega$ and $\delta(z)<  \varepsilon_0$, then we have $|\rho(z)|< \varepsilon_0'$, where $\varepsilon_0'$ is as in the Lemma. Choose $R_N \in (0,1)$ as in Lemma \ref{kobayashishape}. Taylor series arguments together with Lemma \ref{kobayashishape} and the fact that $t(w,z_0)$ is a diffeomorphism imply that there exists a $c>0$ so that if $d(z,w)\geq \tanh^{-1}R_N,$ then either $t_1+|t_2|+|t'|^2 \geq 3c N \delta_0$ or $t_1< \frac{c}{N} \delta_0.$ We then integrate in this coordinate system and split into the corresponding cases:
\begin{align*}
& \int_{\Omega \setminus Y(z_0,R_N)} |k_{z_0}^{p,\alpha}(w)|^p |\rho(w)|^{\alpha} \mathop{dA(w)}\\
& \lesssim \int_{{\Omega \setminus Y(z_0,R_N)} \cap \{|z_0-w| \geq \delta_0'\}} \delta_0^{(n+1+\alpha)(p-1)} |\rho(w)|^\alpha |K_{\alpha}(z_0,w)|^p \, dA(w)\\
& + \int_{c N \delta_0}^{\infty}  \int_{-\infty}^\infty \int_{0<|t'|<\infty}  \frac{ \delta_0^{(n+1+\alpha)(p-1)}t_1^\alpha}{(\delta_0+t_1+|t_2|+|t'|^2)^{p(n+1+\alpha)}} \, dA(t') \, dt_2 \,dt_1 \\
&  + \int_{0}^{\infty}  \int_{|t_2|> c N \delta_0} \int_{0<|t'|<\infty}  \frac{ \delta_0^{(n+1+\alpha)(p-1)}t_1^\alpha}{(\delta_0+t_1+|t_2|+|t'|^2)^{p(n+1+\alpha)}} \, dA(t') \, dt_2 \,dt_1\\
&  + \int_{0}^{\infty}  \int_{-\infty}^{\infty} \int_{\sqrt{c N \delta_0} <|t'|<\infty}  \frac{ \delta_0^{(n+1+\alpha)(p-1)}t_1^\alpha}{(\delta_0+t_1+|t_2|+|t'|^2)^{p(n+1+\alpha)}} \, dA(t') \, dt_2 \,dt_1\\
&  + \int_{0}^{\frac{c}{N} \delta_0}  \int_{-\infty}^\infty \int_{0<|t'|<\infty}  \frac{ \delta_0^{(n+1+\alpha)(p-1)}t_1^\alpha}{(\delta_0+t_1+|t_2|+|t'|^2)^{p(n+1+\alpha)}} \, dA(t') \, dt_2 \,dt_1.
\end{align*}

We will show  that the first term can be made as small as we want independently of $\delta_0.$ So let $\varepsilon>0$. Write $dA_\alpha=|\rho|^\alpha dA$ and note $dA_\alpha$ is a finite measure. The first term is easily seen to be controlled by $\delta_0^{(n+1+\alpha)(p-1)} C_{p, \alpha, \Omega} A_\alpha(\Omega \setminus Y(z_0, R_N)),$ where $C_{p,\alpha, \Omega}$ is a constant depending on only depending on $p, \alpha$ and $\Omega.$ If $\delta_0^{(n+1+\alpha)(p-1)} C_{p,\alpha,\Omega} A_\alpha(\Omega)<\varepsilon,$ then we are done. Otherwise $\delta_0 \geq \varepsilon'$, where $\varepsilon'$ depends only on $ \varepsilon, p, \alpha,$ and $\Omega.$ Since $\{z \in \Omega: \delta(z) \geq \varepsilon'\}$ is a compact subset of $\Omega$, we may choose $N$ (depending only on $\varepsilon'$) so that  $\delta_0^{(n+1+\alpha)(p-1)} C_{p,\alpha,\Omega} A_\alpha(\Omega \setminus Y(z_0, R_N))<\varepsilon.$

On the other hand, for the second term we have

\begin{align*}
& \int_{c N \delta_0}^{\infty}  \int_{0}^\infty \int_{0<|t'|<\infty}  \frac{ \delta_0^{(n+1+\alpha)(p-1)}t_1^\alpha}{(\delta_0+t_1+|t_2|+|t'|^2)^{p(n+1+\alpha)}} \, dA(t') \, dt_2 \,dt_1\\
& =  C_n \int_{c N \delta_0}^{\infty} \int_{0}^\infty \int_{0}^{\infty} \frac{ \delta_0^{(n+1+\alpha)(p-1)} t_1^\alpha r^{2n-3}}{(\delta_0+t_1+|t_2|+r^2)^{p(n+1+\alpha)}}  \,d r \,d t_2 \,d t_1\\
& =  \frac{C_n}{2} \int_{c N \delta_0}^{\infty} \int_{0}^\infty \int_{0}^{\infty} \frac{ \delta_0^{(n+1+\alpha)(p-1)} t_1^\alpha r'^{(n-2)}}{(\delta_0+t_1+|t_2|+r')^{p(n+1+\alpha)}}  \,d r' \,d t_2 \,d t_1\\
& =  \frac{C_n}{2} \int_{c N \delta_0}^{\infty} \int_{0}^\infty \int_{0}^{\infty}  \frac{ \delta_0^{(n+1+\alpha)(p-1)+\alpha+n-2} \left(t_1/\delta_0\right)^{\alpha}  \left(r'/\delta_0\right)^{(n-2)} \, d r' \, d t_2 \, d t_1}{\delta_0^{p(n+1+\alpha)}\left[1+\left(t_1/\delta_0\right)+\left(|t_2|/\delta_0\right)+\left(r'/\delta_0\right)\right]^{p(n+1+\alpha)}}  \\
& \leq  C_n' \int_{cN}^{\infty} \int_{0}^{\infty} \int_{0}^{\infty}  \frac{v_1 ^{\alpha} u^{n-2}}{\left[1+ v_1+v_2+u\right]^{p(n+1+\alpha)}}  \, d u \, d v_2 \, d v_1. \stepcounter{equation}\tag{\theequation}\label{eq1}
\end{align*}

It is a simple matter to check, using the fact that $p>1$ and $\alpha \geq 0,$ that 
$$\int_{0}^{\infty} \int_{0}^{\infty} \int_{0}^{\infty}  \frac{v_1 ^{\alpha} u^{n-2}}{\left[1+ v_1+v_2+u\right]^{p(n+1+\alpha)}}  \, d u \, d v_2 \, d v_1<\infty.$$
Then, using the Dominated Convergence Theorem, we deduce that \eqref{eq1} goes to $0$ as $N \rightarrow \infty.$

Using an entirely similar integration procedure, the third term can be bounded above by the following integral
$$C_n' \int_{0}^{\infty} \int_{cN}^{\infty} \int_{0}^{\infty}  \frac{v_1 ^{\alpha} u^{n-2}}{\left[1+ v_1+v_2+u\right]^{p(n+1+\alpha)}}  \, d u \, d v_2 \, d v_1$$

while the fourth term can be controlled by
$$C_n' \int_{0}^{\infty} \int_{0}^{\infty} \int_{cN}^{\infty}  \frac{v_1 ^{\alpha} u^{n-2}}{\left[1+ v_1+v_2+u\right]^{p(n+1+\alpha)}}  \, d u \, d v_2 \, d v_1$$

and finally the fifth term can be bounded by

$$C_n' \int_{0}^{\frac{c}{N}} \int_{0}^{\infty} \int_{0}^{\infty}  \frac{v_1 ^{\alpha} u^{n-2}}{\left[1+ v_1+v_2+u\right]^{p(n+1+\alpha)}}  \, d u \, d v_2 \, d v_1.$$
All of these integrals can be seen to approach $0$ as $N \rightarrow \infty$ independently of $z_0$ by the Dominated Convergence Theorem. This establishes the conclusion of Lemma \ref{lemma:kernel}.

\section{Lower Dimensional Sets}\label{sec:lower-dim}

In this section we will prove Theorems \ref{thm:lower-dim-kob} and \ref{thm:lower-dim-euclid}. The good/bad decomposition used to go from the local estimate to global estimate in Section \ref{subsec:suff} carries over to the lower-dimensional setting. So it is enough to prove the analogous local estimate. To do so, we will use the following estimates on the $\H^{2n-2}$ measure of the zero set of a holomorphic function.

\begin{lemma}\label{lemma:zero}
Let $Q \subset \C^n$ be a Euclidean cube  and let $\ell(Q)$ and $c(Q)$ denote its side length and center, respectively . For any $f \in \Hol(2Q)$,
	\[ \H^{2n-2}(\{f=0\} \cap Q) \le C \ell(Q)^{2n-2} \log \frac{\sup_{2Q}|f|}{\sup_{Q}|f|}. \]
Moreover, if $f(c(Q))=0$, then
	\[ \H^{2n-2}(\{f=0\} \cap Q) \ge c \ell(Q)^{2n-2}.\]
If $\Omega$ is a smoothly bounded strongly pseudoconvex domain, then there exists $C,\ep>0$ such that for all $z \in \Omega_{\ep}$ and $f \in \Hol(Y(z,R))$,
	\[ \H^{2n-2}(\{f=0\} \cap Y(z,r)) \le C \delta(z)^{n-1} \log \frac{\sup_{Y(z,R)}|f|}{\sup_{Y(z,r)}|f|} .\]
\end{lemma}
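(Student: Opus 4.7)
The proof has three ingredients, one for each of the three inequalities in the lemma.

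For the upper bound on a Euclidean cube $Q$, I would use the Poincar\'e--Lelong formula, which identifies the current of integration $[V]$ over $V=\{f=0\}$ with $\frac{i}{\pi}\partial\bar\partial\log|f|$. By Wirtinger's equality, $\mathcal{H}^{2n-2}(V\cap Q)=\frac{1}{(n-1)!}\int_{V\cap Q}\omega^{n-1}$, where $\omega$ is the standard K\"ahler form on $\mathbb{C}^n$. Choose a cutoff $\chi$ with $\chi\equiv 1$ on $Q$, $\operatorname{supp}\chi\subset(3/2)Q$, and $\|\nabla^2\chi\|_\infty\le C\ell(Q)^{-2}$. Stokes' theorem converts $\int\chi[V]\wedge\omega^{n-1}$ into $\frac{1}{\pi}\int\log|f|\,dd^c\chi\wedge\omega^{n-1}$, which is bounded in absolute value by $C\ell(Q)^{-2}\int_{(3/2)Q}|\log|f||\,dV$. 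After normalizing $\sup_{2Q}|f|=1$, so that $\log|f|\le 0$, the sub-mean-value inequality for the plurisubharmonic function $\log|f|$---applied at a point $z_0\in Q$ with $|f(z_0)|\ge\tfrac12\sup_Q|f|$, and extended from a small ball around $z_0$ to all of $(3/2)Q$ via Hadamard's three-spheres inequality---gives $\int_{(3/2)Q}(-\log|f|)\,dV\le C\ell(Q)^{2n}\log(\sup_{2Q}|f|/\sup_Q|f|)$. Dividing by $\ell(Q)^2$ yields the claim.

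For the lower bound, I would invoke the classical Lelong inequality for complex analytic subvarieties: any pure-dimensional analytic subvariety $V\subset\mathbb{C}^n$ of complex dimension $n-1$ passing through a point $p$ satisfies $\mathcal{H}^{2n-2}(V\cap B(p,\rho))\ge c_n \rho^{2n-2}$ for every $\rho$ within the domain where $V$ is defined. This is a standard consequence of the monotonicity of the Lelong number $\nu(V,p)\ge 1$ at a point of $V$. Applied at $p=c(Q)$ with $\rho=\ell(Q)/2$, so that $B(p,\rho)\subset Q$, it immediately gives the claim.

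For the Kobayashi-ball version, the idea is to rescale to the Euclidean cube case via an anisotropic affine map. By Lemma~\ref{lemma:kob-geo}, up to the unitary rotation $U(z)$, $Y(z,r)$ is sandwiched between polydiscs centered at $z$ of radii $\sim\delta(z)$ in the complex normal direction and $\sim\delta(z)^{1/2}$ in each complex tangential direction, and similarly for $Y(z,R)$. Let $T:w\mapsto \Lambda_{\delta(z)} U(z)(w-z)$, where $\Lambda_{\delta(z)}$ divides the normal coordinate by $\delta(z)$ and each tangential coordinate by $\delta(z)^{1/2}$. Then $T(Y(z,r))$ is comparable to a polydisc of bounded size, and $T(Y(z,R))$ to a fixed enlargement thereof. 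Applying the Euclidean estimate to $g=f\circ T^{-1}$ controls $\mathcal{H}^{2n-2}(\{g=0\}\cap T(Y(z,r)))$ by $C\log(\sup_{Y(z,R)}|f|/\sup_{Y(z,r)}|f|)$. To pull back under $T^{-1}$, the key fact is that the $(2n-2)$-dimensional Hausdorff measure scales by at most the product of the top $2n-2$ real singular values of $T^{-1}$. These singular values are $\delta(z)$ (multiplicity $2$, from the normal direction) and $\delta(z)^{1/2}$ (multiplicity $2n-2$, tangential); since $\delta(z)<1$, the $2n-2$ largest are all the $\delta(z)^{1/2}$'s, with product $\delta(z)^{n-1}$. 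This yields the stated prefactor.

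The main obstacle is the subharmonic integral estimate in the first paragraph---bounding the average of $-\log|f|$ on $(3/2)Q$ by the logarithmic doubling index of $f$. The sub-mean-value inequality alone only delivers this bound on a small ball around a well-chosen point, and extending it to all of $(3/2)Q$ requires careful iteration via Hadamard three-spheres or the Poisson--Jensen representation. This step is classical but technical, and is precisely the holomorphic analogue of the harmonic-function estimate underlying the work of Logunov and Malinnikova cited in the paper's outline.
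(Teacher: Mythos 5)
For the two Euclidean statements, you give a from-scratch argument (Poincar\'e--Lelong plus Stokes and a subharmonic integral estimate for the upper bound; Lelong's monotonicity inequality for the lower bound), whereas the paper simply refers to Ronkin's textbook. Both routes are standard, and your singular-value computation correctly explains why the prefactor for the Kobayashi statement must be $\delta(z)^{n-1}$.

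However, the Kobayashi part of your sketch has a real gap. The phrase ``applying the Euclidean estimate to $g=f\circ T^{-1}$ controls $\H^{2n-2}(\{g=0\}\cap T(Y(z,r)))$ by $C\log(\sup_{Y(z,R)}|f|/\sup_{Y(z,r)}|f|)$'' requires a single cube $Q$ with $T(Y(z,r))\subset Q$ and $2Q\subset T(Y(z,R))$. After rescaling, $T(Y(z,r))$ is sandwiched between the fixed polydiscs $P(0,a(r),b(r))$ and $P(0,A(r),B(r))$, and $T(Y(z,R))$ between $P(0,a(R),b(R))$ and $P(0,A(R),B(R))$ (Lemma~\ref{lemma:kob-geo}); for such a cube to exist one needs roughly $2\max\{A(r),B(r)\}\le\min\{a(R),b(R)\}$, which need not hold for a given pair $r<R$ (take $r$ close to $R$). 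If you instead cover $T(Y(z,r))$ by several such cubes and sum, the local logs $\log(\sup_{2Q_j}|g|/\sup_{Q_j}|g|)$ do \emph{not} sum to the global log of the doubling ratio, because on some of the $Q_j$'s the denominator $\sup_{Q_j}|g|$ can be very small. The paper circumvents exactly this by a pigeonhole step (find one polydisc $\tfrac14 P^*(z_0)$ carrying a fixed fraction of $\H^{2n-2}(\{f=0\}\cap Y(z,r))$), applying Jensen's formula for \emph{anisotropic} polydiscs there (which already produces $\delta(z)^{n-1}$ without a separate Hausdorff scaling argument), and then---crucially---using Lemma~\ref{lemma:local} (the Carleman-based Remez estimate with $p=\infty$) to bound $|f(z_1)|=\sup_{\frac14 P^*(z_0)}|f|$ from below by $C^{-CN}\sup_{Y(z,r)}|f|$. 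Your sketch does not acknowledge the need for such a propagation-of-smallness ingredient, which is the essential missing step; without it, the $\sup_{Q}|g|$ (or $|f(z_1)|$) in the denominator cannot be related to $\sup_{Y(z,r)}|f|$.
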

\begin{proof}
The first two statements are classical and we refer to the textbook \cite[p. 230]{ronkin-book}.
By the argument of (\ref{eq:sep}), for each $w \in Y(z,r)$, we obtain a suitably rotated and translated polydisc 
	\[ P^*(w) \subset Y(w,\tfrac{R-r}{2}) \subset Y(z,\tfrac{R+r}{2}) .\]  Let $\frac 14 P^*(z_0)$ denote the polydisc obtained from $P^*(z_0)$ by scaling its dimensions in each complex direction by a factor of $\frac{1}{4}.$
Using the fact that $Y(z,r)$ is contained in a large polydisc, the pigeonhole principle provides a $z_0 \in Y(z,r)$ and $c_0>0$ such that 
	\[ \H^{2n-2}(\{f=0\} \cap \frac 14 P^*(z_0)) \ge c_0 \H^{2n-2}(\{f=0\} \cap Y(z,r)).\]
Let $z_1 \in \frac 14 P^*(z_0)$ such that $|f(z_1)| = \sup_{\frac 14 P^*(z_0)} |f|$. Then, there exists $C>0$ from Jensen's formula for polydiscs \cite[Thm 4.2.5]{ronkin-book} such that
	\[ \H^{2n-2}(\{f=0\} \cap \frac 12 P^*(z_1)) \le C \delta(z)^{n-1}\log \frac{\sup_{ P^*(z_1)}|f|}{|f(z_1)|}.\]
On one hand, $\H^{2n-2}(\{f=0\} \cap \frac 12 P^*(z_1)) \ge \H^{2n-2}(\{f=0\} \cap \frac 14  P^*(z_0)) \ge c_0 \H^{2n-2}(\{f=0\} \cap Y(z,r))$. At the same time, we want to connect $|f(z_1)|$ with $\sup_{Y(z,r)}|f|$. Applying Lemma \ref{lemma:local} with $p=\infty$, 
	\[ |f(z_1)| = \sup_{\tfrac 14 P^*(z_0)} |f| \ge C^{-CN(\frac{R+r}{2},R)} \sup_{Y(z,\frac{R+r}{2})} |f| \ge C^{-CN(\frac{R+r}{2},R)} \sup_{Y(z,r)} |f|. \]
Therefore,
	\[\begin{aligned} \log \frac{\sup_{P^*(z_1)}|f|}{|f(z_1)|} &\le \log\frac{\sup_{Y(z,R)}|f|}{C^{-CN(\frac{R+r}{2},R)} \sup_{Y(z,r)}|f|} \\
		&= \log \frac{\sup_{Y(z,R)}|f|}{\sup_{Y(z,r)}|f|} + C \log \frac{\sup_{Y(z,R)}|f|}{\sup_{Y(z,\frac{R+r}{2})}|f|}\log C\\
		& \le (1+C \log C) \log \frac{\sup_{Y(z,R)}|f|}{\sup_{Y(z,r)}|f|}. \end{aligned}\]
\end{proof}

We have the following local estimates for Kobayashi balls.

\begin{lemma}\label{lemma:local-lower}
Let $\Omega \subset \C^n$ be a smoothly bounded strongly pseudoconvex domain, $R>r>0$, $1 \le p \le \infty$, $N^*,\nu>0$. 
There exists $C,c,\ep>0$ such that for any $0<\gamma<c$,
	\begin{equation}\label{eq:remez-lower} \|f\|_{L^p(Y(z,r))} \le \left( \frac{C}{\gamma}  \right)^{C} \|f\|_{L^p(E,\H^{2n-2+\nu})}\end{equation}
for all $z \in \Omega_\ep$, $f \in \Hol(Y(z,R))$ with $N_p(z,f,r,R) \le N^*$, and $E \subset Y(z,r)$ satisfying
	\[ \dfrac{|Y(z,r)|^{\frac{n-1}{n(n+1)}}\H^{2n-2+\nu}(E)}{|Y(z,r)|^{(2n-2+\nu)/2n}} \ge \gamma. \]
\end{lemma}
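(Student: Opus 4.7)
The proof plan adapts the Logunov--Malinnikova strategy \cite{logunov2018quantitative} from harmonic functions to holomorphic functions. The core of the argument is a sublevel-set estimate for $|f|$, and the exponent $(n-1)/(n(n+1))$ in the density hypothesis is engineered to match the anisotropic geometry of $Y(z,r)$ from Lemma \ref{lemma:kob-geo}.

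The first step is to prove the key sublevel-set estimate: for $M = \sup_{Y(z,r)} |f|$ and $0 < \epsilon < 1$,
\begin{equation*}
\H^{2n-2+\nu}\bigl(\{w \in Y(z,r) : |f(w)| < \epsilon M\}\bigr) \le C \epsilon^{\nu/N^*} \cdot |Y(z,r)|^{(2n-2+\nu)/(2n) - (n-1)/(n(n+1))},
\end{equation*}
where the normalization on the right is precisely the one appearing in the hypothesis. The proof proceeds by combining two ingredients: (a) the zero-set bound from Lemma \ref{lemma:zero}, which gives $\H^{2n-2}(\{f=0\} \cap Y(z,r)) \le C\delta(z)^{n-1} N^*$; and (b) the fact that the local vanishing order of $f$ at any point of $Y(z,r)$ is bounded by $N^*$ (via Cauchy estimates on the polydiscs from Lemma \ref{lemma:kob-geo} together with the doubling-index hypothesis). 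Consequently, the sublevel set $\{|f| < \epsilon M\}$ lies in an anisotropic tube of thickness $\sim \epsilon^{1/N^*}$ about the zero set, with thickness measured in the normal direction at scale $\delta(z)$ and in the tangential directions at scale $\delta(z)^{1/2}$, and Federer's tube formula applied in the anisotropic metric yields the claimed bound.

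Next, choose $\epsilon = (c\gamma)^{N^*/\nu}$ so that $\H^{2n-2+\nu}$ of the sublevel set is at most half the density lower bound on $\H^{2n-2+\nu}(E)$. The complementary subset $E' \subset E$ then satisfies $\H^{2n-2+\nu}(E') \ge \H^{2n-2+\nu}(E)/2$ and $|f| \ge \epsilon M$ on $E'$. Therefore $\|f\|_{L^p(E, \H^{2n-2+\nu})} \ge \epsilon M \cdot (\H^{2n-2+\nu}(E)/2)^{1/p}$, which, combined with the trivial estimate $\|f\|_{L^p(Y(z,r))} \le M \cdot |Y(z,r)|^{1/p}$, yields \eqref{eq:remez-lower} with constant polynomial in $1/\gamma$ after taking the ratio and plugging in the density hypothesis to absorb the $|Y(z,r)|$ factors.

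The main obstacle is the sublevel-set estimate itself. In several variables, the set $\{|f| < \epsilon M\}$ is not automatically a tube around the zero set, since $|f|$ could a priori be small in regions far from its zeros. One must invoke a Harnack-type argument---or, equivalently, iterate Proposition \ref{prop:local} on successively shrinking polydiscs in the geometry of Lemma \ref{lemma:kob-geo}---to rule out this possibility and confirm that low-value regions of $|f|$ are concentrated near the zero set at the correct anisotropic scale. Making this rigorous in the anisotropic Kobayashi setting, and verifying that the resulting tube-measure bound reproduces the precise exponent $(n-1)/(n(n+1))$ appearing in the density hypothesis, is the bulk of the technical work.
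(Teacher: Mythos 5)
Your conceptual framework matches the paper's: bound the Hausdorff measure of the sublevel set $\{|f|<\epsilon M\}\cap Y(z,r)$ using the $\H^{2n-2}$ bound on the zero set (Lemma \ref{lemma:zero}), then invoke the density hypothesis to force some point of $E$ outside the sublevel set. The exponent $\frac{n-1}{n(n+1)}$ does indeed arise exactly as you indicate, from combining $|Y(z,r)|\sim\delta(z)^{n+1}$ (Lemma \ref{lemma:kob-geo}) with $\H^{2n-2}(\{f=0\}\cap Y(z,s))\lesssim\delta(z)^{n-1}N$. Your endgame---Chebyshev on $E$ to pass from $p=\infty$ to general $p$---is a legitimate variant of what the paper does.

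The gap is in the sublevel-set estimate itself, which you identify as ``the bulk of the technical work'' but do not carry out, and the tool you propose is not the right one. Federer's tube formula requires regularity of the zero variety (holomorphic zero sets can be singular), and even granting smoothness, the claim that $\{|f|<\epsilon M\}$ lies in a tube of thickness $\sim\epsilon^{1/N^*}$ around $\{f=0\}$ is not something that Cauchy estimates and a vanishing-order bound give you directly: a point where $|f|$ is small need not a priori be within $\epsilon^{1/N^*}$ of a zero, and the bound $N_p(z,f,r,R)\le N^*$ is a doubling index at the macroscopic scale $(r,R)$, not a local vanishing-order bound at arbitrary points and microscales. Moreover, you would still need to convert a tube bound, naturally phrased in Lebesgue measure, into an $\H^{2n-2+\nu}$ bound. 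The paper avoids all of this by covering $Y(z,r)$ with $K$ disjoint cubes of side $\ell\sim(|Y_r|/K)^{1/2n}$ and choosing $K=C^{-1}e^{a/(2C(N+1))}$ so that, by Harnack's inequality combined with the local Remez inequality of Lemma \ref{lemma:local} applied to each cube (giving $\sup_Q|f|\ge(CK)^{-C(N+1)}$), every cube meeting $F_a=\{|f|\le e^{-a}\}$ must contain a zero. Counting such cubes via the zero-set bound then yields $\H^{2n-2+\nu}(F_a)\lesssim N|Y_r|^{q^*}K^{-\nu/2n}$, which is the substitute for your tube estimate and requires no regularity of the zero variety. If you replace ``Federer's tube formula'' with this cube-counting argument---which is what your remark about iterating Proposition \ref{prop:local} gestures toward---your outline would essentially reproduce the paper's proof.
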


One might be bothered by the extra factor $|Y_r|^{\frac{n-1}{n(n+1)}}$, however it occurs because the Hausdorff measure does not scale according to the determinant of an affine map, but rather according to the map's most extreme directions. For example, if $D$ is a diagonal matrix with all entries $\lambda_i$, then $\min\{\lambda_i\} \H^1(E) \le \H^1(DE) \le \max\{\lambda_i\} \H(E)$ and one can find sets $E_{max}$ and $E_{min}$ which attain the upper and lower bounds, respectively.

\begin{proof}
We will only prove the case $p=\infty$. One may go back to any $1 \le p \le \infty$ by repeating the steps at the end of the proof of Lemma \ref{lemma:local}. Normalize $f$ so that $\sup_{Y(z,r)}|f|=1$. In this way, it is enough to obtain a lower bound on $\sup_{E}|f|$. 
Using the local estimate (Lemma \ref{lemma:local}), we will connect the Hausdorff measure of small sublevel sets 
	\[ F_a = Y(z,r) \cap \{ |f| \le e^{-a} \} \] 
to that of the zero set. Cover $Y(z,r)$ with $K$ disjoint cubes of side length $\ell \sim (|Y_r|/K)^{1/2n}$  contained in $Y(z,R)$ . If $K$ is chosen appropriately, and $a$ is large, then any cube that intersects $F_a$ contains a zero of $f$. Indeed, if $f$ does not have a zero in $Q$, but $Q$ intersects $F_a$, then by Harnack's inequality,
	\[ \sup_{Q} |f| \le c \inf_Q |f| \le ce^{-a}. \]
On the other hand, applying the local estimate Lemma \ref{lemma:local},
	\[ (CK)^{-C(N+1)} \le  \sup_{Q}|f|, \quad N = N_\infty(r,R)\]
we obtain a contradiction for $K = C^{-1}e^{ a/(2C(N+1))}$ and then $a$ large. 

Now, $\H^{2n-2+\nu}(F_a) \le \sum_{Q \cap F_a \ne \varnothing} \ell^{2n-2+\nu}$ so it remains to estimate the number of cubes intersecting $F_a$, which we have just shown to be bounded above by the number of cubes with zeroes.  Replace each cube $Q$ with its double $2Q$. If $f$ has a zero on $Q$ then we can find $Q^* \subset 2Q$ with $f(c(Q^*))=0$ and the side length of $Q^*$ is still $\ell$. In this way, we can apply the second statement in Lemma \ref{lemma:zero} to get $\mathcal H^{2n-2}(\{f=0\} \cap 2Q) \ge \mathcal H^{2n-2}(\{f=0\} \cap Q^*) \gtrsim \ell^{2n-2}$. Since $K$ is large, for $s = \frac{R+r}{2}$, $\cup_{Q \cap Y(z,r) \ne \varnothing} 2Q \subset Y(z,s)$, so 
	\[ \H^{2n-2}(\{f=0\} \cap  Y(z,s) ) \ge \frac{1}{2^{2n}} \sum_{Q \cap F_a \ne \varnothing} \H^{2n-2}(\{f=0\} \cap 2Q) \gtrsim  \sum_{Q \cap F_a \ne \varnothing} \ell^{2n-2}. \]

From Lemma \ref{lemma:zero}, we also have $\H^{2n-2}(\{f=0\} \cap Y(z,s)) \le C \delta(z)^{n-1} N(s,R)$ but $N(s,R) \le N(r,R)=:N$ since $r < s$. Altogether, this implies
	\[ \begin{aligned} \H^{2n-2+\nu}(F_a) &\le C\ell^{\nu} N \delta(z)^{n-1} \\
				&\le CN \delta(z)^{n-1} |Y(z,r)|^{\nu/2n} K^{-\nu/2n} \\
				&\le CN|Y(z,r)|^{q^*} K^{-\nu/2n}, \end{aligned}\]
where, recalling the fact that $|Y_r| \sim \delta(z)^{n+1}$ from Lemma \ref{lemma:kob-geo}, 
	\[ q^*= 1-\frac{2}{n+1}+\frac{\nu}{2n} 
	= \frac{2n-2+\nu}{2n} - \frac{n-1}{n(n+1)}. \]
Moreover, plugging in the value of $K$ from above, we have shown that for $a$ large enough,
	\begin{equation}\label{eq:level} \H^{2n-2+\nu}(\{|f| \le e^{-a}\} \cap Y(z,r)) \le C N |Y(z,r)|^{q^*}e^{-\nu a/ 4 C n(N+1) }.\end{equation}
Now we can prove the lemma. Let $E \subset Y(z,r)$ with $\H^{2n-2+\nu}(E) \ge \gamma |Y(z,r)|^{q^*}$. Pick $a$ so that the $CNe^{-\nu a/4 C n(N+1)}=\gamma/2< \gamma $ ($\gamma$ must be small in order that $a$ be large enough to apply (\ref{eq:level}). This determines $c$). Since $a$ is large, (\ref{eq:level}) forces $E\backslash F_a$ to be non-empty and thus
	 \[ \sup_{E}|f| \ge e^{-a} = \left( \frac{\gamma}{2CN} \right)^{4Cn(N+1)\nu^{-1}}\] 
which proves (\ref{eq:remez-lower}) when $p=\infty$.
\end{proof}

\begin{cor}\label{cor:2n-2}
Given $N^*>0$, there exists $C,\tilde \gamma,\ep >0$ such that (\ref{eq:remez-lower}) holds for $\nu =0$ in the form
	\[ \|f\|_{L^p(Y(z,r))} \le C \|f\|_{L^p(E,\H^{2n-2})}\]
for all $z \in \Omega_\ep$, $f \in \Hol(Y(z,R))$ with $N_p(z,f,r,R) \le N^*$, and $E \subset Y(z,r)$ satisfying
	\[ \dfrac{|Y(z,r)|^{\frac{n-1}{n(n+1)}}\H^{2n-2}(E)}{|Y(z,r)|^{(2n-2)/2n}} \ge \tilde \gamma. \]

\end{cor}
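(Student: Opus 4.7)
The plan is to specialize the proof of Lemma \ref{lemma:local-lower} to $\nu=0$. The only place that lemma exploited $\nu>0$ was in the final choice of $a$: the factor $e^{-\nu a/(4Cn(N+1))}$ in estimate (\ref{eq:level}) makes the $\H^{2n-2+\nu}$-measure of the sublevel set $F_a := \{|f|\le e^{-a}\}\cap Y(z,r)$ arbitrarily small by taking $a$ large, which is what permitted any $\gamma>0$. For $\nu=0$ this factor collapses to $1$ and one is left with the uniform bound
$$\H^{2n-2}(F_a)\le C_*\,|Y(z,r)|^{q^*},\qquad q^* = \tfrac{n-1}{n+1},$$
valid for $a$ fixed but sufficiently large. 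Provided the density threshold $\tilde\gamma$ exceeds $C_*$, this is still enough to conclude, and that is exactly the content of the corollary.

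Concretely, I would reduce to the case $p=\infty$ and normalize $\sup_{Y(z,r)}|f|=1$; the passage back to general $p$ uses the good-set trick at the end of the proof of Lemma \ref{lemma:local}. The hypothesis $N_p(z,f,r,R)\le N^*$ controls $N_\infty(z,f,r,s)$ at an intermediate radius $s=(R+r)/2$ by a constant $N^{**}$ depending only on $N^*$, $r$, $R$, $p$, $\Omega$, via Lemma \ref{lemma:kob-mean} applied to two nested Kobayashi balls. I would then run the zero-counting covering argument of Lemma \ref{lemma:local-lower} at radius $s$, choosing $a$ large enough in terms of $N^{**}$, $r$, $R$ so that every grid cube meeting $F_a$ contains a zero of $f$; combining with Lemma \ref{lemma:zero} yields the displayed bound with $C_*$ depending only on $r$, $R$, $N^*$, $\Omega$.

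Setting $\tilde\gamma=2C_*$, the density hypothesis forces $\H^{2n-2}(E\setminus F_a)\ge C_*|Y(z,r)|^{q^*}>0$, so $|f|>e^{-a}$ on a subset of $E$ of positive $\H^{2n-2}$-measure. For $p=\infty$ this immediately gives $\sup_E|f|\ge e^{-a}$. For finite $p$ the direct comparison
$$\|f\|_{L^p(E,\H^{2n-2})}^p \ge C_*e^{-pa}|Y(z,r)|^{q^*}\qquad\text{versus}\qquad \|f\|_{L^p(Y(z,r))}^p\le|Y(z,r)|$$
under the normalization leaves a $z$-dependence only through $|Y(z,r)|^{(1-q^*)/p}$, bounded uniformly by $(\mathrm{diam}\,\Omega)^{2n(1-q^*)/p}$. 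The main point needing care, more bookkeeping than genuine obstacle, is verifying that $a$ and all geometric constants can be chosen uniformly in $z\in\Omega_\epsilon$ and in $f$ with $N_p\le N^*$; this uniformity was already set up in the proofs of Lemmas \ref{lemma:local} and \ref{lemma:local-lower}.
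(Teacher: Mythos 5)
Your proposal is correct and follows the paper's own argument almost verbatim: observe that the bound $\H^{2n-2}(F_a)\le C N\,|Y(z,r)|^{q^*}$ from (\ref{eq:level}) survives at $\nu=0$ (the decaying factor $K^{-\nu/2n}$ just becomes $1$), fix $a$ large enough depending on $N^*$, and then take $\tilde\gamma$ above twice the resulting constant so that $E\setminus F_a$ is nonempty; the paper's choice $\tilde\gamma\ge 2CN^*$ is exactly your $2C_*$ with the $N^*$-dependence written out. The only cosmetic difference is that you spell out a direct $L^p$ comparison using $|Y(z,r)|^{1-q^*}\lesssim 1$ as an alternative to the good-set trick that the paper implicitly reuses from Lemma \ref{lemma:local}; both are valid and yield the same uniform constant.
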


\begin{proof}
Looking at the previous proof, we did not use the fact $\nu>0$ until after (\ref{eq:level}). Picking up there, with $\nu =0$, if
	\[ \tilde \gamma \ge 2 CN^*, \]
then one still obtains that $E \backslash F_a$ is non-empty and so $\sup_E |f| \ge e^{-a}$.
\end{proof}

The same strategy proves Theorem \ref{thm:lower-dim-euclid} by replacing the Kobayashi balls with Euclidean balls or cubes. One ought to perform a Whitney decomposition of $\Omega$. The analogues of Lemmas \ref{lemma:kob-geo} and \ref{lemma:kob-mean} are automatic for Euclidean cubes or balls. Also, the analogous local estimate, up to the form of the constant, is proved by Logunov and Malinnikova in a much more general setting \cite{logunov2018quantitative}.

\section{Reverse Carleson Measures}\label{sec:reverse}
Recall that a positive, finite Borel measure $\mu$ is said to be Carleson on $A^p(\Omega)$ if there exists a constant $C$ so that for all $f \in A^p(\Omega)$, 
$$\int_{\Omega}|f|^p \, d\mu \leq C \int_{\Omega} |f|^p \, dA.$$ 
Carleson measures on smoothly bounded, strongly pseuedoconvex domains were characterized by Abate and Saracco in \cite{abatesaracco2011}. It turns out the following condition is both necessary and sufficient for $\mu$ to be Carleson on $A^p(\Omega)$ for all $p>0$: there exists an $r \in (0,1)$ so that

$$\sup_{z \in \Omega} \frac{\mu(Y(z,r))}{|Y(z,r)|}<\infty.$$

Moreover, if the above condition holds for some $r$, it holds for all $r \in (0,1).$ Therefore, a positive finite Borel measure $\mu$ is Carleson on $A^p(\Omega)$ if and only if the following ``norm'' is finite:

$$\|\mu\|_{\mathcal{C}}:= \sup_{z \in \Omega} \frac{\mu(Y(z,1/2))}{|Y(z,1/2)|}.$$
The choice of $1/2$ is not important by the remark above.  

Conversely, recall reverse Carleson measures were defined in \ref{subsec:reverse}. In analogy with Luecking in \cite{luecking1985forward}, we are able to provide a sufficient condition for a measure $\mu$ to be reverse Carleson on $A^p_\alpha(\Omega)$ in terms of the relative density condition for $1 \leq p < \infty.$ The sufficient condition includes the assumption that $\|\mu_\alpha\|_{\mathcal{C}}<\infty,$ where $d\mu_\alpha= |\rho|^{-\alpha} d\mu$ (so $\mu_\alpha$ is absolutely continuous with respect to $\mu$).  Given a measure $\mu$ and $s \in (0,1)$, we introduce the notation 	
	\[ \bar \mu_s(z):= \frac{\mu(Y(z,s))}{|Y(z,s)|}.\]

\begin{theorem} \label{reversecarleson}
Let $r_0 >0,$ $\alpha>-1$, and $1 \le p < \infty.$ There exists constants $K,q>1$, depending only on these parameters, so that for any $0<\gamma,\varepsilon<1$ and $s \leq K^{-1} \varepsilon^{1/p} \gamma^q$ the following property holds: There exists $C>0$ such that for all positive finite Borel measures $\mu$ satisfying 
	\begin{itemize}
	\item $\|\mu_\alpha\|_{\mathcal{C}}<\infty$,
	\item The set $G=\{z \in \Omega:  (\overline {\mu_\alpha})_{s} (z)>\varepsilon \|\mu_\alpha\|_{\mathcal{C}}\}$ is relatively dense with respect to the parameters $r_0$ and $\gamma$, 
	\end{itemize}
$$ \int_{\Omega} |f|^p |\rho|^\alpha \, dA \leq C  \int_{\Omega} |f|^p \, d\mu \quad \forall f \in A^p_\alpha(\Omega).$$  

\end{theorem}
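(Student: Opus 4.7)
The plan is to combine the dominating-set characterization of Proposition \ref{prop:suff} with a local transfer from Lebesgue integration to integration against $\mu_\alpha$ on individual Kobayashi balls. Since $G$ is assumed relatively dense with parameters $r_0, \gamma$, an application of Proposition \ref{prop:suff} immediately yields
\[
\int_\Omega |f|^p |\rho|^\alpha \, dA \leq C_0 \gamma^{-q_0} \int_G |f|^p |\rho|^\alpha \, dA
\]
with $C_0, q_0$ depending only on $\Omega, r_0, p, \alpha$, so the remaining task is to bound the right-hand side by $\int_\Omega |f|^p \, d\mu$. To that end I would cover $G$ by Kobayashi balls $\{B_j := Y(z_j,s)\}$ with $z_j \in G$ and uniformly bounded overlap, using Lemma 1.5 of \cite{abatesaracco2011}. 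Since $|\rho|$ is comparable on each $B_j$ by Lemma \ref{lemma:kobbounddist} and $d\mu = |\rho|^\alpha d\mu_\alpha$, the task reduces to a purely local transfer $\int_{B_j} |f|^p \, dA \lesssim \int_{B_j} |f|^p \, d\mu_\alpha$ on each ball, with constants permitted to depend on $\varepsilon, s, M$ and the other parameters. A good/bad dichotomy exactly as in the proof of Proposition \ref{prop:suff}, based on the $A^p_\alpha$-boundedness of point evaluation, will restrict attention to those balls on which the doubling index $N_p(f,z_j,s,R)$ is uniformly bounded, so that the Remez exponent in Lemma \ref{lemma:local} is under control.

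The crux of the proof lies in the local transfer on a fixed good ball $B = B_j$. Set $\lambda := 2 \int_B |f|^p \, d\mu_\alpha / \mu_\alpha(B)$ and $E := \{w \in B : |f(w)|^p \leq \lambda\}$; then Markov's inequality combined with the density hypothesis $\mu_\alpha(B) \geq \varepsilon M |B|$ gives
\[
\mu_\alpha(E) \geq \mu_\alpha(B)/2 \geq \tfrac{\varepsilon M}{2}\, |B|.
\]
The principal technical obstacle is the conversion of this $\mu_\alpha$-mass estimate into a Lebesgue lower bound $|E| \geq c_* \varepsilon |B|$, since a Carleson measure $\mu_\alpha$ can be singular with respect to Lebesgue measure. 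I would do this by approximating $E$ from above by an open set via outer regularity, covering that open set by a finite-overlap Besicovitch family of small Kobayashi sub-balls, and applying the scale-$\tfrac12$ Carleson bound $\mu_\alpha(Y(\cdot,\tfrac12)) \leq M|Y(\cdot,\tfrac12)|$ on each sub-ball (transferred to the operational scale by the inclusion $Y(\cdot,s) \subset Y(\cdot,\tfrac12)$ at the cost of an explicit scale-dependent factor from Lemma \ref{lemma:kob-geo}).

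Once $|E|/|B|$ is bounded below, the local Remez inequality of Lemma \ref{lemma:local} yields
\[
\int_B |f|^p \, dA \leq \Bigl(\frac{C}{c_*\varepsilon}\Bigr)^{q_1 p} \int_E |f|^p \, dA \leq \Bigl(\frac{C}{c_*\varepsilon}\Bigr)^{q_1 p} \lambda |E| \leq C'' \int_B |f|^p \, d\mu_\alpha,
\]
using $|f|^p \leq \lambda$ on $E$ and the definition of $\lambda$, with $C''$ absorbing the dependence on $\varepsilon, M$ and the other parameters. Summing this local estimate over the good $B_j$ via the finite overlap of the cover, reinstating the weight $|\rho|^\alpha$, and combining with the dominating inequality produces the claimed reverse Carleson bound. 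The polynomial threshold $s \leq K^{-1}\varepsilon^{1/p}\gamma^q$ arises from balancing the Remez exponent (which scales like $(c_*\varepsilon)^{-q_1 p}$), the scale-dependent constant in the Lebesgue-conversion step, and the sampling loss $\gamma^{-q_0}$ from Proposition \ref{prop:suff}; the Lebesgue-conversion step is the main technical obstacle throughout and is precisely the reason that $s$ must be taken sufficiently small relative to $\varepsilon$ and $\gamma$.
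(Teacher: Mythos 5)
The proposal has a genuine gap at the step you yourself identify as the ``main technical obstacle'': converting the $\mu_\alpha$-mass bound $\mu_\alpha(E) \geq \tfrac{\varepsilon M}{2}|B|$ into a Lebesgue lower bound $|E| \geq c_*\varepsilon|B|$. The Carleson condition $\|\mu_\alpha\|_{\mathcal C}<\infty$ is stated at the fixed scale $r=1/2$ and does \emph{not} control the averages $\mu_\alpha(Y(w,t))/|Y(w,t)|$ uniformly as $t \to 0$: by Proposition \ref{comparablesize} one has $|Y(w,t)| \sim t^{2n}\delta(w)^{n+1}$ whereas $\mu_\alpha(Y(w,t)) \leq \mu_\alpha(Y(w,1/2)) \lesssim M\delta(w)^{n+1}$, so the transfer factor degrades like $t^{-2n}$. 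Covering $E$ (or an open set slightly larger than $E$) by a Besicovitch family of small balls therefore gives only $\mu_\alpha(E) \lesssim M t^{-2n}|E'|$, and shrinking $t$ to make $|E'|$ approximate $|E|$ blows up the constant. In the extreme case $\mu_\alpha$ can be singular with respect to Lebesgue measure and $E$ can have Lebesgue measure zero while carrying a large $\mu_\alpha$-fraction of $B$; the Carleson condition alone does not exclude this, so the conversion step simply cannot work. (The threshold $s \leq K^{-1}\varepsilon^{1/p}\gamma^q$ does not rescue this: making $s$ smaller makes your $s^{-2n}$ loss worse, not better.)

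The paper avoids this trap entirely by never comparing $\mu_\alpha$ with Lebesgue measure. Instead it proves a Lipschitz-type estimate for holomorphic functions on Kobayashi balls (Proposition \ref{differencequotient}), namely $|f(z)-f(w)| \lesssim d(z,w)\langle f\rangle_{Y(z,R),p}$ for $w \in Y(z,s)$, which gives Lemma \ref{reversecarlesonlemma}:
\[
\int_\Omega\int_\Omega \frac{\chi_s(w,z)}{|Y(z,s)|}\,|f(w)-f(z)|^p\,|\rho(w)|^\alpha\,dA(w)\,d\mu(z) \;\lesssim\; s^p\,\|\mu\|_{\mathcal C}\int_\Omega |f|^p|\rho|^\alpha\,dA.
\]
This compares the \emph{integrand} $|f(w)|^p\,dA$ with $|f(z)|^p\,d\mu_\alpha(z)$ directly, with an error that is \emph{small} (of order $s^p$), so one never needs $\mu_\alpha \ll dA$. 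The proof then raises the lemma to the $1/p$ power, applies the reverse triangle inequality, bounds the cross term on the $\mu_\alpha$ side by $\int|f|^p\,d\mu$, bounds the cross term on the Lebesgue side from below by $\varepsilon\|\mu_\alpha\|_{\mathcal C}\int_G|f|^p|\rho|^\alpha\,dA$ using the definition of $G$, invokes the dominating-set theorem on $G$, and finally absorbs the $s^p$ error by choosing $s$ small — here small $s$ genuinely helps, which is the opposite of what happens in your approach. If you want to repair your proposal, the Lipschitz estimate on Kobayashi balls is the ingredient you are missing; the Markov/sublevel-set idea, Besicovitch covering, and the good/bad decomposition can then be dropped.
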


To prove this theorem, we will first refine some of the properties concerning the Kobayashi metric from Section \ref{sec:pseudo}. The first we recall is from \cite[Lemma 6]{li1992}: If $\tanh^{-1}(r)\leq 1,$ then there exists constants $a(r), b(r), A, B$,  where $A, B \geq 1$  and only depend on $\Omega$, so 
\begin{equation} P(0, a(r) \delta(z), b(r) \delta(z)^{1/2}) \subset U(z)(Y(z,r)-z) \subset P(0,A \tanh^{-1}(r) \delta(z), B\tanh^{-1}(r) \delta(z)^{1/2}).\label{polydisccontainment} \end{equation} 

We now track the dependence on $r$ in Lemma \ref{lemma:kob-geo}:

\begin{proposition} \label{comparablesize}
There exist constants $c_1$, $C_1$ so that for $r$ with $\tanh^{-1}(r)<1$, $z \in \Omega$, and $w \in Y(z,r),$

$$ c_1 |\rho(w)| \leq |\rho(z)| \leq C_1 |\rho(w)|$$ 
and
$$c_1 |Y(z,r)| \leq |Y(w,r)| \leq C_1 |Y(z,r)|.$$
Moreover, there exist constants $c_2$, $C_2$ so that if $\tanh^{-1}(r)<1$ and $z \in \Omega,$

$$c_2 r^{2n} \delta(z)^{n+1} \leq |Y(z,r)| \leq C_2 r^{2n} \delta(z)^{n+1}.$$
\begin{proof}
The first display is \cite[Lemma 1.2]{abatesaracco2011}.  For the other displays, the upper polydisc containment gives the upper bound and \cite[Lemma 1.1]{abatesaracco2011} gives the lower bound. 

\end{proof}
\end{proposition}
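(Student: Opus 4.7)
The plan is to treat the three displays in Proposition \ref{comparablesize} separately and build each of the latter two from the first, together with the polydisc containment \eqref{polydisccontainment}.

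First I would dispose of the display $c_1|\rho(w)| \le |\rho(z)| \le C_1|\rho(w)|$ by invoking Lemma 1.2 in \cite{abatesaracco2011} directly; this is a standard fact about the Kobayashi metric on smoothly bounded strongly pseudoconvex domains, and since $|\rho|\asymp \delta$ near the boundary, it is essentially equivalent to Lemma \ref{lemma:kobbounddist} with explicit constants. Note that this step also tells us that $\delta(z)\asymp \delta(w)$ whenever $w\in Y(z,r)$, with constants independent of $r$ (for $\tanh^{-1}(r)<1$).

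Next I would deduce the volume comparison $c_1|Y(z,r)| \le |Y(w,r)| \le C_1|Y(z,r)|$ from the polydisc sandwich \eqref{polydisccontainment}. Both $|Y(z,r)|$ and $|Y(w,r)|$ are trapped between the volumes of the inner and outer polydiscs based at $z$ and $w$ respectively. Those polydisc volumes are comparable to $a(r)^2b(r)^{2n-2}\delta(\cdot)^{n+1}$ and $A^2B^{2n-2}\tanh^{-1}(r)^{2n}\delta(\cdot)^{n+1}$. Since the first display guarantees $\delta(z)^{n+1}\asymp \delta(w)^{n+1}$ with constants independent of $r$, the two sandwiches match up and yield the desired two-sided bound. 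The only subtlety is that the ratio between the inner and outer polydisc volumes depends on $r$, so one must be careful to use the inner polydisc volume at $z$ as a lower bound for $|Y(z,r)|$ and compare it against the outer polydisc volume at $w$ as an upper bound for $|Y(w,r)|$, and vice versa.

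Finally I would establish the third display $c_2 r^{2n}\delta(z)^{n+1} \le |Y(z,r)| \le C_2 r^{2n}\delta(z)^{n+1}$. The upper bound follows immediately from the right-hand polydisc containment in \eqref{polydisccontainment}, which gives $|Y(z,r)| \le A^{2}B^{2n-2}\tanh^{-1}(r)^{2n}\delta(z)^{n+1}$. Since we are restricted to $\tanh^{-1}(r)<1$, i.e.\ $r<\tanh(1)$, the quantities $r$ and $\tanh^{-1}(r)$ are comparable with absolute constants, so $\tanh^{-1}(r)^{2n}\asymp r^{2n}$ and we obtain the claimed upper bound. The lower bound is the one genuinely new point; here I would appeal to Lemma~1.1 of \cite{abatesaracco2011}, which provides the sharp asymptotic lower bound on the volume of a Kobayashi ball in terms of $r$ and $\delta(z)$ (this is where the precise dependence $r^{2n}$ rather than the weaker $a(r)^2 b(r)^{2n-2}$ coming from the inner polydisc comes from).

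The main obstacle is really only bookkeeping: keeping track of which constants are allowed to depend on $\Omega$ versus $r$, and verifying that the restriction $\tanh^{-1}(r)<1$ is what decouples $r$ from $\tanh^{-1}(r)$ and makes the clean power $r^{2n}$ appear. There are no substantive analytic difficulties beyond those already handled in \cite{abatesaracco2011,li1992}.
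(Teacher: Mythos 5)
Your treatment of the first and third displays matches the paper: the first is Lemma~1.2 of \cite{abatesaracco2011}, and for the third the upper bound comes from the outer polydisc in \eqref{polydisccontainment} together with $\tanh^{-1}(r)\asymp r$ on the range $\tanh^{-1}(r)<1$, while the $r^{2n}$ lower bound is genuinely Lemma~1.1 of \cite{abatesaracco2011} and does not come out of the inner polydisc.

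Your argument for the second display, however, has a gap. You want $c_1|Y(z,r)|\le|Y(w,r)|\le C_1|Y(z,r)|$ with constants independent of $r$, and you propose to get it directly from the polydisc sandwich plus $\delta(z)\asymp\delta(w)$. But the sandwich only gives
\[
a(r)^2 b(r)^{2n-2}\delta(\cdot)^{n+1}\ \lesssim\ |Y(\cdot,r)|\ \lesssim\ A^2 B^{2n-2}\bigl(\tanh^{-1}r\bigr)^{2n}\delta(\cdot)^{n+1},
\]
so comparing the lower bound at $z$ with the upper bound at $w$ (or \emph{vice versa}, exactly the ``careful'' pairing you describe) yields a ratio
\[
\frac{|Y(w,r)|}{|Y(z,r)|}\ \lesssim\ \frac{A^2 B^{2n-2}\bigl(\tanh^{-1}r\bigr)^{2n}}{a(r)^2 b(r)^{2n-2}},
\]
which depends on $r$ unless you know $a(r),b(r)\gtrsim r$ --- a fact that \eqref{polydisccontainment} as stated does not supply, and which is precisely what Lemma~1.1 of \cite{abatesaracco2011} provides. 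The pairing you singled out as the ``subtlety'' is in fact the step that introduces the $r$-dependence; it does not avoid it. The repair is simply to reverse the order: prove the third display first (which you already do correctly, importing the $r^{2n}$ lower bound from Lemma~1.1), and then the second display follows immediately, since
\[
|Y(w,r)|\ \le\ C_2\, r^{2n}\delta(w)^{n+1}\ \le\ C_2\, C_1^{\,n+1}\, r^{2n}\delta(z)^{n+1}\ \le\ \frac{C_2\, C_1^{\,n+1}}{c_2}\,|Y(z,r)|,
\]
and symmetrically for the lower bound. This is also how the paper's own (terse) proof is meant to be read: the ``other displays'' are both obtained from the outer-polydisc upper bound and the Lemma~1.1 lower bound, and the volume comparability at $z$ and $w$ is then a corollary, not an independent consequence of the sandwich.
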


\begin{proposition}\label{differencequotient}
Let $0<R < 1$,  $A,B$ be as in \eqref{polydisccontainment} , $\tanh^{-1}(s) \leq \frac{1}{3AB} \min\{\frac{1}{2},a(\frac{R}{2}),b(\frac{R}{2})\},$ and $1 \le p \le \infty$. Then there exists $C_R>0,$ independent of $s$, such that for all holomorphic $f$ and $z \in \Omega$,

	$$ \sup_{w \in Y(z,s)} \frac{|f(z)-f(w)|}{d(z,w)} \le C_R \langle f \rangle_{Y(z,R),p}. $$
	
\begin{proof}
We will apply Cauchy's integral formula for polydiscs to the function $f$. Let $d_k(z)=\delta(z)$ if $k=1$ and $\delta(z)^{1/2}$ otherwise.  By composing $f$ with an appropriate unitary transformation and translation, which preserves the Kobayashi metric, we can assume without loss of generality that the standard coordinates $z_1,\dots,z_n$ coincide with the suitably rotated coordinates centered at $z$ (i.e. we assume $z=0$).  Write the polydisc $P(0,\delta(0) r_1,\delta(0)^{1/2}r_2)= D(0,d_1(0) r_1) \times D(0,d_2(0) r_2) \times \dots \times D(0, d_n(0)r_2).$ Let $r=\frac{1}{3}\min\{\frac{1}{2},a(\frac{R}{2}),b(\frac{R}{2})\}$.  Note that we have, by our choice of $s$,
$$Y(0,r) \subset P(0,  r \delta(0),  r \delta(0)^{1/2}) \subset  P(0, 2 r \delta(0), 2 r \delta(0)^{1/2}) \subset \Omega.$$

 Applying the formula to the polydisc $P(0, 2r \delta(0), 2r \delta(0)^{1/2})$, we have 

\begin{align*}
 f(0)-f(w) = \frac{1}{(2\pi i)^n}\int_{\substack{\partial D(0,2d_1(0) r) \times \dots \\ \times \partial D(0,2 d_n(0) r)}  }  f(\zeta)\left( \frac{1}{\prod_{k=1}^n\zeta_k}-\frac{1}{\prod_{k=1}^n(\zeta_k-w_k)}\right) d\zeta.  
\end{align*}

It is clear we must estimate

$$ \left| \frac{1}{\prod_{k=1}^n \zeta_k}-\frac{1}{\prod_{k=1}^n(\zeta_k-w_k)}\right|.$$

Note that by our choice of polydisc, we have the inequalities 

 $$r d_k(0) \leq |\zeta_k-w_k| \leq 2 r d_k(0)$$
for $\zeta_k \in  \partial D(0,2 d_k(0)r)$ and $1 \leq k \leq n$. And by definition, $|\zeta_k|= 2 r d_k(0)$ for $1 \leq k \leq n.$  

Therefore, we estimate

\begin{align*}
& \left| \frac{1}{\prod_{k=1}^n \zeta_k}-\frac{1}{\prod_{k=1}^n(\zeta_k-w_k)}\right|\\
&= \left| \frac{\prod_{k=1}^n(\zeta_k-w_k)-\prod_{k=1}^n(\zeta_k) }{\prod_{k=1}^n\zeta_k(\zeta_k-w_k)}\right|\\
& = \left| \frac{\prod_{k=1}^n(\zeta_k-w_k)-\zeta_1\prod_{k=2}^n(\zeta_k-w_k)+\zeta_1\prod_{k=2}^n(\zeta_k-w_k)-\prod_{k=1}^n\zeta_k }{\prod_{k=1}^n\zeta_k(\zeta_k-w_k)}\right|\\
& \leq  C_R \left(\frac{|w_1|}{d_1(0)\delta(0)^{(n+1)/2}}\right)+ \left|\frac{\zeta_1\prod_{k=2}^n(\zeta_k-w_k)-\prod_{k=1}^n\zeta_k }{\prod_{k=1}^n\zeta_k(\zeta_k-w_k)}\right|.
\end{align*}

Iterating this process in an obvious way, we obtain 

$$\left| \frac{1}{\prod_{k=1}^n \zeta_k}-\frac{1}{\prod_{k=1}^n(\zeta_k-w_k)}\right| \leq \frac{C_R}{\delta(0)^{(n+1)/2}}\sum_{k=1}^{n} \frac{|w_k|}{ d_k(0)}.$$

We now claim 
$$\sum_{k=1}^{n} \frac{|w_k|}{ d_k(0)} \leq C d(0,w). $$

To see this, let $r_0=d(0,w)<\tanh^{-1}(s).$ Then $w \in Y(0, \tanh(2 r_0)),$ which by the polydisc containment property implies $w \in P(0,2A r_0 \delta(0),  2B r_0 \delta(0)^{1/2})$. This then easily implies
$$\sum_{k=1}^{n} \frac{|w_k|}{ d_k(0)} \leq C r_0= C d(0,w).$$

Straightforward estimation then implies

\[ \begin{aligned} |f(0)-f(w)| &\leq C_R d(0,w) \sup_{\zeta \in P(0, 3r \delta(0), 3r \delta(0)^{1/2})} |f(\zeta)| \\
	&\leq  C d(0,w) \sup_{\zeta \in P(0, a(\frac{R}{2}) \delta(0) ,b(\frac{R}{2})\delta(0)^{1/2})} |f(\zeta)|. \end{aligned} \]

Note that $ P(0, a(\frac{R}{2})\delta(0) ,b(\frac{R}{2}) \delta(0)^{1/2}) \subset Y(0,\frac{R}{2})$, so the result follows by applying the mean value property on $Y(0,\frac{R}{2})$ (see Lemma \ref{lemma:kob-mean}). 

\end{proof}
\end{proposition}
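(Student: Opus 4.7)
The plan is to reduce to a Cauchy integral formula computation on an appropriate polydisc, using the anisotropic polydisc containment \eqref{polydisccontainment} to translate between Kobayashi distance and Euclidean coordinates. Since the Kobayashi metric is invariant under biholomorphisms of $\Omega$, and in particular under the unitary change of coordinates $U(z)$, I may assume without loss of generality that $z = 0$ and that the standard coordinates are aligned so that $z_1$ is the complex normal direction at $\pi(0)$; write $d_1(0) = \delta(0)$ and $d_k(0) = \delta(0)^{1/2}$ for $k \geq 2$.

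Pick $r = \tfrac{1}{3}\min\{\tfrac{1}{2}, a(R/2), b(R/2)\}$, chosen small enough (and compatible with the hypothesis on $s$) so that the polydisc $P(0, 2r\delta(0), 2r\delta(0)^{1/2})$ is comfortably contained in $Y(0, R/2)$ (via \eqref{polydisccontainment}), and so that $Y(0, s)$ lies well inside $P(0, r\delta(0), r\delta(0)^{1/2})$. Apply Cauchy's integral formula for polydiscs on the torus $\partial D(0, 2d_1(0)r)\times\cdots\times \partial D(0, 2d_n(0)r)$, writing
\[ f(0) - f(w) = \frac{1}{(2\pi i)^n}\int f(\zeta)\left[\prod_{k=1}^n \zeta_k^{-1} - \prod_{k=1}^n(\zeta_k - w_k)^{-1}\right]d\zeta. \]
The main technical step is to estimate the bracketed difference. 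I plan to telescope: write $\prod(\zeta_k - w_k) - \prod \zeta_k$ as a sum of $n$ successive one-variable differences, each of which picks up a factor of $w_k$. Combined with the lower bounds $|\zeta_k|, |\zeta_k - w_k| \gtrsim r\, d_k(0)$ coming from the polydisc geometry, this should yield
\[ \left|\prod_{k=1}^n \zeta_k^{-1} - \prod_{k=1}^n(\zeta_k - w_k)^{-1}\right| \leq \frac{C_R}{\delta(0)^{(n+1)/2}}\sum_{k=1}^n \frac{|w_k|}{d_k(0)}. \]

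The next step is to absorb the anisotropic sum into $d(0,w)$. If $r_0 := d(0, w)$, then $w \in Y(0, \tanh(2r_0))$ and hence, by the upper polydisc containment in \eqref{polydisccontainment}, $|w_1| \lesssim A r_0\, \delta(0)$ and $|w_k| \lesssim B r_0\, \delta(0)^{1/2}$ for $k \geq 2$. Dividing by $d_k(0)$ and summing gives $\sum_k |w_k|/d_k(0) \lesssim r_0 = d(0,w)$, and the Jacobian factor $\delta(0)^{(n+1)/2}$ from the kernel estimate exactly cancels the volume of the polydisc torus. The conclusion is
\[ |f(0) - f(w)| \leq C_R\, d(0,w)\sup_{\zeta \in P(0, 3r\delta(0), 3r\delta(0)^{1/2})} |f(\zeta)|, \]
and since $3r \leq a(R/2)$ and $3r \leq b(R/2)$, this polydisc is contained in $Y(0, R/2)$. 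The final step is to convert the sup norm on $Y(0,R/2)$ to the $L^p$ average on $Y(0,R)$ by an application of Lemma \ref{lemma:kob-mean}, which yields exactly the claimed bound.

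The main obstacle I anticipate is the careful bookkeeping of the anisotropic scaling: the polydisc associated to $Y(z, s)$ has two distinct scales $\delta(z)$ and $\delta(z)^{1/2}$, and one must telescope the product-difference in a way that pairs each $w_k$ with the correct scale $d_k(z)$. Everything else — ensuring $s$ is small enough that the relevant polydiscs are nested, and ensuring uniformity in $s$ in the constant $C_R$ — is a matter of choosing $r$ depending only on $R$ and the fixed constants $A, B, a(R/2), b(R/2)$, which is guaranteed by the hypothesis on $s$.
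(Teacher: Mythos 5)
Your proposal reproduces the paper's proof essentially verbatim: the same reduction to $z=0$ via unitary invariance, the same choice of $r = \tfrac13\min\{\tfrac12, a(R/2), b(R/2)\}$, the same Cauchy integral on the anisotropic polydisc torus with a telescoping estimate of the kernel difference, the same use of the upper polydisc containment to convert $\sum_k |w_k|/d_k(0)$ into $d(0,w)$, and the same final appeal to Lemma~\ref{lemma:kob-mean}. The argument is correct and no genuinely different route is taken.
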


Let $\chi_{s}(w,z)= \begin{cases} 1 & d(w,z)<s \\ 0 & d(w,z)\geq s \end{cases}$. We now can prove the following lemma:

\begin{lemma}\label{reversecarlesonlemma}
Let $1 \le p< \infty$, $\alpha>-1$, $s>0$ with $\tanh^{-1}(s) \leq \frac{1}{3AB} \min\{\frac{1}{2},a(1/4),b(1/4)\}$, and $\mu$ be a measure satisfying $\|\mu\|_{\mathcal{C}}<\infty.$  Then there exists a constant $C$, independent of $s$, so that there holds for all $f \in A^p_\alpha(\Omega)$:

\begin{equation}\int_{\Omega} \int_{\Omega} \frac{\chi_s(w,z)}{|Y(z,s)|} |f(w)-f(z)|^p |\rho(w)|^\alpha \,dA(w) \,d\mu(z) \leq C s^p \|\mu\|_{\mathcal{C}} \int_{\Omega} |f|^p |\rho|^\alpha \,dA. \end{equation} \label{lemmaequation}

\begin{proof} We estimate, using Propositions \ref{comparablesize} and \ref{differencequotient} with $R=1/2$:

\begin{align*}
\int_{\Omega} \frac{\chi_s(w,z)}{|Y(z,s)|}|f(w)&-f(z)|^p |\rho(w)|^\alpha \,dA(w)  \\
&\leq \sup_{w \in Y(z,s)}d(z,w)^p \int_{\Omega} \frac{\chi_s(w,z)}{|Y(z,s)|}\left(\frac{|f(w)-f(z|)}{d(z,w)}\right)^p |\rho(w)|^\alpha \,dA(w) \\
& \leq  Cs^{p} \int_{\Omega} \frac{\chi_s(w,z)}{|Y(z,s)|}\frac{1}{|Y(z,1/2)|}\int_{Y(z,1/2)} |f(\zeta)|^p |\rho(\zeta)|^\alpha \,dA(\zeta)\,dA(w)\\
& =  C s^{p} \frac{1}{|Y(z,1/2)|}\int_{Y(z,1/2)} |f(\zeta)|^p |\rho(\zeta)|^\alpha \,dA(\zeta).
\end{align*}

Then integrate with respect to the measure $\mu$ in the variable $z$ on both sides and apply Fubini's theorem:

\begin{align*}
& \int_{\Omega} \int_{\Omega} \frac{\chi_s(w,z)}{|Y(z,s)|} |f(w)-f(z)|^p |\rho(w)|^\alpha \,dA(w) \,d\mu(z)\\
 & \leq C s^{p} \int_{\Omega} |f(\zeta)|^p |\rho(\zeta)|^\alpha \left( \int_{Y(\zeta,1/2)} \frac{1}{|Y(z,1/2)|} \, d\mu(z)\right) \,dA(\zeta).
\end{align*}

Note that for $z \in Y(\zeta, 1/2)$, Proposition \ref{comparablesize} implies that 
$$
\frac{1}{|Y(z,1/2)|} \leq \frac{1}{c_1|Y(\zeta,1/2)|}.
$$

Thus, the last integral is dominated by

$$ \frac{C}{c_1} s^{p} \int_{\Omega} |f(\zeta)|^p |\rho(\zeta)|^\alpha \left( \int_{Y(\zeta,1/2)} \frac{1}{|Y(\zeta,1/2)|} \, d\mu(z)\right) \,dA(\zeta) \leq C s^{p} \|\mu\|_{\mathcal{C}} \int_{\Omega} |f|^p |\rho|^\alpha \,dA.$$

\end{proof} 
\end{lemma}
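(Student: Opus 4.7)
The plan is to reduce the iterated integral on the left-hand side to a weighted $L^p$ norm of $f$ by using the pointwise estimate of Proposition \ref{differencequotient} to trade the difference $|f(w)-f(z)|$ for $s$ times a local average of $f$, and then switching the order of integration via Fubini.

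First, since $\chi_s(w,z)$ is supported where $d(z,w) < \tanh^{-1}(s)$ and the hypothesis on $s$ places us in the regime where Proposition \ref{differencequotient} applies with $R = 1/2$, I obtain the pointwise estimate
\begin{equation*}
|f(w)-f(z)|^p \le C\, d(z,w)^p\, \langle f\rangle_{p,Y(z,1/2)}^p \le C s^p\, \frac{1}{|Y(z,1/2)|}\int_{Y(z,1/2)} |f(\zeta)|^p\, dA(\zeta).
\end{equation*}
On the support of $\chi_s(w,z)$, Proposition \ref{comparablesize} gives $|\rho(w)| \sim |\rho(z)|$, so $|\rho(w)|^\alpha$ can be pulled out of the $w$-integral; the same proposition gives $|\rho(\zeta)| \sim |\rho(z)|$ for $\zeta \in Y(z,1/2)$, which lets me reinsert the weight inside the $\zeta$-integral up to a constant. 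Performing the $w$-integration using $\int_\Omega \chi_s(w,z)\,dA(w) = |Y(z,s)|$, the inner integral bounds
\begin{equation*}
\int_\Omega \frac{\chi_s(w,z)}{|Y(z,s)|}|f(w)-f(z)|^p |\rho(w)|^\alpha\, dA(w) \le C s^p\, \frac{1}{|Y(z,1/2)|}\int_{Y(z,1/2)} |f(\zeta)|^p |\rho(\zeta)|^\alpha\, dA(\zeta).
\end{equation*}

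Next I integrate against $d\mu(z)$ and apply Fubini using the symmetric relation $\zeta \in Y(z,1/2) \Leftrightarrow z \in Y(\zeta,1/2)$, yielding
\begin{equation*}
C s^p \int_\Omega |f(\zeta)|^p |\rho(\zeta)|^\alpha \left( \int_{Y(\zeta,1/2)} \frac{d\mu(z)}{|Y(z,1/2)|} \right) dA(\zeta).
\end{equation*}
One further application of Proposition \ref{comparablesize} replaces $|Y(z,1/2)|$ by $|Y(\zeta,1/2)|$ up to a constant, reducing the inner $z$-integral to $\mu(Y(\zeta,1/2))/|Y(\zeta,1/2)| \le \|\mu\|_{\mathcal{C}}$, which gives the claimed inequality.

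The main bookkeeping point is to verify that all implicit constants are independent of $s$, since the target inequality carries the factor $s^p$ cleanly. This is the case because Propositions \ref{differencequotient} and \ref{comparablesize} both give constants depending only on the fixed choice $R = 1/2$ and on $\Omega$, while the $s^p$ factor arises solely from $\sup_{w \in Y(z,s)} d(z,w)^p \le (\tanh^{-1}s)^p \lesssim s^p$. Beyond this routine verification, I do not anticipate a serious obstacle; the argument is a standard Fubini-plus-Lipschitz reduction once the difference-quotient estimate is in hand.
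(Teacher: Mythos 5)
Your proof is correct and takes essentially the same route as the paper: apply Proposition \ref{differencequotient} with $R=1/2$ to convert the difference quotient into a local $L^p$ average over $Y(z,1/2)$, pick up $s^p$ from $\sup_{w\in Y(z,s)} d(z,w)^p$, integrate against $d\mu$ and Fubini, then use Proposition \ref{comparablesize} to replace $|Y(z,1/2)|$ by $|Y(\zeta,1/2)|$ and conclude via $\|\mu\|_{\mathcal{C}}$. The one thing you make more explicit than the paper is the comparability of the weights $|\rho(w)|\sim|\rho(z)|\sim|\rho(\zeta)|$ needed to pass between the unweighted average coming from Proposition \ref{differencequotient} and the $|\rho|^\alpha$-weighted integrals; the paper leaves this step tacit, so your bookkeeping is a genuine (if minor) clarification rather than a different argument.
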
 

\begin{proof}[Proof of Theorem \ref{reversecarleson}]
We follow the general strategy of Luecking in \cite{luecking1985forward}. Choose $s$ as in Lemma \ref{reversecarlesonlemma} (we will see another constraint on $s$ at the end of the proof). Raise both sides of \eqref{lemmaequation} with measure $\mu_\alpha$ to the $1/p$ power and use the reverse triangle inequality to obtain
\begin{align*}
& \left(\int_{\Omega}\int_{\Omega} \frac{\chi_s(w,z)}{|Y(z,s)|} |f(w)|^p |\rho(w)|^\alpha \,dA(w) \,d\mu_\alpha(z)\right)^{1/p}- \left(\int_{\Omega}\int_{\Omega} \frac{\chi_s(w,z)}{|Y(z,s)|} |f(z)|^p |\rho(w)|^\alpha \,dA(w) \,d\mu_\alpha(z)\right)^{1/p}\\
& \leq \left( C s^{p} \|\mu_\alpha\|_{\mathcal{C}} \int_{\Omega} |f|^p |\rho|^\alpha \,dA\right)^{1/p}.
\end{align*}

Notice that the subtracted term is dominated by, after integration in $w$ and applying Proposition \ref{comparablesize}, $C_1^\alpha \left(\int_{\Omega} |f|^p \, d\mu \right)^{1/p}.$ On the other hand, for the first term, apply Fubini's theorem and apply the following estimate using Lemma \ref{comparablesize}:
\begin{align*}
\int_{\Omega} \frac{\chi_s(w,z)}{|Y(z,s)|} \,d\mu_\alpha(z) & = \int_{Y(w,s)} \frac{1}{|Y(z,s)|} \, d\mu_\alpha(z)\\
& \geq \frac{1}{C_1} \int_{Y(w,s)} \frac{1}{|Y(w,s)|}\, d\mu_\alpha(z)\\
& = \frac{1}{C_1}  \frac{\mu_\alpha(Y(w,s))}{|Y(w,s)|}\\
& \geq \frac{1}{C_1}  \varepsilon \|\mu_\alpha\|_{\mathcal{C}} \chi_{G}(w).
\end{align*} 

Inserting this estimate, we get that the first integral is bounded below by 

\begin{align*}
\frac{1}{C_1^{1/p}}  \|\mu_\alpha\|_{\mathcal{C}}^{1/p} \varepsilon^{1/p} \left(\int_{G} |f|^p |\rho|^\alpha \,dA \right)^{1/p} \geq C' \gamma^{q}  \|\mu_\alpha\|_{\mathcal{C}}^{1/p} \varepsilon^{1/p}\left(\int_{\Omega} |f|^p |\rho|^\alpha \,dA \right)^{1/p},
\end{align*}
applying Theorem \ref{thm:main} and the subsequent comments. Therefore, we get the inequality:
\begin{align*}
& C' \gamma^{q} \|\mu_\alpha\|_{\mathcal{C}}^{1/p} \varepsilon^{1/p}\left(\int_{\Omega} |f|^p |\rho|^\alpha \,dA \right)^{1/p}- C_1^\alpha\left(\int_{\Omega} |f|^p \, d\mu\right)^{1/p}\\
& \leq  C^{1/p} s \|\mu_\alpha\|_{\mathcal{C}}^{1/p}\left( \int_{\Omega} |f|^p |\rho|^\alpha \,dA\right)^{1/p}
\end{align*}

Now, as long as $s$ is chosen so $C^{1/p} s\leq  \frac{C'}{2} \gamma^{q} \varepsilon^{1/p}$, we obtain 

$$\left(\int_{\Omega} |f|^p |\rho|^\alpha \,dA \right)^{1/p} \leq \frac{C_1^\alpha}{\|\mu_\alpha\|_{\mathcal{C}}^{1/p}(C'\gamma^{q} \varepsilon^{1/p}-C^{1/p}s)}\left(\int_{\Omega} |f|^p \, d\mu\right)^{1/p},$$

which establishes the result.

\end{proof}

We now apply Theorem \ref{reversecarleson} to prove the point sampling result, Theorem \ref{thm:sampling}.

\begin{proof}[Proof of Theorem \ref{thm:sampling}]
Again, we follow ideas from \cite{luecking1985forward}. Appealing to Lemma \ref{comparablesize}, we can define the finite absolute constants

$$C_3:= \frac{C_2}{c_2} \left(\sup_{0<R<\frac{1}{4}} \frac{R}{\tanh\left(\frac{\tanh^{-1}(R)}{5}\right)}\right)^{2n},$$

$$C_4:= \frac{C_2}{c_2}  \left[2\tanh(\tanh^{-1}(1/4)+\tanh^{-1}(1/2))\right]^{2n}.$$

Choose $R<1/4$ as in the proof of Theorem \ref{reversecarleson} with respect to parameters $\varepsilon=\frac{c_1}{C_3 C_4}, \gamma,$ and $r_0.$ Let $r= \tanh(\frac{\tanh^{-1}(R)}{5})$ and suppose that $G=\bigcup_{j=1}^{\infty} Y(a_j,r)$ is relatively dense with respect to the parameters $\gamma, r_0.$  Using a standard covering lemma, we can select a subsequence $b_k=a_{j_k}$ so that $Y(b_k,r)$ are pairwise disjoint and $\bigcup_{j=1}^{\infty} Y(a_j,r) \subset \bigcup_{k=1}^{\infty} Y(b_k,R).$ It is immediate that $G' = \bigcup_{k=1}^{\infty} Y(b_k,R)$ is relatively dense. 

Let $\mu:=\sum_{k=1}^{\infty} |Y(b_k,R)| |\rho(b_k)|^\alpha \delta_{b_k}.$ We now claim that $\mu_\alpha= \sum_{k=1}^{\infty} |Y(b_k,R)| \delta_{b_k}$ is Carleson. To see this, note that if  $z \in \Omega$ and $b_k \in Y(z,1/2)$, then $Y(b_k,r) \subset Y(z,\tanh(\tanh^{-1}r+ \tanh^{-1}(1/2))).$ Then we have, using the pairwise disjointness of the $Y(b_k,r)$ and Lemma \ref{comparablesize}:

\begin{align*}
\frac{\mu_\alpha(Y(z,1/2))}{|Y(z,1/2)|} & = \sum_{b_k \in Y(z,1/2)}\frac{|Y(b_k,R)|}{|Y(z,1/2)|} \\
& \leq C_3\sum_{b_k \in Y(z,1/2)}\frac{|Y(b_k,r)|}{|Y(z,1/2)|}\\
& \leq C_3\frac{|Y(z,\tanh(\tanh^{-1}r+ \tanh^{-1}(1/2)))|}{|Y(z,1/2)|}\\
& \leq C_3 C_4.
\end{align*}
In particular $\|\mu_\alpha\|_{\mathcal{C}} \leq C_3 C_4.$
Finally, we claim that 
$$G'= \{z: \bar {\mu_\alpha}_R(z) \geq \varepsilon \|\mu_\alpha\|_{\mathcal{C}}\},$$
which according to Theorem \ref{reversecarleson} implies that $\mu$ is reverse Carleson and gives the result.

To establish the claim, note that

\begin{align*}
\frac{\mu_\alpha(Y(z,R))}{|Y(z,R)|} & =  \sum_{b_k \in Y(z,R)} \frac{|Y(b_k,R)|}{|Y(z,R)|}.
\end{align*}

If $z \in G'$, then $z \in Y(b_{k_0},R)$ for at least one $k_0$ and hence the average of $\mu_\alpha$ on $Y(z,R)$ is bounded below by $c_1$ by Proposition \ref{comparablesize}. 

Conversely, if $z \notin G'$, then $b_k \notin Y(z,R)$ for any $k$ and hence the average of $\mu_\alpha$ on $Y(z,R)$ is 0. This proves the claim and the theorem. 
\end{proof}

\bibliographystyle{abbrv}
{\small\bibliography{references.bib}}

\affil{Department of Mathematics, Washington University in Saint Louis\\ \indent 1 Brookings Drive, Saint Louis, MO 63130, USA}
\email{awgreen@wustl.edu, nathanawagner@wustl.edu}
 
\end{document}